\documentclass[10pt]{article}
\usepackage{amsmath}
\usepackage[T1]{fontenc}
\usepackage[utf8]{inputenc}
\usepackage{caption}
\usepackage{subcaption}
\usepackage{amsfonts}
\usepackage{amssymb}
\usepackage{xcolor}
\usepackage{float}
\usepackage{bbm}
\usepackage{wrapfig}
\usepackage[english]{babel}
\usepackage{graphicx}
\usepackage{amsthm}
\usepackage{hyperref}
\usepackage{titling}
\usepackage{accents}
\usepackage [autostyle, english = american]{csquotes}
\MakeOuterQuote{"}
\numberwithin{equation}{section}
\theoremstyle{plain}
\newtheorem*{theorem*}{Theorem}
\newtheorem*{lemma*}{Lemma}
\newtheorem{theorem}{Theorem}

\newtheorem{lemma}{Lemma}[section]
\newtheorem{corollary}[lemma]{Corollary}

\newtheorem{conj}[lemma]{Conjecture}

\newtheorem{proposition}[lemma]{Proposition}

\theoremstyle{definition}
\newtheorem{definition}[lemma]{Definition}
\newtheorem{remark}[lemma]{Remark}

\newcommand{\be}{\begin{equation}}
\newcommand{\ee}{\end{equation}}
\newcommand{\ba}{\begin{array}{l}}
\newcommand{\ea}{\end{array}}
\newcommand{\Rr}{{\mathbb R}}

\newcommand{\prt}[1]{\left( #1 \right)}
\newcommand{\crch}[1]{\left[ #1 \right]}
\newcommand{\norm}[1]{\left\Vert #1 \right\Vert}
\newcommand{\abs}[1]{\left| #1 \right|}

\renewcommand{\div}{{\mbox{div}\,}}

\newcommand{\sgn}{{\mbox{sgn}\,}}

\def\Xint#1{\mathchoice
{\XXint\displaystyle\textstyle{#1}}%
{\XXint\textstyle\scriptstyle{#1}}%
{\XXint\scriptstyle\scriptscriptstyle{#1}}%
{\XXint\scriptscriptstyle\scriptscriptstyle{#1}}%
\!\int}
\def\XXint#1#2#3{{\setbox0=\hbox{$#1{#2#3}{\int}$ }
\vcenter{\hbox{$#2#3$ }}\kern-.6\wd0}}

\def\dashint{\Xint-}

\definecolor{darkred}{rgb}{0.6,0.1,0.1}
\definecolor{darkgreen}{rgb}{0.1,0.6,0.1}
\definecolor{darkblue}{rgb}{0.1,0.1,0.6}
\numberwithin{equation}{section}


\usepackage{geometry}

\title{On the long-time behavior of scale-invariant solutions to the 2d Euler equation and applications}
\author{Tarek M. Elgindi, Ryan W. Murray, Ayman R. Said}

\begin{document}

\maketitle

\begin{abstract}
    We study the long-time behavior of scale-invariant solutions of the 2d Euler equation satisfying a discrete symmetry. We show that all scale-invariant solutions with bounded variation on $\mathbb{S}^1$ relax to states that are piece-wise constant with \emph{finitely} many jumps.
     All continuous scale-invariant solutions become singular and homogenize in infinite time. On $\mathbb{R}^2$, this corresponds to generic infinite-time spiral and cusp formation.  The main tool in our analysis is the discovery of a monotone quantity that measures the number of particles that are moving away from the origin.
    This monotonicity also applies locally to solutions of the 2d Euler equation that are $m$-fold symmetric ($m\geq 4$) and have radial limits at the point of symmetry. 
     
     Our results are also applicable to the Euler equation on a large class  of surfaces of revolution (like $\mathbb{S}^2$ and $\mathbb{T}^2$). Our analysis then gives \emph{generic} spiraling of trajectories and infinite-time loss of regularity for globally smooth solutions on any such smooth surface, under a discrete symmetry.
\end{abstract}

\section{Introduction}
We are concerned with 2d inviscid flows; namely, solutions to the incompressible Euler equation:
\begin{equation}
    \label{2dEuler1} \partial_t \omega + u\cdot\nabla \omega=0,
\end{equation}
\begin{equation}\label{2dEuler2}
    u=\nabla^\perp \Delta^{-1}\omega.
\end{equation}
Here, the scalar vorticity $\omega:\mathbb{R}^2\times \mathbb{R}\rightarrow\mathbb{R}$ is transported by the velocity field $u:\mathbb{R}^2\times\mathbb{R}\rightarrow\mathbb{R}^2$ which is uniquely determined at each time $t\in\mathbb{R}$ from $\omega$ using the Newtonian potential:
\begin{equation}\label{Newtonian}u(x,t)=\frac{1}{2\pi}\int_{\mathbb{R}^2}\frac{(x-y)^\perp}{|x-y|^2}\omega(y,t)dy.\end{equation} We adopt the standard notation $v^\perp=(-v_2, v_1)$ for $v=(v_1,v_2)\in\mathbb{R}^2.$ It is well known that smooth enough solutions to the 2d Euler equation \eqref{2dEuler1}-\eqref{2dEuler2} retain their smoothness for all finite times. Much less is known in the infinite-time limit. Except in very special (but important) cases, very little is known about the long-time behavior of solutions in the large. In fact, since the Euler equation is fundamentally a (non-linear and non-local) transport equation, there is a strong possibility that despite the plethora of possible initial states, most solutions "relax" in infinite time to simpler states. This has been established in perturbative regimes in the ground breaking work of Bedrossian and Masmoudi \cite{BedrossianMasmoudi} and later extensions by Ionescu and Jia \cite{IonescuJia1,IonescuJia2} and Masmoudi and Zhao \cite{MasmoudiZhao}. As for the generic long-time behavior of solutions, there are two natural conjectures (see \cite{Sverakcours} and \cite{Shnirelman} respectively and also the review articles \cite{ED-review,Shnirelman-turvey}) regarding the long time behavior of solutions to the 2d Euler equation, which we state on $\mathbb{T}^2$ for convenience:
\begin{conj}
As $t\rightarrow\pm\infty,$ generic solutions experience loss of compactness.
\end{conj}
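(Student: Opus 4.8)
Since the stated conjecture is one of the central open problems for the 2d Euler equation, what follows is a strategy rather than a complete argument; the tools developed in this paper are meant to establish the relevant mechanism in a restricted but robust setting. The first task is to fix what ``loss of compactness'' should mean. Because the Euler flow conserves every $L^p$ norm and all the Casimirs $\int_{\mathbb{T}^2} f(\omega)$, the orbit $\{\omega(\cdot,t)\}_{t\in\mathbb{R}}$ is automatically bounded in $L^\infty$ and hence weakly-$*$ precompact; the conjecture is therefore the assertion that for generic data this orbit fails to be precompact in the \emph{strong} $L^2(\mathbb{T}^2)$ topology. Equivalently, one wants to show that generic solutions generate ever-finer scales, so that $\omega(\cdot,t)$ converges only weakly and strictly loses $L^2$ mass to high frequencies as $t\to\pm\infty$. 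The plan is thus to reduce the conjecture to exhibiting a \emph{stable} small-scale generation mechanism that survives on a large set of initial data.

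The mechanism I would use is exactly the one isolated in this paper. Near a point of $m$-fold symmetry ($m\ge 4$) with a radial limit, the solution is modeled to leading order by a scale-invariant profile on $\mathbb{S}^1$, and the monotone quantity counting particles moving away from the origin yields, via a LaSalle-type invariance argument, convergence to a piecewise-constant state in the BV case and infinite-time homogenization (spiral and cusp formation) in the continuous case. Each such scenario is a local loss of strong compactness: the spiraling winds the level sets into tighter and tighter filaments, forcing $\nabla\omega$ to grow and the local $L^2$ profile to develop an essentially singular frequency content. The next step is to \emph{globalize}: embed this local model into genuine solutions on $\mathbb{T}^2$ (or on a surface of revolution, using the reduction quoted in the abstract), controlling the nonlocal far-field contribution to the velocity so that the local monotone structure persists under the true dynamics.

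The final and hardest step is \emph{genericity}. I would first show that initial data possessing a point of symmetry with a radial limit, together with the requisite discrete symmetry, are dense in the relevant space, and then argue that the loss-of-compactness conclusion is stable: the monotone quantity is strictly decreasing, and strict monotonicity is an open condition, so the set of data exhibiting infinite-time small-scale generation should contain an open dense, hence residual, set. Making this precise requires a topology in which the 2d Euler solution map is continuous enough to run a Baire-category argument while being fine enough to detect filamentation---a genuine tension, since the transport nature of the equation makes the flow map badly behaved in strong topologies.

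The principal obstacle is precisely this globalization-plus-genericity step. The monotonicity is proved for exactly scale-invariant (locally self-similar) solutions, whereas generic solutions are not self-similar; propagating the local monotone quantity to a full nonlinear, nonlocal solution on $\mathbb{T}^2$---and showing that the far-field velocity cannot destroy the inward/outward particle bookkeeping over infinite time---is where the real work lies. For this reason I expect the honest deliverable to be loss of compactness on an open dense class of symmetric data, which is strong evidence for the conjecture, rather than a proof of the full statement, which remains open.
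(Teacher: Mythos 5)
The statement you were asked to prove is Conjecture 1 of the paper, which is stated as an \emph{open problem} (attributed to \v{S}ver\'ak) and is not proved anywhere in the paper; you correctly recognized this and offered a strategy rather than a proof. Your strategy --- local reduction to the scale-invariant model near a symmetry point, the monotone quantity tracking expanding versus contracting particles, and a Baire-category argument for genericity within symmetry --- is essentially the same combination of ingredients the paper actually uses for its partial results in this direction (Theorem \ref{thm:expanding-sets}, Corollary \ref{Full Euler Entropy}, and Theorems \ref{R2Theorem} and \ref{S2Theorem}), the only slip being that the paper's monotone quantity $S(t)$ is strictly \emph{increasing}, not decreasing.
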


\begin{conj}
The (weak) limit set of generic solutions consists only of solutions lying on compact orbits. 
\end{conj}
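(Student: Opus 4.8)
The plan is to recast the statement as an invariance-principle problem in the weak-$*$ topology, which is the natural topology for the limit set. Two structural features of \eqref{2dEuler1}--\eqref{2dEuler2} drive the argument: the kinetic energy $E(\omega)=\tfrac12\int_{\mathbb{T}^2}|u|^2$ is conserved along every solution and, crucially in two dimensions, is \emph{weakly continuous} (since $\omega\mapsto u$ is smoothing, bounded weak-$L^2$ sequences converge strongly in $H^{-1}$); while each convex Casimir $C_f(\omega)=\int_{\mathbb{T}^2} f(\omega)$ is conserved along the flow but only weakly \emph{lower} semicontinuous. Consequently, any weak limit $\omega_\infty$ of a sequence $\omega(t_n)$ with $t_n\to+\infty$ satisfies $E(\omega_\infty)=E(\omega_0)$ and $C_f(\omega_\infty)\le C_f(\omega_0)$, so the passage to the limit set can only dissipate Casimirs, never create them. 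First I would package this into a relaxation functional on phase space and show it is non-increasing along the flow and constant on the weak limit set.

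The key step is a LaSalle-type argument built around the discovery of this paper. The monotone quantity counting particles that move away from the origin — together with its local version valid for $m$-fold symmetric solutions with radial limits — furnishes exactly the kind of genuinely dissipated quantity that the classical conservation laws do not provide. I would argue that on the weak limit set the Casimirs are frozen at their infimal values, and that freezing the Casimirs together with stationarity of the monotone particle count forces the mixing mechanism to shut off; a solution whose mixing has shut off should be a steady state or a relative equilibrium, and hence lie on a compact (recurrent) orbit. The genericity would then be extracted by a Baire-category argument: the set of initial data whose forward orbit fails to saturate the relaxation functional should be meager in the natural phase space, so that for a residual set of data the weak limit set coincides with the saturated, compact-orbit locus.

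The hard part, and the reason the statement remains a conjecture rather than a theorem, is twofold. First, the 2d Euler equation possesses \emph{no} known globally defined, strictly decreasing Lyapunov functional: enstrophy and every other Casimir are exactly conserved along each individual trajectory, and their apparent loss is visible only in the discontinuous jump to a weak limit, so the invariance principle must be built directly on the non-metrizable, non-compact weak-$*$ limit set rather than on honest trajectories in a compact metric space. Second — and this is the true obstacle — proving that a state at which the monotone quantity is stationary actually lies on a compact orbit requires excluding "eternal cascades," slow indefinite filamentation that never settles; this is precisely the long-time dynamical behavior that is least understood. The present paper resolves both difficulties only after the scale-invariant reduction, where the particle count becomes a bona fide Lyapunov function on a much smaller quotient and the saturated states are explicit; promoting that mechanism from the self-similar class to arbitrary solutions is the missing ingredient needed to establish the conjecture in the generality stated here.
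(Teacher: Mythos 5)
The statement you were asked about is not a theorem of this paper at all: it is one of the two open conjectures (attributed to Shnirelman and \v{S}ver\'ak) that the paper quotes as motivation, and the paper explicitly says that no results in the literature prove either phenomenon in a large-data setting. So there is no proof in the paper to compare yours against, and your text — candidly — is not a proof either: it is a program, and you yourself name the two steps that nobody knows how to do. That self-assessment is accurate, but it means the proposal has genuine, unfixable-as-written gaps rather than a different route to the same result.

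Concretely, the gaps are these. First, the LaSalle-type functional you want does not exist in the generality you need: the paper's monotone quantity $S(t)$ (Theorem \ref{thm:expanding-sets}, Corollaries \ref{entropy 0-hom Euler} and \ref{Full Euler Entropy}) is defined only for $m$-fold symmetric solutions ($m\geq 4$) admitting radial limits at the point of symmetry, and it measures behavior of the trace $g$ on $\mathbb{S}^1$ at that single point; it is not a functional on the full phase space of generic solutions on $\mathbb{T}^2$, and nothing in the paper suggests how to globalize it outside symmetry (the paper itself flags this as an open question in the remark after Corollary \ref{entropy 0-hom Euler}). Second, even granting such a functional, the chain ``Casimirs frozen $+$ particle count stationary $\Rightarrow$ mixing shuts off $\Rightarrow$ steady state or relative equilibrium $\Rightarrow$ compact orbit'' consists of three implications, none of which is established anywhere: excluding eternal filamentation cascades is precisely the content of the conjecture, so invoking it is circular. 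Third, the genericity step is unsupported: the Baire-category arguments that do appear in the paper (Theorems \ref{R2Theorem} and \ref{S2Theorem}) prove generic \emph{growth} in $C^1$ within symmetry, using Koch's perturbation lemma and the spiraling mechanism — they say nothing about the weak limit set consisting of compact orbits, and there is no topology specified in which your ``saturated'' set could be shown residual. What the paper actually proves is a rigorous instance of the conjectured picture inside the scale-invariant class (relaxation to piecewise-constant states with finitely many jumps, Theorems \ref{thm:conv-trav-wave} and \ref{0-hom Euler Cauchy thm}); your proposal correctly identifies that promoting this mechanism beyond that class is the missing ingredient, which is exactly why the statement remains a conjecture.
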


\noindent These two conjectures together state that most solutions should, on the one hand, "relax" in infinite time in that they should lose $L^2$ mass. On the other hand, these limiting states are conjectured to have compact orbits; i.e. they must be very special, such as steady states, time-periodic solutions, etc. While there appear to be no results in the literature proving either of these phenomena in large data settings, there are a few results on generic small scale creation. Of note is the result of Koch \cite{Koch} in which strong growth of H\"older and Sobolev norms of the vorticity is established near any background solution (stationary or time-dependent) for which the gradient of the flow map is unbounded in time. Yudovich also established (boundary induced) growth results under some mild assumption on the data near the boundary of the domain \cite{Yudovich} (see also \cite{MSY} for an extension of \cite{Yudovich}). There are also numerous important results on growth of solutions in the neighborhood of stable steady states \cite{Nadirashvili,Denisov, KiselevSverak, Zlatos, ED-review}.

The purpose of this work is to demonstrate one setting where generic relaxation and growth can be established rigorously and in full generality and, in particular, away from equilibrium. We do this in the setting of scale-invariant solutions, introduced in \cite{EJ-symmetries}, which are only assumed to have bounded vorticity. We find that, buried in the dynamics of these Euler solutions, there is a powerful relaxation mechanism that induces both an arrow of time and a major contraction of phase space.  Our main Theorems can be stated informally as
\begin{theorem}
Consider the 2d Euler equation on $\mathbb{R}^2.$
\begin{itemize}
\item The set of $C^1$ and $m$-fold symmetric initial data whose corresponding solution is unbounded in $C^1$ is dense, within symmetry. Moreover, particle trajectories generically form spirals in infinite time.
\item Scale-invariant solutions relax in infinite time, by virtue of a monotone quantity, to states with finitely many jump discontinuities. 
\end{itemize}
\end{theorem}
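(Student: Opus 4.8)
The plan is to reduce both statements to the one–dimensional active–scalar dynamics on $\mathbb{S}^1$ induced by scale invariance, and then to build everything on a single monotone quantity. Writing $\omega(r,\theta,t)=g(\theta,t)$ for a $0$--homogeneous vorticity, the stream function is $\psi=r^2\Psi(\theta,t)$ and $\Delta\psi=\omega$ becomes $\Psi''+4\Psi=g$. From $u=\nabla^\perp\psi$ one reads off the angular speed $\dot\theta=2\Psi$ and the radial rate $\tfrac{d}{dt}\log r=-\Psi'$, so the vorticity solves the closed transport equation
\[
\partial_t g+2\Psi\,\partial_\theta g=0,\qquad \Psi=(\partial_\theta^2+4)^{-1}g.
\]
Under $m$--fold symmetry with $m\ge 4$ the operator $\partial_\theta^2+4$ is invertible, indeed negative definite, on the symmetric subspace, since it annihilates only the frequency--$2$ modes, which the symmetry excludes; this is what makes $\Psi$ well defined and is the reason for the symmetry hypothesis. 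Two structural facts will be used repeatedly: the flow is transport by a circle homeomorphism $\phi_t$ (the velocity is log--Lipschitz), so $g(t)=g_0\circ\phi_t^{-1}$ and in particular $\|g(t)\|_\infty=\|g_0\|_\infty$; and the fixed points of the angular flow are exactly the zeros of $\Psi$, attracting where $\Psi'<0$ and repelling where $\Psi'>0$. Since $\dot r=-r\Psi'$, a particle moves \emph{outward} precisely where $\Psi'<0$.

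The heart of the argument is to show that the quantity measuring the outflow, morally
\[
Q(t)=\bigl|\{\theta\in\mathbb{S}^1:\Psi'(\theta,t)<0\}\bigr|,
\]
equivalently (up to bookkeeping) the number of maximal arcs on which particles escape, is monotone nonincreasing along the flow. I would prove this by differentiating in time, tracking the motion of the critical points of $\Psi$ (the endpoints of the outflow arcs) via $\dot\theta_\ast=-\Psi'_t/\Psi''$ with $\Psi_t=(\partial_\theta^2+4)^{-1}(-2\Psi\,\partial_\theta g)$, and showing the net displacement of the outflow set has a definite sign. \textbf{The main obstacle is precisely here}: the production term $-2\Psi\,\partial_\theta g$ is nonlocal after inverting $\partial_\theta^2+4$, so the sign is invisible termwise and must be extracted from a Sturm/nodal--type comparison for the sign changes of $\Psi'$, combined with the sign--definiteness of $(\partial_\theta^2+4)^{-1}$ on the symmetric class. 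I expect this step to consume most of the work, and it is where the hypothesis $m\ge 4$ is genuinely used.

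Granting monotonicity, $Q$ converges as $t\to\infty$. Because $\|g(t)\|_\infty$ is bounded and $(\partial_\theta^2+4)^{-1}$ is smoothing, the orbit $\{\Psi(t)\}$ is precompact in $C^1$, so along subsequences $\Psi(t_k)\to\bar\Psi$ and $g(t_k)\rightharpoonup\bar g$ with $\bar\Psi=(\partial_\theta^2+4)^{-1}\bar g$. A LaSalle--type argument converts the vanishing of $\dot Q$ in the limit into $\bar\Psi\,\bar g_\theta=0$, i.e.\ $\bar g$ is constant on every arc between consecutive zeros of $\bar\Psi$; such $\bar g$ is a steady state, and since $\bar\Psi''+4\bar\Psi=\bar g$ is piecewise constant, $\bar\Psi$ is piecewise trigonometric with finitely many zeros, so $\bar g$ is piecewise constant with finitely many jumps. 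For $g_0\in BV$ this is the claimed relaxation; the role of $BV$ over mere $L^\infty$ is to make the outflow count finite at all times, so that the limiting count $Q_\infty$ bounds the number of surviving jumps. For continuous $g_0$ the same limit is piecewise constant, which is incompatible with continuity unless the profile steepens without bound: the solution cannot converge in $C^0$, and instead $\|\partial_\theta g\|_\infty\to\infty$ while $g$ homogenizes, i.e.\ loss of regularity in infinite time.

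For a genuinely two--dimensional $m$--fold symmetric $C^1$ solution with a radial limit at the symmetry point, the vorticity has a well--defined angular profile at the origin governed to leading order by the same reduced system, so the local version of the monotone quantity applies near the origin. Density within symmetry follows by perturbing the angular profile into one for which $\bar\Psi$ has a zero with $\bar\Psi'\neq0$: at such an attracting zero the angle converges while $\log r$ grows at the linear rate $-\bar\Psi'>0$, producing logarithmic spirals and an exponentially growing flow--map gradient, which by transport of $\nabla\omega$ (or via the cited growth criterion of Koch) forces $\|\omega\|_{C^1}\to\infty$. Genericity of spiraling is then the genericity of a nondegenerate zero of $\bar\Psi$, an open dense condition on the limiting profile. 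The technical care needed here is to control the coupling between the exact Euler dynamics and its scale--invariant leading order near the origin uniformly in time, but the driving mechanism — attracting zeros of $\Psi$ that simultaneously eject particles radially — is identical to the scale--invariant case.
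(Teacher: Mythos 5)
Your reduction to the system $\partial_t g+2\Psi\partial_\theta g=0$, $\Psi=(\partial_\theta^2+4)^{-1}g$ and your instinct that the monotone quantity measures the radially outgoing set are both on target, but the step you yourself flag as ``the main obstacle'' is exactly the step the paper resolves, and your proposed route around it (tracking critical points of $\Psi$ and a Sturm/nodal comparison for the Eulerian set $\{\Psi'<0\}$) is not the mechanism and is left entirely unexecuted. The paper's observation is that the monotonicity is \emph{Lagrangian} and follows from a pointwise Riccati equation: writing $F=\partial_\theta \Psi\circ\chi$ along the characteristics $\dot\chi=2\Psi\circ\chi$, one computes $\frac{d}{dt}F=F^2-\hat c$ with $\hat c=\frac{12}{\partial_{\theta\theta}+4}\big((\partial_\theta \Psi)^2\big)\circ\chi$, and the whole point of the hypothesis $m\geq 4$ is that the kernel of $(\partial_{\theta\theta}+4)^{-1}$ restricted to $m$-fold symmetric functions equals $C_m|\sin(m\theta/2)|+\tilde C_m$ with $C_m>0$ and $\tilde C_m\geq 0$, so that $\hat c>0$ pointwise. (Your claim that $(\partial_\theta^2+4)^{-1}$ is ``negative definite'' on the symmetric subspace is false -- it has eigenvalue $1/4$ on constants and $1/(4-k^2m^2)<0$ on higher modes -- and sign-definiteness as an operator is not the relevant property; positivity of the kernel is.) Given $\hat c>0$, once $F\leq 0$ it stays negative, so the Lagrangian set $C_t=\{\theta: F(t,\theta)<0\}$ is \emph{increasing}; note this is the opposite direction from your claimed nonincreasing $Q$, and your Eulerian measure $|\{\Psi'(\cdot,t)<0\}|=\int_{C_t}\partial_\theta\chi\,d\theta$ is not obviously monotone at all since $\partial_\theta\chi\to 0$ on $C$. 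Two further gaps in the relaxation part: your LaSalle argument produces a limit that is constant between zeros of $\bar\Psi$ but gives no control on the \emph{number} of such arcs, and the finiteness of the expanding set $E$ is not a consequence of a $BV$ hypothesis -- in the paper it is proved for general $L^\infty_m$ data by a separate quantitative argument (a uniform lower bound $\partial_\theta G(t_j,\chi(t_j,\theta_n))\geq\delta$ on $E$ along a subsequence, played against the Lipschitz bound $|\partial_{\theta\theta}G|\leq C\norm{g_0}_{L^\infty}$ and the fact that $C$ has full measure, yielding $N\leq C\norm{g_0}_{L^\infty}/\delta$).

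For the first bullet your mechanism cannot work as stated: a $C^1$ vorticity has a \emph{constant} radial limit $\omega_0(0)$ at the origin, so the induced scale-invariant profile there is $g_0\equiv\omega_0(0)$, $\bar\Psi\equiv\omega_0(0)/4$ is a nonzero constant with no zeros (nondegenerate or otherwise), and there is no radial ejection from hyperbolic points of the angular flow. The nontrivial scale-invariant dynamics you invoke requires a vorticity that is discontinuous at the origin, which is incompatible with $C^1$. The paper's argument is different and more elementary: by $m$-fold symmetry $u(x,t)=\frac{\omega_0(0)}{2}x^\perp+O(|x|^2)$ near the origin, so trajectories near $0$ wind at angular speed $\approx\omega_0(0)/2\neq 0$, while trajectories near spatial infinity wind at speed tending to $0$ because $u$ is uniformly bounded; if $\nabla\Phi$ were uniformly bounded, the image of a fixed annular sector would have to connect these two regimes, forcing two curves of fixed length to come within distance $O(1/t)$ of each other, a contradiction. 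Infinite growth of $\nabla\Phi$ is then converted into density of $C^1$-unbounded data via Koch's perturbation lemma and the Baire category theorem applied to the open sets $B_N=\{\omega_0:\sup_{t\geq0}\norm{S_t\omega_0}_{C^1}>N\}$. You would need to replace your ``attracting zero of $\bar\Psi$'' scenario with this origin-versus-infinity differential winding for the $C^1$ statement to go through.
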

\noindent The first statement also holds on $\mathbb{S}^2$ and requires $m\geq 3$, while the second requires $m\geq 4$. To the best of our knowledge, these are the only available large data results on generic solutions to the 2d Euler equation (even within symmetry). 

To properly frame the discussion, we begin by recalling the definition and properties of scale-invariant solutions.
\subsection{Scale-Invariant Solutions}
Solutions to the 2d Euler equation enjoy a two-parameter family of scaling symmetries. Indeed, once $\omega$ solves \eqref{2dEuler1}-\eqref{2dEuler2}, we have that $\omega_{\lambda,\mu}$ defined by 
\[\omega_{\lambda,\mu}(t,x)=\frac{1}{\mu}\omega(\mu t,\lambda x)\] also solves the Euler equation whenever $\mu,\lambda\in (0,\infty).$ It is natural to consider solutions that are scale-invariant:
\begin{definition}
$\omega:\mathbb{R}^2\times\mathbb{R}\rightarrow\mathbb{R}$ is said to be \emph{scale-invariant} if $\omega(\cdot,\lambda x)=\omega(\cdot,x)$ for all $\lambda\in (0,\infty)$ and $x\in\mathbb{R}^2.$
\end{definition}
An unfortunate fact about non-trivial scale-invariant solutions is that they cannot decay at spatial infinity. This makes it challenging to make sense of \eqref{Newtonian}. A key observation from \cite{EJ-symmetries} was that we could give a rigorous meaning to \eqref{Newtonian} when the vorticity $\omega$ satisfies a discrete symmetry. Indeed, in the most extreme case, when $\omega$ is radially symmetric, the formula \eqref{Newtonian} becomes completely local and \emph{does not depend in any way} on the behavior of $\omega$ as $|x|\rightarrow\infty.$ This is also true in an asymptotic sense when the vorticity satisfies a discrete rotational symmetry and it is a fortunate fact that discrete rotational symmetries are propagated by the Euler equation.  Indeed, if $\omega(x,t)$ solves the 2d Euler equation then whenever $\mathcal{O}\in SO(2)$, $\omega_{\mathcal{O}}$ defined by 
\[\omega_{\mathcal{O}}(t,x)=\omega(t,\mathcal{O}x),\] is also a solution. We now give the precise definition of discrete symmetry:
\begin{definition} For $m\in\mathbb{N}$ a function
$\omega:\mathbb{R}^2\rightarrow\mathbb{R}$ is said to be $m$-fold symmetric if $\omega(\mathcal{O}_{m}x)=\omega(x),$ for all $x\in\mathbb{R}^2,$ where $\mathcal{O}_m\in SO(2)$ is the matrix corresponding to a counterclockwise rotation by angle $\frac{2\pi}{m}.$
\end{definition}
\noindent If $X$ is a space of functions on $\mathbb{R}^2$, we will denote by $X_m$ the space of functions in $X$ that are $m$-fold symmetric. 
We will now state the main theorem of \cite{EJ-symmetries}.
\begin{theorem*}[Main theorem of \cite{EJ-symmetries}]\label{EJSITheorem}
Assume that $\omega_0\in L^\infty_m(\mathbb{R}^2)$ for some $m\geq 3.$ Then, there is a unique weak solution $\omega\in C_{w_*}(\mathbb{R}; L^\infty_m(\mathbb{R}^2))$ to \eqref{2dEuler1}-\eqref{2dEuler2} with $\omega|_{t=0}=\omega_0.$
\end{theorem*}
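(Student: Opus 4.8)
The plan is to run a Yudovich-type existence and uniqueness argument, with the twist that the Biot--Savart law must be reinterpreted so that it converges despite $\omega$ not decaying at infinity. The starting observation is that, for $m$-fold symmetric $\omega$, one may symmetrize the Newtonian kernel over the cyclic group generated by $\mathcal{O}_m$ without changing the value of \eqref{Newtonian}. Writing the velocity in complex form $u_1 + iu_2 = \frac{i}{2\pi}\int_{\mathbb{R}^2}\frac{\omega(w)}{\bar z - \bar w}\,dA(w)$ and replacing the kernel by its group average $\frac{1}{m}\sum_{k=0}^{m-1}(\bar z - e^{-2\pi i k/m}\bar w)^{-1}$, a Taylor expansion for $|w|\gg|z|$ shows that all terms of order $|w|^{-1},\dots,|w|^{-(m-1)}$ cancel, leaving a symmetrized kernel that decays like $|z|^{m-1}|w|^{-m}$. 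Since $\int_{|w|>R}|w|^{-m}\,dA(w)<\infty$ precisely when $m>2$, this is exactly where the hypothesis $m\geq 3$ enters: it renders the far-field integral absolutely convergent and hence produces a well-defined velocity field $u=S[\omega]$ (which, one checks, grows at most linearly and agrees with \eqref{Newtonian} whenever the latter converges).

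The first substantive step is to establish the regularity of the operator $S$. I would prove that $S$ maps $L^\infty_m$ into the space of locally log-Lipschitz, divergence-free, $m$-fold symmetric vector fields, with the quantitative bound $|u(x)-u(y)|\le C\,\|\omega\|_{L^\infty}\,|x-y|\,\bigl(1+\log^+\tfrac{1}{|x-y|}\bigr)$ on each ball $B_R$, the constant depending on $R$ and $m$. The near-field contribution is handled by the classical Yudovich estimate for compactly supported bounded vorticity, while the far-field contribution is controlled using the decay $|z|^{m-1}|w|^{-m}$ of the symmetrized kernel together with its derivative bounds; equivariance of the symmetrized kernel gives the $m$-fold symmetry of $S[\omega]$.

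Granting log-Lipschitz regularity, the remainder is standard transport theory in the Yudovich framework. Because $u(\cdot,t)$ is locally log-Lipschitz, the flow ODE $\dot X_t(a)=u(X_t(a),t)$ has a unique, measure-preserving, bi-continuous solution by Osgood's criterion, and transporting the data along the flow, $\omega(\cdot,t)=\omega_0\circ X_t^{-1}$, yields a weak solution lying in $C_{w_*}(\mathbb{R};L^\infty_m)$, the symmetry being propagated since both $\omega_0$ and the flow are equivariant. For existence I would first mollify $\omega_0$ within the symmetry class, obtain smooth solutions, and pass to the limit using the uniform $L^\infty$ bound and the equicontinuity furnished by the log-Lipschitz velocity estimate. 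Uniqueness follows from the Yudovich energy argument: for two solutions with the same data one estimates the growth of the flow-map difference (or of a suitable $L^2$ quantity) and closes the loop via the Osgood inequality, whose borderline integrability matches the log-Lipschitz modulus exactly.

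The main obstacle I anticipate is not the transport machinery, which is classical, but the careful analysis of the symmetrized Biot--Savart operator in the absence of decay: one must quantify the far-field cancellation uniformly enough to extract the log-Lipschitz modulus on every ball, and verify that these estimates are stable both under the mollification used for existence and under the differencing used for uniqueness, so that no mass can "escape to infinity" in a way that spoils the velocity bound. It is precisely the $m\geq 3$ cancellation that makes this uniform control available.
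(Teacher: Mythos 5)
This statement is quoted verbatim from \cite{EJ-symmetries} and the present paper offers no proof of it, so the only meaningful comparison is with the cited source. Your proposal follows essentially the same route as that reference: the symmetrized Biot--Savart kernel with far-field decay $|x|^{m-1}|y|^{-m}$ (which is exactly the estimate the present paper later quotes as Corollary 2.15 of \cite{EJ-symmetries} in the proof of Lemma \ref{lem:boundness of BS}), integrable at infinity precisely when $m\geq 3$, followed by the standard log-Lipschitz/Osgood/Yudovich machinery for existence and uniqueness.
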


Here the notation $C_{w_*}(\mathbb{R}; L_m^\infty(\mathbb{R}^2))$ denotes that the solution is continuous in time with values in $L^\infty_m$ with the weak-star topology. A direct corollary of the existence and uniqueness in Theorem \ref{EJSITheorem} is the existence and uniqueness of scale-invariant solutions from $m$-fold symmetric scale-invariant data. 
\begin{corollary}
If $\omega_0\in L^\infty_m$ is scale-invariant, then the corresponding unique solution to the 2d Euler equation is scale-invariant for all $t\in\mathbb{R}.$
\end{corollary}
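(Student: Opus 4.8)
The plan is to deduce scale invariance from the \emph{uniqueness} half of the Main Theorem of \cite{EJ-symmetries}, combined with the spatial-dilation symmetry of the equation. Concretely, let $\omega\in C_{w_*}(\mathbb{R};L^\infty_m(\mathbb{R}^2))$ denote the unique solution with $\omega|_{t=0}=\omega_0$, and for a fixed $\lambda\in(0,\infty)$ set $\omega^\lambda(t,x):=\omega(t,\lambda x)$. The goal is to show that $\omega^\lambda=\omega$ for every $\lambda$; since $\lambda$ is arbitrary, this is exactly the assertion that $\omega(t,\cdot)$ is scale-invariant for all $t$.

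First I would check that $\omega^\lambda$ is an admissible competitor, i.e. that it is again a solution lying in $C_{w_*}(\mathbb{R};L^\infty_m)$. Membership in the class is immediate: $\norm{\omega^\lambda(t)}_{L^\infty}=\norm{\omega(t)}_{L^\infty}$, the map $t\mapsto\omega^\lambda(t)$ inherits weak-$*$ continuity from $\omega$, and $m$-fold symmetry is preserved because dilations commute with rotations, $\omega^\lambda(\mathcal{O}_m x)=\omega(t,\mathcal{O}_m(\lambda x))=\omega(t,\lambda x)=\omega^\lambda(x)$. That $\omega^\lambda$ solves the equation is precisely the scaling symmetry recorded above, taken in the purely spatial case $\mu=1$, under which $\omega^\lambda=\omega_{\lambda,1}$ and the associated velocity is $u[\omega^\lambda](t,x)=\tfrac1\lambda\,u[\omega](t,\lambda x)$. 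Finally, the initial data match: by scale-invariance of $\omega_0$ one has $\omega^\lambda|_{t=0}(x)=\omega_0(\lambda x)=\omega_0(x)$. Uniqueness then forces $\omega^\lambda\equiv\omega$, completing the argument.

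The only genuine obstacle is justifying the scaling symmetry \emph{at the level of the renormalized weak solutions} used in \cite{EJ-symmetries}, rather than for classically defined solutions. Because $m$-fold symmetric $L^\infty$ vorticities do not decay at infinity, the velocity is not given by the (divergent) integral \eqref{Newtonian} but by the renormalized Biot--Savart operator constructed there; the substance of the verification is therefore that this renormalized operator is scale-covariant, i.e. that it continues to satisfy $u[\omega(\cdot,\lambda\,\cdot)](x)=\tfrac1\lambda\,u[\omega](\lambda x)$ after renormalization. I would confirm this directly from the definition of the operator in \cite{EJ-symmetries}: the change of variables $z=\lambda y$ acts on the convergent (angularly averaged) part of the kernel exactly as in the formal computation, while the subtracted counterterm, being determined by the same angular averages, transforms consistently and leaves the identity intact. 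Once this scale-covariance of the velocity is in hand, $\omega^\lambda$ satisfies the same weak formulation as $\omega$ with the same data, and the uniqueness statement closes the proof.
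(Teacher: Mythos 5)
Your argument is correct and is exactly the route the paper intends: the paper derives this as "a direct corollary of the existence and uniqueness" in the main theorem of \cite{EJ-symmetries}, i.e.\ the dilated field $\omega^\lambda$ is another solution in $C_{w_*}(\mathbb{R};L^\infty_m)$ with the same (scale-invariant) initial data, so uniqueness forces $\omega^\lambda\equiv\omega$. Your extra care about the scale-covariance of the renormalized Biot--Savart operator is a legitimate point that the paper leaves implicit, and your verification of it is sound.
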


\noindent Moreover, for a scale-invariant solution $\omega$ if we let $$\omega(t,r,\theta)=g(t,\theta),$$ then $g$ solves a relatively simple equation on $\mathbb{S}^1:$
\begin{equation}\label{SIEuler}\partial_t g + 2G \partial_\theta g =0,\end{equation}
\begin{equation}\label{SIBSLaw}(4+\partial_{\theta\theta})G =g,\end{equation}
where \eqref{SIBSLaw} is uniquely solvable due to the condition $m\geq 3.$

\subsection{Summary of results}
In this section we offer a summary of our results. These results are all consequences of the more complete characterization of possible asymptotic behavior given in Theorem \ref{0-hom Euler Cauchy thm}, but we state them here as separate results of independent interest.

Our first main result in the paper reveals the driving relaxation mechanism, which is due to a type of set-valued monotonicity when tracked in Lagrangian coordinates. 

\begin{theorem}\label{thm:expanding-sets}[Expansion and Contraction]
Let $g_0\in L^\infty_m(\mathbb{S}^1),$ with $m \geq 4$, and $g,G$ be a solution of \eqref{SIEuler},\eqref{SIBSLaw}. Denote by $\chi$ the Lagrangian flow-map associated to the advection equation \eqref{SIEuler} on $\mathbb{S}^1$, namely the solution to
\[\frac{d}{dt}\chi(t,\theta)=2G(t,\chi(t,\theta)),\]
\[\chi(\theta,0)=\theta.\]
Then, for all $\theta\in \mathbb{S}^1,$ there exists $T(\theta)\in [0,\infty]$ so that $\partial_\theta\chi(t,\theta)$ is increasing on $[0,T(\theta))$ and decreasing on $[T(\theta),\infty).$ In other words, the set
\[
C(t) := \{\theta :  \partial_t\partial_\theta \chi(t,\theta) < 0\}
\]
is non-decreasing in the sense of set inclusions.

\medskip

\noindent We call the set \[E=\{\theta: T(\theta)=+\infty\}
\] the \textbf{expanding set}, which is shown to be closed. The set \[C:=\{\theta: T(\theta)<\infty\}\] is called the (eventually) \textbf{contracting set}. It is open, and $\partial_\theta\chi(t,\theta)\searrow 0$ for $t\in [T(\theta), \infty)$ on $C$.

\noindent Furthermore either
\[
g \underset{t\rightarrow+\infty}{\rightharpoonup} \dashint_{\mathbb{S}^1}g_0  \ \text{ \underline{or} } \ E \text{ is finite}.
\]
\end{theorem}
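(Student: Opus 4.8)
The plan is to analyze the weak-$*$ limit points of $g(t,\cdot)$ as $t\to+\infty$ and to tie their structure to the expanding set $E$. Since $\|g(t,\cdot)\|_{L^\infty}=\|g_0\|_{L^\infty}$ is conserved, along any sequence $t_n\to\infty$ we may extract a subsequence with $g(t_n)\rightharpoonup g_\infty$ weak-$*$ in $L^\infty_m(\mathbb{S}^1)$. The decisive structural fact is that $G=(4+\partial_{\theta\theta})^{-1}g$ is obtained from $g$ by a compact (order $-2$) solution operator, so weak-$*$ convergence of $g(t_n)$ upgrades to strong convergence $G(t_n)\to G_\infty$ in $C^1(\mathbb{S}^1)$, where $(4+\partial_{\theta\theta})G_\infty=g_\infty$. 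First I would combine this with conservation of the mean, $\int_{\mathbb{S}^1}g(t,\cdot)=\int_{\mathbb{S}^1}g_0$, to record that if $g_\infty$ is constant then $g_\infty\equiv\ov{g}:=\dashint_{\mathbb{S}^1}g_0$.

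Next I would show that every subsequential limit $g_\infty$ is \emph{invariant} under the limiting autonomous flow $\dot\theta=2G_\infty(\theta)$. Because $G(t,\cdot)\to G_\infty$ in $C^1$, the flow $\chi$ is asymptotically autonomous, and passing to the limit in the transport identity $g(t,\chi(t,\theta))=g_0(\theta)$ yields $G_\infty\,\partial_\theta g_\infty=0$; hence $g_\infty$ is constant on each nodal interval of $G_\infty$. On such an interval $g_\infty\equiv c$ forces $G_\infty''+4G_\infty=c$, so $G_\infty$ is a pure sinusoid $c/4+A\cos 2\theta+B\sin 2\theta$ there, with the endpoints being zeros of $G_\infty$. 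Thus either $G_\infty$ is constant---the homogenization alternative $g_\infty=\ov{g}$---or $G_\infty$ is a genuinely nonconstant, piecewise-sinusoidal profile whose zero set controls everything.

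I would then identify the expanding set dynamically. Differentiating the flow gives $\partial_t\log\partial_\theta\chi(t,\theta)=2\,\partial_\theta G(t,\chi(t,\theta))$, so $\theta\in E$ exactly when $\partial_\theta G(s,\chi(s,\theta))\ge 0$ for all $s$; asymptotically this is governed by the sign of $\partial_\theta G_\infty$ at $\lim_{t}\chi(t,\theta)$. In the nonconstant case, a standard asymptotically-autonomous ODE argument shows that, away from the finitely many zeros of each sinusoidal piece, every trajectory converges to a \emph{stable} zero of $G_\infty$ (where $\partial_\theta G_\infty<0$), along which $\partial_\theta\chi\searrow 0$; these are precisely the points of $C$. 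The points whose trajectories converge to \emph{unstable} or marginal zeros ($\partial_\theta G_\infty\ge 0$) make up $E$, and there are only finitely many of them provided $G_\infty$ has finitely many zeros. In this way the nonconstant alternative yields a finite expanding set, matching the claim, and simultaneously shows the full (not merely subsequential) limit is the associated piecewise-constant profile with finitely many jumps.

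The \textbf{main obstacle} is to rule out the degenerate scenario in which $G_\infty$ has \emph{infinitely many} zeros---for instance accumulating zeros or a flat nodal piece---which a priori could produce an infinite $E$ together with a nonconstant limit, violating the clean dichotomy. Handling this requires two ingredients. First, quantitative control of the accumulated expansion $\int_0^t 2\,\partial_\theta G(s,\chi(s,\theta))\,ds$ near an accumulation point of $E$, to show that persistent expansion at infinitely many points is incompatible with the $C^1$ convergence $G(t)\to G_\infty$ unless $\partial_\theta G_\infty\equiv 0$, i.e. $G_\infty$ is constant and homogenization occurs. Second, promoting subsequential information to a statement about the full trajectory: here the \emph{time-monotonicity} established earlier---that $C(t)$ is non-decreasing and $\partial_\theta\chi$ is eventually monotone along every characteristic---is the essential tool, since it makes the expanding/contracting dichotomy intrinsic to each $\theta$ and prevents it from being undone along a different subsequence. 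I expect the bulk of the work, and the place where the hypothesis $m\ge 4$ is genuinely used, to lie in this exclusion of accumulating zeros and in the careful interchange of the $t\to\infty$ limit with the measure $\int_E\partial_\theta\chi(t,\theta)\,d\theta=|\chi(t,E)|$.
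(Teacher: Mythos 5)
There is a genuine gap, and it sits at the very center of the theorem. The first assertion of the statement --- that $\partial_\theta\chi(t,\theta)$ is increasing up to some $T(\theta)$ and decreasing afterwards, equivalently that $C(t)$ is non-decreasing --- is never proved in your outline; in the final paragraph you invoke ``the time-monotonicity established earlier'' as ``the essential tool,'' but that monotonicity is precisely what has to be established. The paper obtains it by deriving the equation satisfied by $\partial_\theta G$ and rewriting it along Lagrangian trajectories as a Riccati equation: setting $F=\partial_\theta G\circ\chi$ one finds $\frac{d}{dt}F=F^2-\hat c$ together with $\frac{d}{dt}\partial_\theta\chi=2F\,\partial_\theta\chi$, where $\hat c=\frac{12}{\partial_{\theta\theta}+4}\bigl((\partial_\theta G)^2\bigr)\circ\chi$. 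The whole argument then hinges on the elliptic lemma showing that the kernel of $(4+\partial_{\theta\theta})^{-1}$ restricted to $m$-fold symmetric functions equals $C_m\left|\sin\left(\frac{m}{2}\theta\right)\right|+\tilde C_m$ with $C_m>0$ and $\tilde C_m\geq 0$ exactly when $m\geq 4$ (while $\tilde C_3<0$), so that $\hat c>0$ whenever $g$ is nonconstant. Positivity of $\hat c$ forces $F$ to remain negative once it becomes nonpositive, which is the set monotonicity; it also yields the dichotomy via $F=-\partial_t\ln y$, $y''=\hat c\,y$, $\partial_\theta\chi=y^{-2}$, convexity of $y$, and the constraint $\int_0^{2\pi}\partial_\theta\chi=2\pi$. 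You have therefore also misidentified where $m\geq 4$ is genuinely used: it enters through the sign of this kernel, not through excluding accumulating zeros of a limiting profile.

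The weak-limit strategy you propose for the final dichotomy is also not viable as stated. Passing to the limit in $g(t,\chi(t,\theta))=g_0(\theta)$ to conclude $G_\infty\,\partial_\theta g_\infty=0$ for a subsequential weak-$*$ limit requires an asymptotic-autonomy and uniqueness-of-limit statement that is essentially equivalent to the relaxation conjectures discussed in the introduction; weak-$*$ convergence does not commute with the nonlinear composition $g_0\circ\chi^{-1}$, and different subsequences could a priori produce different $g_\infty$. The paper avoids this entirely: finiteness of $E$ in the non-homogenizing case comes from a quantitative lower bound $\partial_\theta G(t_j,\chi(t_j,\theta_n))\geq\delta(c_0,\norm{g_0}_{L^\infty})$ at points of $E$ along times where $\int(\partial_\theta G)^2$ stays bounded below, played against the uniform bound $\norm{\partial_{\theta\theta}G}_{L^\infty}\leq C\norm{g_0}_{L^\infty}$ and the fact that $C_k$ exhausts a full-measure set, which caps the cardinality of $E$ by $C\norm{g_0}_{L^\infty}/\delta$. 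To repair your proof you would need to first derive the Riccati structure and the kernel positivity, at which point the weak-limit machinery becomes unnecessary.
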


We single out in the following corollary which demonstrates one way to summarize the set monotonicity using a monotone function.
\begin{corollary}\label{entropy 0-hom Euler} 
Let $g_0\in L_m^\infty(\mathbb{S}^1),$ with $m \geq 4$. If $g_0$ is not identically constant, then the function  
\[
S(t)=\left|\{\theta :  \partial_t\partial_\theta \chi(t,\theta) \leq 0\}\right|_{\theta}
\]
is strictly increasing and approaches $2\pi$, where $\left|\cdot \right|_{\theta}$ is the Lebesgue measure in $\theta$.
\end{corollary}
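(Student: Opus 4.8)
The plan is to transcribe the corollary into the transition-time picture furnished by Theorem~\ref{thm:expanding-sets}. Differentiating the flow identity $\frac{d}{dt}\chi = 2G(t,\chi)$ in $\theta$ gives the linear ODE
\[
\partial_t\partial_\theta\chi(t,\theta)=2\,\partial_\theta G\big(t,\chi(t,\theta)\big)\,\partial_\theta\chi(t,\theta),
\]
and since $\chi(t,\cdot)$ is an orientation-preserving diffeomorphism of $\mathbb{S}^1$ we have $\partial_\theta\chi>0$ for all times, so the sign of $\partial_t\partial_\theta\chi$ agrees with the sign of $\partial_\theta G$ read along the trajectory. Feeding in the theorem's statement that $t\mapsto\partial_\theta\chi(t,\theta)$ rises on $[0,T(\theta))$ and falls on $[T(\theta),\infty)$, I would show that, up to a Lebesgue-null set in $\theta$,
\[
\{\theta:\partial_t\partial_\theta\chi(t,\theta)\le 0\}=\{\theta:T(\theta)\le t\},
\]
the only care being that the exceptional set $\{\partial_\theta G(t,\chi)=0\}$ met during the rising phase is null for a.e.\ $t$ (here one uses $G\in C^{1,1}$ from \eqref{SIBSLaw}). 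Consequently $S(t)=\abs{\{\theta:T(\theta)\le t\}}_\theta$, from which the non-strict monotonicity is immediate by set inclusion.

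For the limit I would note that $\{\theta:T(\theta)\le t\}$ increases to $\{\theta:T(\theta)<\infty\}=C$ as $t\to\infty$, so $S(t)\nearrow\abs{C}_\theta=2\pi-\abs{E}_\theta$; thus the claim $S(t)\to2\pi$ is exactly the assertion that the expanding set is Lebesgue-null, which I would extract from the dichotomy in Theorem~\ref{thm:expanding-sets}. If $E$ is finite the conclusion is immediate. In the complementary case $g\rightharpoonup\dashint_{\mathbb{S}^1}g_0$ one must still rule out $\abs{E}_\theta>0$: on $E$ the Jacobian is non-decreasing, hence $\partial_\theta\chi\ge1$, so a positive-measure expanding set would carry a fixed, non-vanishing amount of un-contracted structure; combining the conservation $\int_{\mathbb{S}^1}\partial_\theta\chi\,d\theta=2\pi$ with the pushforward identity $g(t,\chi)=g_0$, I would argue this is incompatible with weak relaxation of $g$ to its constant mean. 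The same conservation law also forbids $\partial_\theta\chi\to0$ on all of $\mathbb{S}^1$, so $T$ is unbounded on $C$ and $S(t)<2\pi$ at every finite time, which is what makes the strict increase persist for all $t$. This relaxation step leans on the internal structure of the proof of Theorem~\ref{thm:expanding-sets}, and I regard it as the more delicate half of the limit.

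The genuine obstacle is strict monotonicity. Since $S$ is non-decreasing, a failure means $S$ is constant on some $[t_1,t_2]$, i.e.\ $\{\theta:t_1<T(\theta)\le t_2\}$ is null, so the Lagrangian partition into the already-contracting set $C_0=\{T\le t_1\}$ and its complement $E_0$ is frozen throughout $(t_1,t_2)$. By the sign identification above this locks the Eulerian sign pattern of $\partial_\theta G(t,\cdot)$ to the moving images $\chi(t,C_0)$ and $\chi(t,E_0)$, so the zero set of $\partial_\theta G(t,\cdot)$ is transported materially by the flow, and I expect to leverage this rigidity to force a relative equilibrium that can then be contradicted. The clean sub-case is $C_0=\varnothing$ (in particular the start-up regime where $S\equiv0$): then $\partial_\theta G(t,\chi)\ge0$ everywhere, and integrating after the change of variables,
\[
0=\int_{\mathbb{S}^1}\partial_\theta G(t,\alpha)\,d\alpha=\int_{\mathbb{S}^1}\partial_\theta G\big(t,\chi(t,\theta)\big)\,\partial_\theta\chi(t,\theta)\,d\theta
\]
has a non-negative integrand, forcing $\partial_\theta G\equiv0$, hence $g$ constant by \eqref{SIBSLaw}, contradicting the hypothesis on $g_0$. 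For $C_0\neq\varnothing$ the frozen partition together with $\partial_\theta\chi\searrow0$ on $C_0$ must be shown to be inconsistent with any nonconstant profile; establishing this rigidity cleanly — ruling out a sustained steady or traveling configuration on a finite time window — is, I anticipate, the crux of the whole corollary.
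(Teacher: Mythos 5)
Your reduction of the non-strict monotonicity to the sign of $\partial_\theta G$ along trajectories is fine, and your observation that $\int_{\mathbb{S}^1}\partial_\theta G\,d\theta=0$ forces $\partial_\theta G\equiv 0$ whenever $\partial_\theta G(t,\chi)\ge 0$ everywhere correctly disposes of the start-up case; but the two steps you yourself flag as delicate are genuine gaps, and in both cases the strategy you sketch would not close them. For strict monotonicity the paper's mechanism is not a rigidity or classification argument but a quantitative forcing term: setting $F=\partial_\theta G\circ\chi$ one derives the Riccati equation $\frac{d}{dt}F=F^2-\hat c$ of \eqref{eqn:riccati-euler}, where $\hat c=\frac{12}{\partial_{\theta\theta}+4}(\partial_\theta G)^2\circ\chi$ is \emph{strictly positive} as long as $g$ is not constant (Lemma \ref{elliptic estimate}; this is exactly where $m\ge 4$ enters). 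Consequently any Lagrangian point at which $F$ is zero or small positive is driven into $\{F<0\}$ within a time controlled by the local lower bound on $\hat c$, and it can never return; since $\{F(t_1,\cdot)>0\}$ is a nonempty open set whose boundary lies in $\{F(t_1,\cdot)=0\}$, hence outside $C_{t_1}$, the Lipschitz continuity of $F$ in $\theta$ produces a set of positive measure of new points entering $C_{t_2}$ for every $t_2>t_1$. Your alternative plan --- to show that a frozen sign partition forces a relative equilibrium and then to contradict its existence --- cannot work as stated, because nonconstant relative equilibria \emph{do} exist (the solidly rotating jump states of Theorem \ref{Steady states of the zero homogeneous Euler equation}) and nevertheless have strictly increasing $S$: freezing the Eulerian sign pattern of $\partial_\theta G$ does not freeze the Lagrangian set $C_t$, since particles are swept across the zero set of $\partial_\theta G$ and the Riccati forcing prevents them from crossing back.

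Second, in the weak-convergence branch your claim that a positive-measure expanding set is ``incompatible with weak relaxation'' is unsubstantiated: weak convergence of $g$ to its mean is perfectly consistent with $g(t,\cdot)$ retaining order-one oscillation on a set of fixed positive measure (this is precisely what weak convergence permits), so the conservation $\int_{\mathbb{S}^1}\partial_\theta\chi\,d\theta=2\pi$ together with $g(t,\chi)=g_0$ yields no contradiction. Indeed the paper's own treatment of Scenario 1 derives $|E|>0$ \emph{together with} weak convergence to the mean, so the implication you are hoping for runs directly against the body of the paper. This step therefore needs an actual argument (and, as the corollary is stated, exposes a tension with Scenario 1 that you should confront rather than wave at); you cannot extract $S(t)\to 2\pi$ from the dichotomy of Theorem \ref{thm:expanding-sets} alone in the way you describe.
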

\begin{remark}
It is important to emphasize that the montonicity of $S$ above and of its two-dimensional analogue (Corollary \ref{Full Euler Entropy}) \emph{does not} contradict the formal time-reversibility of the Euler equation. Indeed, reversing time requires reversing the sign of the vorticity. This means that quantities that depend on the sign of some quantity, as in the definition of $S$, could be monotone increasing or decreasing under the Euler evolution. $S$ can be viewed as a monotone quantity near a point of high symmetry in 2d. It would be interesting to investigate whether there are global monotone quantities outside of symmetry.   
\end{remark}

The following figure illustrates the evolution of the eventually contracting set $C(t)$ associated to the evolution in Figure \ref{fig:homoclinic}.
\begin{figure}[H]\label{entropy homoclinic orbit}
    \centering
    \includegraphics[scale=0.4]{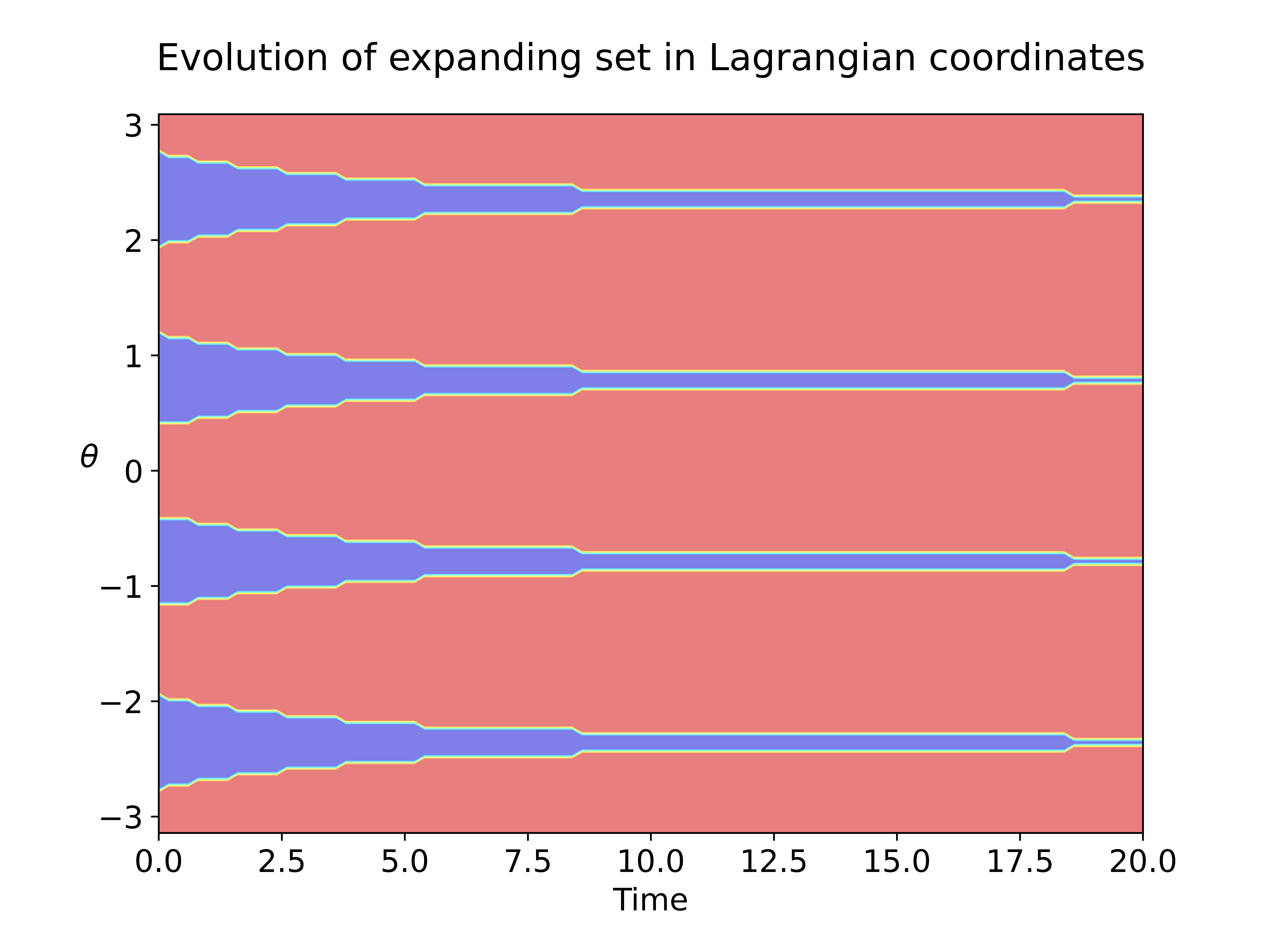}
    \caption{The expanding set associated to the evolution in Figure \ref{fig:homoclinic}}
    \label{fig:entropy homoclinic}
\end{figure}

Finally the increasing functional exhibited in Corollary \ref{entropy 0-hom Euler} generalizes to a large class of bounded  $m$-fold symmetric solutions of 2d Euler. First we recall the following slightly generalised proposition from \cite{EJ-symmetries} whose proof follows in verbatim from Proposition \ref{prop: movement of poles}.
\begin{proposition}\label{Full Euler link}
Consider $\omega_0 \in L_m^{\infty}(\mathbb{R}^2)$,  $m\geq 3$ and the associated unique global solution $\omega \in C_w\prt{\Rr,L_m^{\infty}(\mathbb{R}^2)}$ of \eqref{2dEuler1},\eqref{2dEuler2}, written in polar coordinates. Suppose that there exists $ g_0\in W^{1,\infty}_m(\mathbb{S}^1)$ such that for almost every $\theta\in[0,2\pi)$
\[
\lim_{r\rightarrow 0}\omega_0(r,\theta)=g_0(\theta).
\]
Then, there exists $g\in C_w\prt{\Rr,W^{1,\infty}_m(\mathbb{S}^1)}$ such that for all $t$ \[
\omega(t,r,\theta)\underset{r\rightarrow 0}{\longrightarrow} g(t,\theta) \text{ almost everywhere}.
\]
Moreover, $g$ solves \eqref{SIEuler} and \eqref{SIBSLaw} with initial data $g_0$.
\end{proposition}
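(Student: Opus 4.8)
The plan is to show that the radial trace of the full solution at the symmetry point is advected by the scale-invariant system \eqref{SIEuler}--\eqref{SIBSLaw}, by exploiting the fact that the Biot--Savart velocity \eqref{Newtonian} furnished by \cite{EJ-symmetries} becomes \emph{asymptotically scale-invariant} as $|x|\to 0$. I would begin by letting $(g,G)$ denote the unique solution of \eqref{SIEuler}--\eqref{SIBSLaw} with data $g_0$ (this makes sense since $m\geq 3$ forces the Fourier support of $g$ to avoid frequency $\pm 2$, so $4+\partial_{\theta\theta}$, whose kernel is $\{\cos 2\theta,\sin 2\theta\}$, is invertible on $m$-fold symmetric functions). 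The goal then becomes to prove that this $g$ is precisely the radial trace $\lim_{r\to 0}\omega(t,r,\cdot)$.

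The analytic core is a decoupling estimate for the velocity near the origin. I would compare $\omega$ with the genuine scale-invariant Euler solution $\bar\omega(t,r,\theta):=g(t,\theta)$, whose stream function is $r^2G(t,\theta)$ and whose velocity has angular and radial components $\bar u_\theta/r = 2G$ and $\bar u_r/r = -\partial_\theta G$. Writing $\eta:=\omega-\bar\omega$, which satisfies $\eta(0,r,\theta)\to 0$ as $r\to 0$, I would establish the lemma: if $\eta\in L^\infty_m$ with $m\geq 3$ and $\eta(r,\theta)\to 0$ as $r\to 0$ for a.e. $\theta$, then its induced velocity obeys $u^\eta(r,\theta)/r\to 0$ as $r\to 0$. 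This is where the symmetry is essential: splitting the Newtonian integral into the inner ball $B_r$, an intermediate annulus handled by a dyadic decomposition in the radius (which converts the pointwise smallness of $\|\eta(\rho,\cdot)\|_{L^\infty}$ for small $\rho$ into the $o(r)$ bound), and a far field controlled purely by $m$-fold symmetry (which annihilates the slowly decaying, low-frequency multipoles), one finds that $u^\eta$ carries no $O(1)$ or $O(r)$-with-fixed-coefficient contribution.

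With the velocity asymptotics in hand I would close the argument in Lagrangian coordinates. A trajectory of $\omega$ near the origin satisfies $\dot\theta = u_\theta/r$ and $\tfrac{d}{dt}\log r = u_r/r$; since the decoupling lemma shows these differ from the scale-invariant rates $2G$ and $-\partial_\theta G$ by an amount that is $o(1)$ as $r\to 0$ and bounded on compact time intervals, near-origin particles stay near the origin and, comparing with the flow of $\bar\omega$ by a Gr\"onwall estimate, the property $\eta(t,r,\theta)\to 0$ as $r\to 0$ is propagated in time. This yields both the existence of the trace for all $t$ and the identity $g(t,\cdot)=g_0\circ\chi_t^{-1}$, where $\chi_t$ is the flow of $\dot\chi=2G(t,\chi)$ on $\mathbb{S}^1$; this is exactly the Lagrangian form of \eqref{SIEuler}, with $G$ recovered from \eqref{SIBSLaw} and initial value $g_0$. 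The membership $g(t,\cdot)\in W^{1,\infty}_m$ follows since elliptic regularity for \eqref{SIBSLaw} gives $G(t,\cdot)\in W^{3,\infty}$, so $\partial_\theta G$ is Lipschitz and $\chi_t$ is bi-Lipschitz with $\partial_\theta\chi_t$ bounded above and below; hence $g_0\circ\chi_t^{-1}\in W^{1,\infty}$. The weak-$*$ continuity in time $g\in C_w(\mathbb{R},W^{1,\infty}_m)$ is inherited from that of $\omega$ together with the uniformity of the decoupling estimate on compact intervals.

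I expect the main obstacle to be the decoupling lemma, and specifically the rigorous interchange of the limit $r\to 0$ with the nonlocal integral \eqref{Newtonian}, uniformly in $t$. The delicate point is that $\eta$ vanishes only pointwise as $r\to 0$ and need not be continuous, so the intermediate-annulus term must be estimated with care, while the far field relies entirely on the cancellation forced by $m\geq 3$. Since Proposition~\ref{prop: movement of poles} proves exactly this type of velocity asymptotics within the scale-invariant class, the present statement follows by repeating that argument with the fixed scale-invariant profile there replaced by the radial trace of the full solution.
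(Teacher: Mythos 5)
Your proposal follows essentially the same route as the paper: the paper proves this proposition by pointing verbatim to the proof of Proposition \ref{prop: movement of poles}, whose core is exactly your decoupling lemma --- the $m$-fold symmetrized Biot--Savart kernel bound $K_m(x,y)\lesssim |x|^{m-1}/|y|^{m}$ for $|y|\geq 2|x|$ yields $u^{\eta}/r\to 0$ whenever the vorticity difference $\eta$ vanishes at the origin --- followed by propagating the vanishing of $\omega-\bar\omega$ through its transport equation. The only cosmetic difference is that you phrase the closing step in Lagrangian/Gr\"onwall form while the paper writes the Eulerian equation for the difference $\tilde\omega$ and invokes the velocity lemma directly.
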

\begin{corollary}\label{Full Euler Entropy}
Consider $\omega_0$ as in Proposition \ref{Full Euler link}. Set $u$ the divergence free velocity field associated to $\omega$ and $\phi$ the flow defined by 
\[
\frac{d}{dt}\phi=u(t,\phi),\ \phi(0,\cdot)=Id.
\]
Then if $g_0$ is not identically constant the following functional is a strictly increasing function  of time that goes from to $0$ to $2\pi$
\[
S(t)=\abs{\left\{\theta, \frac{d}{dt} \abs{\phi}(0,\theta) \geq 0 \right\}}_\theta.
\]
\end{corollary}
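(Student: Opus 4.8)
The plan is to reduce the statement to Corollary \ref{entropy 0-hom Euler} by showing that the set appearing here coincides, up to a null set and for every $t$, with the scale-invariant set $\{\theta:\partial_t\partial_\theta\chi(t,\theta)\le 0\}$, whose measure is already known to be strictly increasing from $0$ to $2\pi$. The quantity $\frac{d}{dt}|\phi|(0,\theta)$ should be read as the radial velocity of the Euler flow near the point of symmetry along the trajectory whose (Lagrangian) angular label is $\theta$; the content of the corollary is that the sign of this radial velocity is dictated entirely by the scale-invariant profile $G$ produced in Proposition \ref{Full Euler link}.

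First I would record the relevant velocity asymptotics near the origin. Writing $u=\nabla^\perp\psi$ with $\Delta\psi=\omega$ and using Proposition \ref{Full Euler link}, the radial limit $g(t,\theta)=\lim_{r\to0}\omega(t,r,\theta)$ solves \eqref{SIEuler}--\eqref{SIBSLaw}, so the stream function admits the expansion $\psi(t,r,\theta)=r^2G(t,\theta)+o(r^2)$ as $r\to0$, where $(4+\partial_{\theta\theta})G=g$. Computing in polar coordinates, $u_r=-\tfrac1r\partial_\theta\psi=-r\,\partial_\theta G(t,\theta)+o(r)$ and $u_\theta=\partial_r\psi=2rG(t,\theta)+o(r)$, so the angular velocity of a near-origin particle obeys $\dot\theta=u_\theta/r\to 2G(t,\theta)$, i.e. its angular motion converges to the scale-invariant flow $\chi$, while its normalized radial velocity obeys $\tfrac1r\tfrac{d}{dt}|\phi|=u_r/r\to -\partial_\theta G(t,\theta)$. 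The justification of these asymptotics is precisely the localization of the Biot--Savart law at a point of $m$-fold symmetry underlying Proposition \ref{Full Euler link}.

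Next I would turn these asymptotics into the claimed set identity. Along a trajectory with angular label $\theta$, the angular position is $\chi(t,\theta)$ in the $r\to0$ limit, hence
\[
\operatorname{sgn}\Big(\tfrac{d}{dt}|\phi|(0,\theta)\Big)=-\operatorname{sgn}\big(\partial_\theta G(t,\chi(t,\theta))\big).
\]
On the other hand, differentiating $\tfrac{d}{dt}\chi=2G(t,\chi)$ in $\theta$ gives $\partial_t\partial_\theta\chi=2\,\partial_\theta G(t,\chi)\,\partial_\theta\chi$, and since $\chi(t,\cdot)$ is an orientation-preserving diffeomorphism we have $\partial_\theta\chi>0$; therefore $\operatorname{sgn}(\partial_t\partial_\theta\chi)=\operatorname{sgn}(\partial_\theta G(t,\chi))$. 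Combining the two observations,
\[
\Big\{\theta:\tfrac{d}{dt}|\phi|(0,\theta)\ge 0\Big\}=\big\{\theta:\partial_t\partial_\theta\chi(t,\theta)\le 0\big\}
\]
up to a set of measure zero (the loci where $\partial_\theta G(t,\chi)=0$). Consequently the functional $S(t)$ here equals the functional of Corollary \ref{entropy 0-hom Euler}, which, because $g_0$ is not identically constant, is strictly increasing and tends to $2\pi$; this yields the conclusion.

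The main obstacle is the second step: rigorously upgrading the pointwise-in-$\theta$ vorticity limit of Proposition \ref{Full Euler link} to a statement about the Lagrangian radial dynamics that is uniform enough to control the \emph{measure} of the relevant set for all $t$. Concretely, one must show that the $r\to0$ limit of $u_r/r$ equals $-\partial_\theta G$ and that the angular trajectories converge to $\chi$ in a manner compatible with taking Lebesgue measure in $\theta$; this is where the localization estimates for the Biot--Savart kernel under $m$-fold symmetry (the \emph{movement of poles} argument) are essential, and where one must check that the $o(r)$ corrections cannot flip the sign of the radial velocity on a set of positive measure. Once this localization is in place, the remainder is the bookkeeping of signs above together with a direct appeal to Corollary \ref{entropy 0-hom Euler}.
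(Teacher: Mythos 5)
Your proposal is correct and follows the route the paper intends: the corollary is stated without a separate proof precisely because it is the combination of Proposition \ref{Full Euler link} (the localization of the dynamics at the origin to the scale-invariant system, via the movement-of-poles/Biot--Savart estimates) with Corollary \ref{entropy 0-hom Euler}, using the sign identity $\partial_t\partial_\theta\chi = 2\,\partial_\theta G(t,\chi)\,\partial_\theta\chi$ with $\partial_\theta\chi>0$ and $u_r\sim -r\,\partial_\theta G$ exactly as you compute. Your flagged concern about upgrading the pointwise vorticity limit to control of the Lagrangian radial velocity is the right one to raise, and it is resolved by the kernel estimates in the proof of Proposition \ref{prop: movement of poles}.
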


We now give the main relaxation theorem.
\begin{theorem}\label{thm:conv-trav-wave}
Let $g_0\in L^\infty_m(\mathbb{S}^1)$, with $m \geq 4$. Then either $g$ weakly converges to $\dashint_{\mathbb{S}^1}g_0$ in $L^p$ for all $p<+\infty$ or $E$ is finite. In the case that $E$ is finite, if moreover on all points in $E$ $g_0$ has left and right limits and those limits are distinct, then there exists an asymptotic profile $g_{+ \infty}(t,\cdot)$ which is piecewise constant with exactly $2|E|$ jumps solving \eqref{0-hom Euler Cauchy thm} such that 
\[
\abs{g(t,\theta)-g_\infty(t,\theta)}\underset{t\rightarrow+\infty}{\longrightarrow}0, \text{ for almost every }\theta.
\]
Finally if at least one point of $E$ is a continuity point of $g_0$ then the asymptotic profile $g_{+ \infty}$ is identically constant and equal to $\dashint_{\mathbb{S}^1} g_0 ,$ and the convergence occurs point-wise almost everywhere. 
\end{theorem}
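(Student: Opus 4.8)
The plan is to organize the argument around the three successive claims, using Theorem \ref{thm:expanding-sets} as the engine. The opening dichotomy is essentially a restatement of Theorem \ref{thm:expanding-sets}, so the only thing to add is the topology upgrade: since $\mathbb{S}^1$ has finite measure and $\|g(t)\|_{L^\infty}=\|g_0\|_{L^\infty}$ for all $t$, a uniformly bounded family that converges weak-$*$ in $L^\infty$ converges weakly in every $L^p$ with $p<+\infty$, and the limit must be the mean because $\dashint_{\mathbb{S}^1} g(t)=\dashint_{\mathbb{S}^1} g_0$ is conserved (integrate \eqref{SIEuler}, integrate $G\partial_\theta g$ by parts, and use $g=(4+\partial_{\theta\theta})G$ from \eqref{SIBSLaw} to write the integrand as a total derivative). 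Thus I may assume for the rest that $E$ is finite, say $E=\{\theta_1<\dots<\theta_N\}$ with $N=|E|$.

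The heart of the matter is a collapse lemma: every connected component $I$ of the open set $C=\mathbb{S}^1\setminus E$ has its interior concentrate onto a single moving point under the flow $\chi$. First I would upgrade the pointwise decay $\partial_\theta\chi(t,\theta)\searrow 0$ on $C$ to local uniform decay. Since $C(t)=\{\theta:\partial_t\partial_\theta\chi(t,\theta)<0\}$ is open (the integrand is continuous in $\theta$) and non-decreasing, it coincides with $\{\theta:T(\theta)<t\}$, so $T$ is upper semicontinuous and hence bounded by some $T_K<\infty$ on any compact $K\subset C$. For $t\geq T_K$ every $\theta\in K$ is past its turning time, so $\partial_\theta\chi(t,\theta)$ is decreasing there, and dominated (or monotone) convergence yields $\int_K \partial_\theta\chi(t,\theta)\,d\theta\to 0$. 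Consequently, for interior points $\theta<\theta'$ of $I$ the separation $|\chi(t,\theta)-\chi(t,\theta')|=\int_\theta^{\theta'}\partial_\theta\chi(t,s)\,ds\to 0$, so the bulk of $I$ collapses onto one moving point $q_I(t)$, while neighborhoods of the endpoints $\theta_i\in E$ blow up, their images necessarily retaining positive length because the total length $\int_{\mathbb{S}^1}\partial_\theta\chi(t,\theta)\,d\theta=2\pi$ is preserved.

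Next I would read off the profile through the inverse flow, writing $g(t,\alpha)=g_0(\chi^{-1}(t,\alpha))$. The collapse forces, for $\alpha$ staying at fixed distance from the moving points $q_I(t)$, the label $\chi^{-1}(t,\alpha)$ to approach a point of $E$ from a definite side; since $g_0$ has one-sided limits at each $\theta_i$, the value $g(t,\alpha)$ approaches one of the $2N$ numbers $g_0(\theta_i^{\pm})$. This defines the moving piecewise-constant profile $g_\infty(t,\cdot)$ whose jumps sit at the images $\chi(t,\theta_i)$ (where $g_0(\theta_i^-)$ meets $g_0(\theta_i^+)$) and at the collapse points $q_I(t)$; the distinctness hypothesis makes the $N$ jumps at $\chi(t,\theta_i)$ genuine, and an analogous check at the $q_I(t)$ gives the count $2N$. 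To see that $g_\infty$ is itself a solution of \eqref{SIEuler}--\eqref{SIBSLaw}, I would run an $\omega$-limit-set argument on the translated orbits $\{g(t+\cdot)\}_t$: this family is precompact (bounded in $L^\infty$, with $g\mapsto G$ compact, so the equation is stable under weak-$*$ limits), every limit point is a global solution, and the a.e. identification above shows each such limit is exactly the piecewise-constant $g_\infty$; uniqueness then forces convergence of the whole orbit and promotes the convergence in measure to the stated almost-everywhere convergence.

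For the final continuity claim, a continuity point $\theta_i\in E$ kills the jump of $g_\infty$ at $\chi(t,\theta_i)$, and I would argue this degeneracy propagates to force $g_\infty\equiv\dashint_{\mathbb{S}^1} g_0$: in the co-rotating frame of the limiting relative equilibrium the points of $E$ are precisely the sources while $C$ drains into the sinks, and a source that is not a jump is incompatible with a non-constant piecewise-constant relative equilibrium that still carries the strict monotonicity of Corollary \ref{entropy 0-hom Euler} all the way to $S\to 2\pi$; hence the profile must be constant, equal to the conserved mean, and the convergence becomes genuinely pointwise. I expect this last rigidity, together with the rigorous promotion of the soft collapse/compactness picture to a \emph{single} limiting solution with the exact jump count, to be the main obstacle: everything up to convergence in measure is fairly robust, but pinning down that the limit is an honest relative equilibrium, that exactly $2|E|$ jumps survive, and that an interior source cannot occur in the degenerate case is where the delicate work lies.
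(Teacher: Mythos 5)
Your skeleton for the first two claims tracks the paper's: the dichotomy is imported from Theorem \ref{thm:expanding-sets}, each connected component of $C$ collapses onto a single moving point (the paper's Lemma \ref{lem:prf rlx jump}, proved exactly as you suggest from pointwise decay of $\partial_\theta\chi$ plus conservation of $\int\partial_\theta\chi=2\pi$), and the limiting profile is read off from the one-sided limits of $g_0$ at the expanding points. Where you diverge is in the identification step: the paper tests $g(t,\cdot)$ against a smooth $\psi$, changes variables by $\chi$, and splits the integral into small neighborhoods of the points of $E$ (which stretch to cover everything except the collapse points), getting the $2|E|$-jump profile directly; you instead invoke an $\omega$-limit-set/compactness argument. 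That route is workable in principle but is the weaker of the two here, because the limit profile is time-dependent (its jump locations move), so "uniqueness of the limit point forces convergence of the whole orbit" needs to be formulated modulo the flow, and weak compactness alone will not give you the claimed pointwise a.e.\ convergence — that comes only from the explicit representation $g(t,\cdot)=g_0\circ\chi^{-1}(t,\cdot)$ together with the collapse, which you do mention but then do not actually use as the engine.

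The genuine gap is the final claim (a continuity point of $g_0$ in $E$ forces relaxation to the constant $\dashint_{\mathbb{S}^1}g_0$). Your argument — "a source that is not a jump is incompatible with a non-constant relative equilibrium" — is a heuristic, not a proof, and you say as much. The paper closes this by contradiction using two quantitative inputs that are absent from your proposal: (i) Lemma \ref{lem prf rlx thm:lower bound on E}, a uniform lower bound $\partial_\theta G(t_j,\chi(t_j,\theta))\geq\delta(c_0,\norm{g_0}_{L^\infty})>0$ at points of $E$ along any sequence of times where $\int(\partial_\theta G)^2$ stays bounded below, proved by a Riccati comparison ($\partial_t f=f^2-c$ with $c$ bounded below forces $f$ to go negative unless $f$ starts above a threshold), which itself relies on the strict positivity of the kernel in Lemma \ref{elliptic estimate}; and (ii) Lemma \ref{lem:prf rlx cst c0}, an explicit trigonometric computation showing that on an interval where $\partial_{\theta\theta}G+4G$ is constant and of length less than $\pi/4$, strict positivity of $\partial_\theta G$ at both endpoints propagates to the interior. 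If a point of $E$ is a continuity point, the limit profile is constant across it, so (ii) applies on the merged interval, forcing $\partial_\theta G\circ\chi>0$ on a set of interior points that belong to $C$ — contradicting the defining property of $C$ that $\partial_\theta G\circ\chi$ eventually becomes negative there. Without these two lemmas (or substitutes of comparable strength) your final paragraph does not establish the rigidity you correctly identify as the main obstacle.
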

\begin{remark}
The previous relaxation theorem thus contains the case of bounded functions that admit left and right limits at each point which is precisely the Banach space of regulated functions, $Reg$, realized as the closure of $BV$ for the $L^\infty$ norm. We have trivially $C^0(\mathbb{S}^1)\subset Reg(\mathbb{S}^1)$. The convergence on $Reg$ happens point-wise almost everywhere. We also remark that, while Proposition \ref{Full Euler link} requires $g_0\in W^{1,\infty},$ the result can be extended to the case where $g_0$ is piecewise $C^{1,\alpha}$ as was done in \cite{EJSVPI}. 
\end{remark}

{\bf Homoclinic and Heteroclinic Orbits:} Our results also reveal the existence of homoclinic and heteroclinic orbits. In Figure \ref{fig:homoclinic}, we give a plot of the homoclinic orbit connecting a constant state to itself in the limits $t\rightarrow\pm\infty.$ All continuous solutions lie on such homoclinic orbits by Theorem \ref{thm:conv-trav-wave}.
\begin{figure}[H]\label{fig:homoclinic}
    \centering
    \includegraphics[scale=0.4]{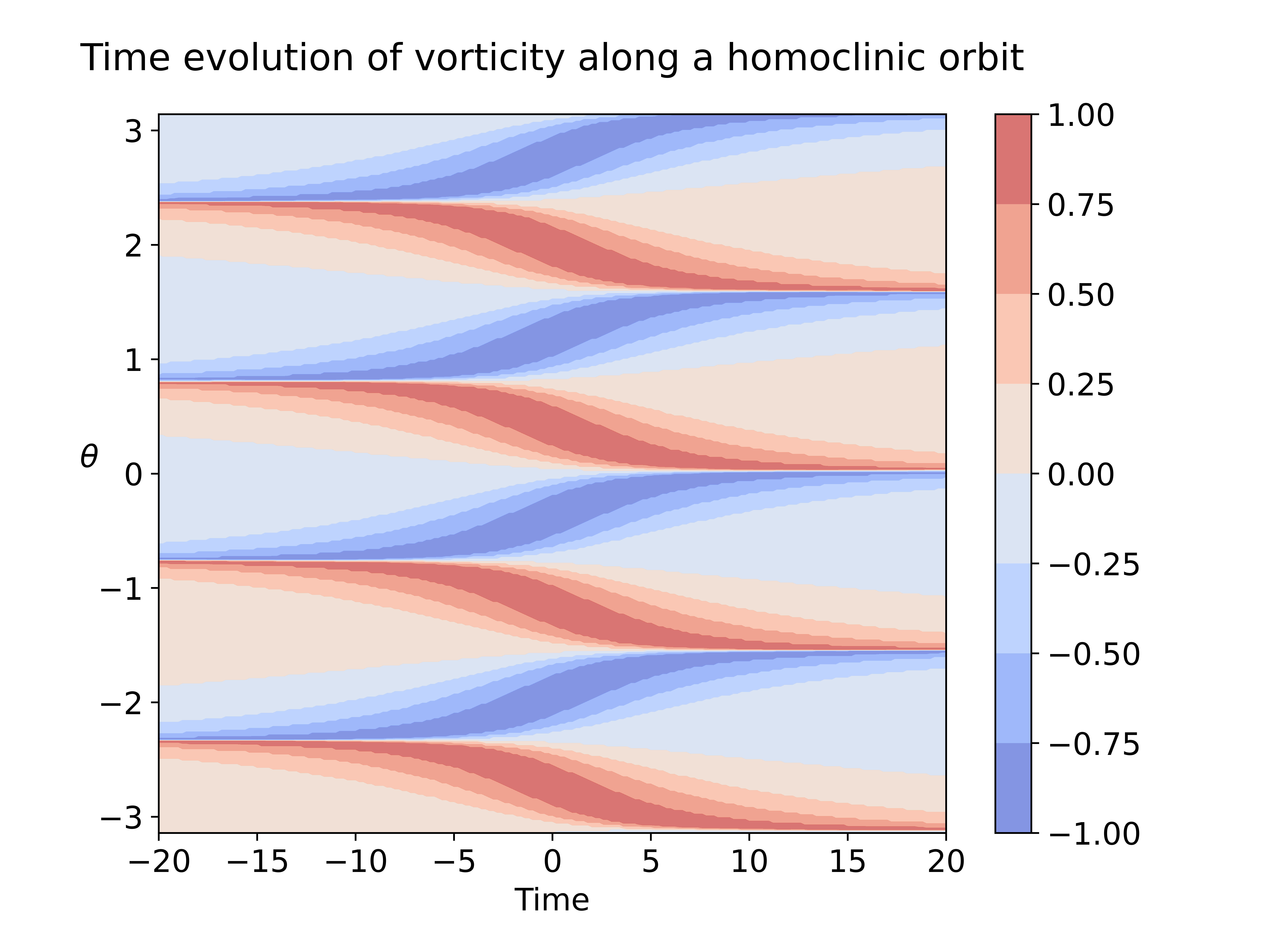}
    \caption{An orbit of the system which approaches zero at $t = \pm \infty$.} 
    \label{fig:homoclinic}
\end{figure}
For heteroclinic orbits, the end states $g_{\pm \infty}$ are completely determined by the forward and backward expanding sets $E_{\pm}$ and that
\[
E_{+\infty}\subset \{\theta,\partial_\theta G_0>0\}  \text{ and } E_{-\infty}\subset \{\theta,\partial_\theta G_0<0\}
\]
It is thus easy to construct examples showing that the map $g_0\mapsto g_{\pm \infty}$ is a non-trivial map with genuine quantifiable loss of information (cf Corollary \ref{entropy 0-hom Euler}). Figure \ref{fig:heteroclinic} illustrates an orbit of the system connecting two different states towards $t = \pm \infty$, one being a jump state with sixteen jumps and the other having eight jumps.

\begin{figure}[H]
    \centering
    \includegraphics[scale=0.4]{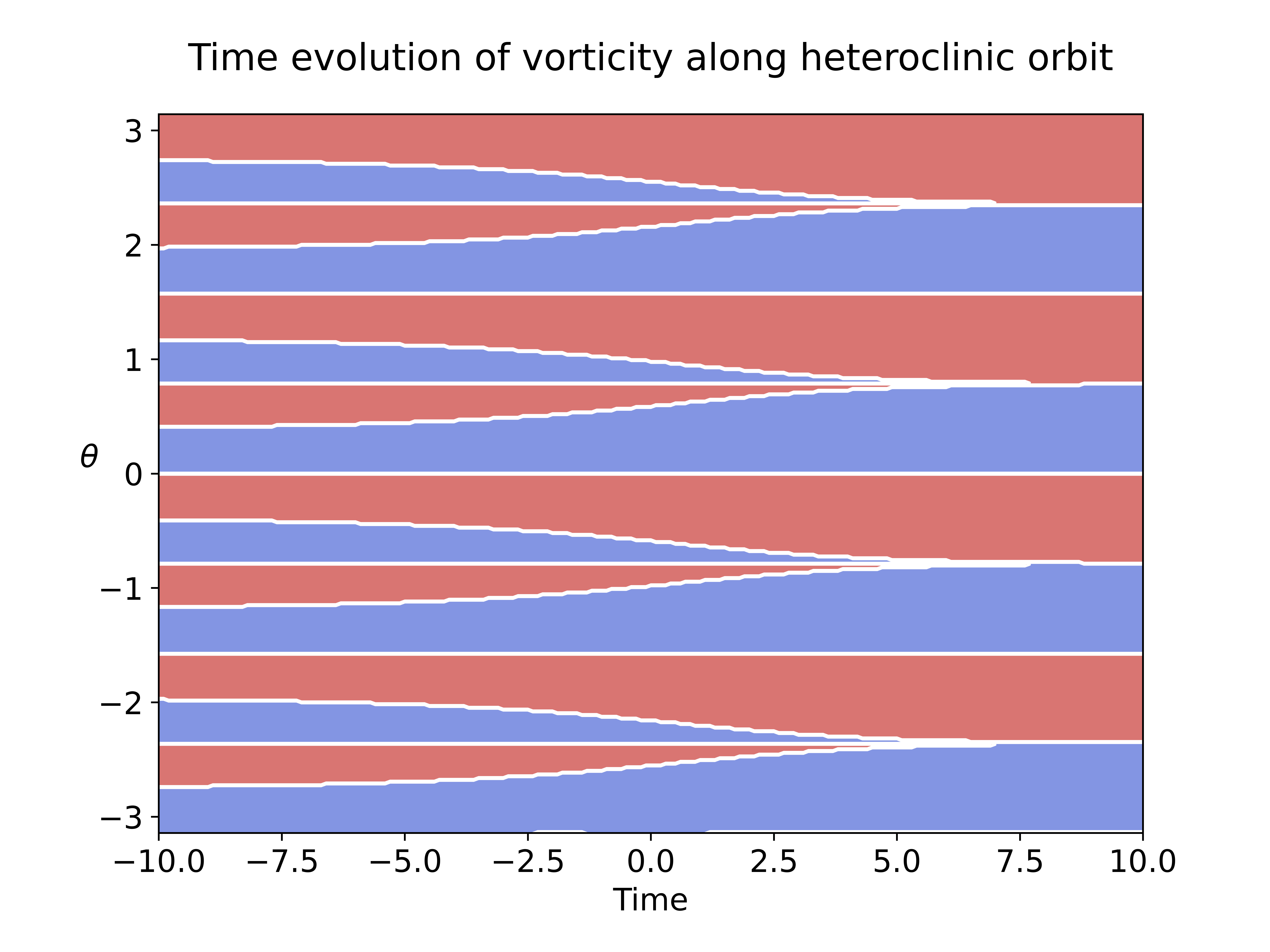}
    \caption{An orbit of the system connecting two different steady states towards $t = \pm \infty$.}
    \label{fig:heteroclinic}
\end{figure}

We also provide a complete complete classification of the possible limiting travelling wave profiles and steady states of the system \eqref{SIEuler}, \eqref{SIBSLaw}.
\begin{theorem}\label{thm:class-steady-states}
Let $g\in L^\infty_m(\mathbb{S}^1)$, with $m \geq 4$ be a steady solution to the steady state problem
\[
\begin{cases}
G\partial_\theta g=0\\
4G+\partial_{\theta \theta }G=g
\end{cases}.
\]
Then either $g=0$ or g is piecewise constant and has a finite number of jumps occurring at the global maxima and minima of $\partial_\theta G$ which have exactly opposite non zero values.
\end{theorem}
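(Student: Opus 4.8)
The plan is to analyze the steady-state system directly, exploiting the structure of the two equations. Suppose $g$ is not identically zero. The key observation is that the first equation $G\partial_\theta g = 0$ forces a sharp dichotomy at each point: either $G(\theta) = 0$ or $g$ is locally constant near $\theta$. I would first establish that the zero set $\{G = 0\}$ is the crucial object. Since $G \in W^{2,\infty}$ (indeed $C^1$) by elliptic regularity from $4G + \partial_{\theta\theta}G = g$ with $g \in L^\infty$, the function $G$ is continuous, and its zero set $Z = \{G = 0\}$ is closed. On the open complement $\mathbb{S}^1 \setminus Z$, the equation $G\partial_\theta g = 0$ forces $\partial_\theta g = 0$, so $g$ is constant on each connected component of $\mathbb{S}^1 \setminus Z$.

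The next step is to show that $g$ can only jump across points of $Z$, and more precisely to pin down where jumps occur using the second equation. First I would argue that $g$ is piecewise constant with jumps contained in $Z$. Then, to control the jumps, I would use the relation $g = 4G + \partial_{\theta\theta}G$. On each component where $g$ equals a constant $c$, $G$ solves the linear ODE $\partial_{\theta\theta}G = c - 4G$, whose solutions are of the form $G(\theta) = \frac{c}{4} + A\cos(2\theta) + B\sin(2\theta)$. I would piece these local solutions together using the $C^1$ matching of $G$ across the jump points (since $G \in C^1$ but $\partial_{\theta\theta}G$ jumps where $g$ does), and impose the $m$-fold symmetry together with the constraint $m \geq 4$ to rule out degenerate configurations and to force finiteness of the number of jumps.

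The heart of the matter is identifying that the jumps occur at the global extrema of $\partial_\theta G$ with exactly opposite nonzero values. The idea is that at a jump point $\theta_0 \in Z$, we have $G(\theta_0) = 0$, and the distributional identity $g = 4G + \partial_{\theta\theta}G$ means $\partial_\theta G$ has a kink (its derivative jumps by the jump in $g$). I would relate the size and sign of the jump in $g$ to the local behavior of $\partial_\theta G$ at $\theta_0$: since $G$ vanishes at $\theta_0$ and $\partial_\theta G$ is continuous, a jump in $g$ corresponds precisely to $\theta_0$ being a point where $\partial_\theta G$ attains a local extremum (the second-order distributional term is a Dirac contribution to $g$ only if the profile were non-smooth, but here the jump structure in $g$ forces $\partial_\theta G$ to have a corner). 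Combining the global structure of the trigonometric pieces with the requirement that the constants match up consistently around the full circle, I expect to conclude that all jumps of $g$ share a common magnitude up to sign, equivalently that $\partial_\theta G$ attains the same extremal absolute value at each jump, and these are its global max and min.

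The main obstacle will be the rigorous treatment of the low-regularity matching across the zero set $Z$, in particular ruling out the possibility that $Z$ contains a nondegenerate interval (on which $G \equiv 0$ but $g$ could be arbitrary) or a complicated limit structure such as a Cantor-type zero set. To handle the interval case, I would note that $G \equiv 0$ on an interval forces $g = 0$ there by the second equation, and then a continuation/analyticity argument on the pieces (the $G$ on adjacent intervals being explicit trigonometric functions) should propagate $g = 0$ around the circle, contradicting $g \not\equiv 0$; alternatively, the $C^1$ matching at the endpoints of such an interval over-determines the adjacent trigonometric solution and forces it to vanish identically. Establishing finiteness of the jump set — excluding accumulation of zeros of $G$ — is the other delicate point, which I expect to resolve by observing that on each constant-$g$ piece $G$ is a genuine (nonzero-amplitude) trigonometric polynomial of fixed frequency $2$, hence has at most finitely many zeros per period, so the jumps cannot accumulate.
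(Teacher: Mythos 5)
Your route is genuinely different from the paper's: you analyze the steady system statically through the zero set of $G$, whereas the paper feeds the steady state into its dynamical relaxation machinery (the Riccati equation for $\partial_\theta G\circ\chi$, the expanding/contracting sets, and uniqueness of the weak limit) to obtain piecewise constancy and finiteness, and only afterwards extracts the jump conditions. Your opening steps are fine: $G\in W^{2,\infty}$, so $Z=\{G=0\}$ is closed, $g$ is a.e.\ constant on each component of the complement, and on each such component $G=\tfrac{c}{4}+R\cos(2\theta-\phi)$. But the two substantive conclusions of the theorem are not reached by the arguments you propose.

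First, the assertion that jumps occur at the \emph{global} extrema of $\partial_\theta G$ with \emph{exactly opposite} values does not follow from ``a jump in $g$ gives $\partial_\theta G$ a corner.'' A jump of $g$ at $\theta_0$ only makes $\partial_{\theta\theta}G$ jump; it need not change sign there, so $\theta_0$ need not even be a local extremum of $\partial_\theta G$, and $C^1$ matching alone does nothing to equalize $|\partial_\theta G|$ across distinct jump points. The missing idea is the first integral obtained by multiplying $4G+\partial_{\theta\theta}G=g_i$ by $\partial_\theta G$ on a component: $\tfrac12(\partial_\theta G)^2+2G^2-g_iG$ is constant there, and since $G$ vanishes at both endpoints, $(\partial_\theta G)^2$ takes the same value at the two endpoints of every component; chaining around the circle yields one common value $v^2$ at all jump points, after which a sign argument (the paper uses the signedness of $\partial_{\theta\theta}G$ on each component, which also makes $\partial_\theta G$ monotone there and hence extremal at the endpoints) gives the alternation $\pm v$ and the globality of the extrema. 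Your plan contains no substitute for this identity. Second, your finiteness argument --- each piece is a frequency-$2$ trigonometric polynomial, hence has finitely many zeros per period --- fails as stated: the amplitude and phase change from component to component, so the zeros of $G$ are not the zeros of a single trigonometric polynomial, and a priori infinitely many short components could accumulate at a point of $Z$. This can be repaired statically (at such an accumulation point $G=\partial_\theta G=0$, and the same first integral then forces $R=|g_i|/4$ on an adjacent component, making $G$ one-signed with zeros spaced $\pi$ apart, incompatible with a component of length $<2\pi/m\le\pi/2$), but the repair again runs through the energy identity you have not introduced. I would restructure the proof around that identity.
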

{\bf Generic infinite in time blow up for smooth solutions on $\mathbb{R}^2$ and $\mathbb{S}^2$:}
Using Proposition \ref{Full Euler link} to analyze the winding of trajectories around $0$ and an important result of Koch \cite{Koch}, we establish generic $C^1$ growth
\begin{theorem}\label{R2Theorem}
Consider the class $X=C^1_m\cap L^1(\mathbb{R}^2), \ m\geq 3.$ Then, the set of all initial data in $C^1_m$ whose Euler solution diverges in $C^1$ as $t\rightarrow\infty$ is of second category (in particular, such data are dense in $X$).  
\end{theorem}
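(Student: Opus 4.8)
The plan is to prove residuality via the Baire category theorem, realizing the growth set as a countable intersection of open dense sets. I work in the complete metric space $X=C^1_m\cap L^1(\mathbb{R}^2)$ (with the sum of the $C^1$ and $L^1$ norms), and for the global $m$-fold symmetric solution $\omega$ I set
\[
\mathcal D=\Big\{\omega_0\in X:\ \limsup_{t\to\infty}\|\omega(t)\|_{C^1}=\infty\Big\}=\bigcap_{N\in\mathbb N}A_N,\qquad A_N:=\Big\{\omega_0\in X:\ \sup_{t\ge0}\|\omega(t)\|_{C^1}>N\Big\}.
\]
If every $A_N$ is open and dense, then $\mathcal D$ is a dense $G_\delta$, hence comeager, and in particular of second category and dense. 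Openness of $A_N$ is the easy half: if $\omega_0\in A_N$, choose a finite $T$ with $\|\omega(T)\|_{C^1}>N$; since the $m$-fold symmetric Euler flow (global well-posed by the main theorem of \cite{EJ-symmetries}) depends continuously on the data in $C^1$ over the compact interval $[0,T]$, every datum sufficiently $X$-close to $\omega_0$ still exceeds $N$ at time $T$. The entire content therefore lies in proving each $A_N$ \emph{dense}, which I do in two moves.

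\textbf{Move 1: spiraling backgrounds are dense.} I first show that the set $\mathcal S$ of data whose Lagrangian flow map $\phi$ satisfies $\sup_{t\ge0}\|\nabla\phi(t)\|_{L^\infty}=\infty$ is dense in $X$; this is the ``generic spiraling'' of the informal statement and is where the winding around the origin enters. Given $\omega_0$ and $\epsilon>0$, I perturb $\omega_0$ inside a small ball about the origin by a $C^1_m$ shear so that the radial profile of the angular velocity $\Omega(r,t):=u_\theta(r,\theta,t)/r$ is genuinely non-constant in $r$ on a small annulus at $t=0$; this costs less than $\epsilon$ in $X$. Because the origin is a stagnation point fixed by the $m$-fold symmetry, Proposition \ref{Full Euler link} governs the angular dynamics near $0$ and pins down the quasi-steady evolution of this profile; the relative winding $\int_0^t[\Omega(r_2,s)-\Omega(r_1,s)]\,ds$ between two radii then accumulates without bound, so the angular spread $\partial_r(\text{angle})$ grows and $\|\nabla\phi(t)\|\to\infty$, producing the spirals. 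Hence $\mathcal S$ is dense.

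\textbf{Move 2: the Koch upgrade.} I feed these backgrounds into Koch's theorem \cite{Koch}, which states that arbitrarily close to any solution whose flow map gradient is unbounded in time there exist initial data whose vorticity diverges in $C^1$ (indeed in Hölder/Sobolev norms). Applying this near each element of the dense set $\mathcal S$ yields growth data arbitrarily close in $X$, so $A_N\supset\bigcup_{t}\{\|\omega(t)\|_{C^1}>N\}$ is dense for every $N$; with openness and Baire this gives that $\mathcal D=\bigcap_N A_N$ is residual. The mechanism separates the two phenomena cleanly: the background (e.g.\ a radial shear) is itself a \emph{steady} state with no vorticity growth but an unbounded, spiraling flow map, while the generic nearby perturbation is filamented by the differential rotation and is what actually diverges in $C^1$.

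The main obstacle is Move 1, namely establishing $\sup_t\|\nabla\phi(t)\|=\infty$ for a dense class of genuinely $C^1$ data. The delicacy is that for $C^1$ data the radial limit $\lim_{r\to0}\omega_0(r,\theta)=\omega_0(0)$ is \emph{constant} in $\theta$, so the leading scale-invariant profile of Proposition \ref{Full Euler link} is a trivial rigid rotation and the set-monotonicity/spiraling of Theorem \ref{thm:expanding-sets} and Theorem \ref{thm:conv-trav-wave} does not apply directly; the winding must instead be extracted from the sub-leading, radius-dependent part of $\Omega$, and one must show this differential winding does not saturate but accumulates (essentially linearly) as $t\to\infty$. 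This demands quantitative control that the angular profile near the origin stays non-degenerate for all time—a quasi-steadiness that the scale-invariant framework supplies—together with a lower bound converting accumulated winding into a genuine lower bound on $\|\nabla\phi(t)\|$. A secondary, purely technical point is reconciling the function-space topology in which Koch's perturbation result is phrased with the $X$-topology, which is handled by taking the perturbations small in the stronger of the two norms.
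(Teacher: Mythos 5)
Your overall architecture is the same as the paper's: write the growth set as $\bigcap_N A_N$ with $A_N$ open (by continuity of the solution map on compact time intervals) and dense, prove density by first producing a dense class of backgrounds with unbounded flow-map gradient and then invoking Koch's perturbation theorem, and conclude by Baire. Your ``Move 2'' is essentially identical to the paper's argument (modulo the small point that Koch's lemma as used here requires the background to be in $C^2_m$, which is arranged by an extra approximation). The problem is Move 1, and you have correctly identified it as the main obstacle --- but you have not closed it, and the mechanism you propose is not the one that works.

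Your Move 1 rests on differential rotation between two nearby radii $r_1<r_2$ in a small annulus near the origin, and you need the radius-dependence of the angular velocity $\Omega(r,t)$ there to remain non-degenerate for all time so that $\int_0^t[\Omega(r_2,s)-\Omega(r_1,s)]\,ds$ accumulates linearly. Nothing in the scale-invariant framework supplies this: Proposition \ref{Full Euler link} only controls the limit $r\to 0$, which for $C^1$ data is the constant $\omega_0(0)$ (a rigid rotation, as you note), and the sub-leading radial profile is mixed by the flow with no a priori quasi-steadiness. The paper's Lemma \ref{LagrangianChaos} avoids this entirely by contrasting the origin with \emph{infinity} rather than two nearby radii: the open dense condition $\omega_0(0)\neq 0$ pins the winding rate at the origin for all time, because the vorticity at the stagnation point is conserved and $m$-fold symmetry with $m\geq 3$ forces $\nabla u(0,t)$ to be a pure rotation, giving $u(x,t)=x^\perp+O(|x|^2)$ after normalization; meanwhile at radius $R\gg 1$ the angular speed is $O(1/R)$ uniformly in time since $u$ is uniformly bounded ($\omega\in L^1\cap L^\infty$). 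Assuming $\|\nabla\Phi\|_{L^\infty}\leq K$, the inner and outer arcs of a fixed quarter-annulus then wind at rates $\approx 1$ and $\leq 0.01$ respectively, and a Fubini/area argument forces the images of the two radial boundary segments to approach each other at rate $c/t$, contradicting the bound on $\nabla\Phi^{-1}$. You should replace your Move 1 with this argument (or supply the missing quantitative persistence of the radial shear near the origin, which is substantially harder and is not what the paper does); as written, the density of your set $\mathcal S$ is not established.
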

The basic idea in the proof of Theorem \eqref{R2Theorem} is that trajectories around $0$ and around the point at infinity are generically winding with different speeds. The speed of local winding around $0$ is given precisely by $\omega_0(0)$ (because of the $m$-fold symmetry), while the winding at infinity is winding with speed zero due to the fact that the velocity is uniformly bounded. This leads to spiral formation and unbounded growth of the gradient of the flow-map. 

A corresponding result holds on $\mathbb{S}^2$ since generically the winding around the north and south poles are different. We prove in Proposition \ref{prop: movement of poles} that, under $m$-fold symmetry symmetry, the trajectories of particles around the north and south poles are given asymptotically by two scale invariant Euler flows, which are \emph{generically} different. A consequence is generic unbounded growth in $C^1$ for $m$-fold symmetric solutions on $\mathbb{S}^2.$
\begin{theorem}\label{S2Theorem}
Consider the class $C^1_m(\mathbb{S}^2),\ m\geq 3$ of $C^1$ and $m$-fold symmetric vorticities. Then, the set of all initial data in $C^1_m$ whose Euler solution diverges in $C^1$ as $t\rightarrow\infty$ is of second category (in particular, such data are dense in $C^1_m$).  
\end{theorem}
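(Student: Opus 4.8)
The plan is to mirror the proof of Theorem \ref{R2Theorem}, replacing the pair (origin, point at infinity) by the two antipodal fixed points of the $m$-fold symmetry on $\mathbb{S}^2$, which I will call the north and south poles $N$ and $S$. The underlying mechanism is that, for a generic datum, particles wind around $N$ and around $S$ at different asymptotic angular speeds, so that a material arc joining the two polar caps gets wound around the $N$--$S$ axis without bound; this forces the flow-map gradient to be unbounded in time, and Koch's theorem \cite{Koch} then upgrades unbounded flow-map gradient to genuine $C^1$ growth of the vorticity for a dense family of data. A Baire category argument finally promotes density to second category. Note that $m\geq 3$ is exactly what guarantees $u(N)=u(S)=0$ (the only rotation-invariant tangent vector at a fixed point of a rotation by $2\pi/m$, $m \geq 3$, is zero), so the poles are stationary for the flow and trajectories genuinely wind around them, and it is also what makes \eqref{SIBSLaw} solvable.

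First I would fix geodesic polar coordinates $(r,\theta)$ centered at each pole, in which the $m$-fold rotation acts as $\theta\mapsto\theta+2\pi/m$ and the Euler flow near the pole resembles a planar flow near the origin. Since $\omega_0\in C^1_m(\mathbb{S}^2)$, the radial limits at the poles exist and are \emph{constant} in $\theta$, equal to $\omega_0(N)$ and $\omega_0(S)$ respectively. Applying Proposition \ref{prop: movement of poles} at each pole, the angular motion of trajectories near $N$ (resp.\ $S$) is governed asymptotically by the scale-invariant Euler flow with constant data $\omega_0(N)$ (resp.\ $\omega_0(S)$). For constant data the system \eqref{SIEuler}--\eqref{SIBSLaw} reduces to $G\equiv\tfrac14\omega_0(N)$ and a rigid rotation $\dot\theta=\tfrac12\omega_0(N)$, so the asymptotic winding rate around $N$ is $\tfrac12\omega_0(N)$ and that around $S$ is $\tfrac12\omega_0(S)$, up to the orientation of the frame, which I would fix consistently along the $N$--$S$ axis.

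The key quantitative step is to convert the relative winding into unbounded flow-map gradient. Tracking the longitudinal advance of the two polar caps, the relative winding angle between them behaves like $\tfrac12\bigl(\omega_0(N)-\omega_0(S)\bigr)\,t$ as $t\to\infty$. On the open dense set $U=\{\omega_0\in C^1_m(\mathbb{S}^2):\omega_0(N)\neq\omega_0(S)\}$ this grows without bound, so the image under the flow of any fixed meridian joining $N$ to $S$ spirals around the axis infinitely often, whence the angular derivative of the flow map, and thus $\sup_t\|D\phi(t)\|_{L^\infty}$, is infinite for every $\omega_0\in U$. I expect this to be the main obstacle: one must control the error terms in the "asymptotically scale-invariant near the pole" approximation of Proposition \ref{prop: movement of poles} \emph{uniformly in} $t$ so that the winding truly accumulates, and one must quotient out any overall rigid rotation of $\mathbb{S}^2$ (which is an isometry and does not stretch) in order to see that it is the genuinely differential winding $\omega_0(N)-\omega_0(S)$ that drives the growth.

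Finally I would run the Baire category argument as in Theorem \ref{R2Theorem}. For each $n$ let $A_n=\{\omega_0\in C^1_m:\sup_t\|\nabla\omega(t)\|_{L^\infty}>n\}$, which is open by local well-posedness and continuous dependence in $C^1$ on finite time intervals. For density, approximate any datum by some $\bar\omega_0\in U$; by the previous step $\bar\omega_0$ has unbounded flow-map gradient, so Koch's theorem \cite{Koch} yields data arbitrarily close to $\bar\omega_0$ whose vorticity gradient exceeds $n$ at some finite time, placing them in $A_n$. Hence each $A_n$ is open and dense, so $A=\bigcap_n A_n$ is a dense $G_\delta$ and therefore of second category. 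Every element of $A$ satisfies $\sup_t\|\nabla\omega(t)\|_{L^\infty}=\infty$, and a routine refinement replacing $\sup_t$ by $\sup_{t>T}$ shows the divergence occurs as $t\to\infty$, which is the claim.
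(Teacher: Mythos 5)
Your proposal follows essentially the same route as the paper: the paper proves Theorem \ref{S2Theorem} by declaring that Lemmas \ref{Koch} and \ref{LagrangianChaos} carry over to the sphere with the pair (origin, infinity) replaced by the two poles, whose asymptotic winding rates are the generically distinct constants determined by $\omega_0(N)$ and $\omega_0(S)$ via Proposition \ref{prop: movement of poles}, followed by the identical Baire category argument. Your write-up actually supplies more detail than the paper does on the relative-winding step (the only caveat being the sign convention at the south pole, where Proposition \ref{prop: movement of poles} runs the limiting system backward in time, so the relevant generic condition may read $\omega_0(N)+\omega_0(S)\neq 0$ rather than $\omega_0(N)\neq\omega_0(S)$ --- an open dense condition either way).
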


\begin{remark}
\begin{itemize}
Two remarks are in order. 
    \item We emphasize again that there is no smallness hypothesis on the initial data.
    \item The analyses made here generalizes to compact smooth surfaces of revolution that posses at least two points at the axis of symmetry, as around those points the surface is necessarily flat and it can be analogously shown that the angular trajectories of particles around those points are given by two distinct scale invariant Euler flows.
\end{itemize}

\end{remark}
The following Figure \ref{fig:evo on the sphere} illustrates a caricature of the spiral formation associated to the growth exhibited in Theorem \ref{S2Theorem}.
\begin{figure}[H]
     \centering
     \begin{subfigure}[b]{0.4\textwidth}
         \centering
         \includegraphics[width=\textwidth]{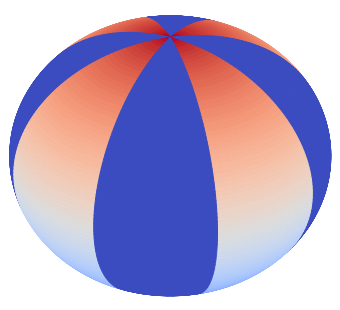}
     \end{subfigure}
     \hspace{.2in}
     \begin{subfigure}[b]{0.4\textwidth}
         \centering
         \includegraphics[width=\textwidth]{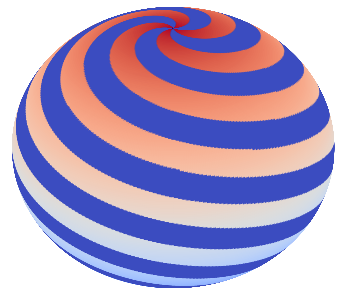}
     \end{subfigure}
        \caption{Evolution on the Sphere. On the left an initial vorticity is displayed, and on the right a caricature of how the vorticity will look at large time.}
        \label{fig:evo on the sphere}
\end{figure}



\section{The zero-homogeneous Euler System: the theorems}

We now state, in full generality, the theorem describing the asymptotic behavior of solutions of the zero-homogeneous Euler equation.

\begin{theorem}\label{0-hom Euler Cauchy thm}
Consider $g_0\in L^\infty_m(\mathbb{S}^1)$ with $m\geq 4$. Then the Cauchy problem
\begin{equation}\label{0 homogeneous limit system with initial data}
\begin{cases}
\partial_t g+2G\partial_{\theta}g=0\\
4G+\partial_{\theta \theta}G=g,
\end{cases}
\text{ with } g(0,\cdot)=g_0,
\end{equation}
admits a unique global in time solution $g\in L^\infty(\mathbb{R}, L^\infty_m(\mathbb{S}^1))\cap C^0(\mathbb{R}, L_m^1(\mathbb{S}^1))$. Furthermore, the limiting behavior of $g$ when $t$ goes to $+\infty$ is given as follows. First, defining the characteristic flow 
\[
\frac{d}{dt}\chi(t,\theta)=2G\prt{\chi(t,\theta)}, \ \chi(0,\theta)=0.
\]
We decompose
\[
\mathbb{S}^1=E\cup C, \text{ with } C=\left\{\theta,\ \partial_\theta\chi(t,\theta)\underset{t\rightarrow+\infty}{\longrightarrow0}\right\}
\]
and let
\[
E=\{\theta, \text{ for all }t,\ \partial_\theta\chi(t,\theta) \text{ is strictly increasing in } t\}.
\]
Moreover, either
\[
g \underset{t\rightarrow+\infty}{\rightharpoonup} \dashint g_0  \ \text{ \underline{or} } \ E \text{ is finite}.
\]
In the case where $E$ is finite, with $E=\{a_{0},a_2,a_4,\cdots,a_{2(n-1)}\}$, then 
\begin{itemize}
    \item If $g_0$ is continuous at any point of $E,$ then $g$ relaxes point-wise almost everywhere to $\dashint_{\mathbb{S}^1} g_0$.
    \item If on all points of $E$, $g_0$ has one-sided limits with distinct left and right limits, then $g$ relaxes point-wise almost everywhere to a piecewise constant profile with $2\abs{E}$ jumps where the value at the jumps are exactly given by $\displaystyle \prt{\lim_{\epsilon \rightarrow 0}g_0({a_{2i}-\epsilon}),\lim_{\epsilon \rightarrow 0}g_0(a_{2i}+\epsilon)}_{0\leq i\leq n-1}$. 
\end{itemize}

\end{theorem}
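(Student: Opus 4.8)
The plan is to prove the four assertions in sequence: global well-posedness, the single-crossing structure of $\partial_\theta\chi$ that defines $E$ and $C$, the dichotomy, and the identification of the asymptotic profile. For well-posedness I would first note that an $m$-fold symmetric function has Fourier support on multiples of $m$, and since $4-(jm)^2<0$ for every $j\geq 1$ when $m\geq 3$, the operator $L:=4+\partial_{\theta\theta}$ is boundedly invertible on $L^\infty_m(\mathbb{S}^1)$ and gains two derivatives; hence $G=L^{-1}g\in W^{2,\infty}_m$ whenever $g\in L^\infty_m$. In particular the advecting field $2G$ is Lipschitz, uniformly in time as long as $\|g\|_\infty$ is controlled, and $\|g(t)\|_\infty=\|g_0\|_\infty$ because $g(t,\cdot)$ is a rearrangement of $g_0$ along the bi-Lipschitz flow $\chi$. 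Cauchy--Lipschitz then gives a unique global flow and a unique solution via $g(t,\chi(t,\theta))=g_0(\theta)$, with $C^0(\mathbb{R},L^1_m)$ continuity following from time-continuity of $\chi$ and dominated convergence. I would first carry out the monotonicity computation below for smooth data, which requires two spatial derivatives, and pass to general $L^\infty_m$ data by approximation and flow stability.

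The crux is the monotonicity. Differentiating $\dot\chi=2G(t,\chi)$ in $\theta$ gives $\tfrac{d}{dt}\partial_\theta\chi=2\,\partial_\theta G(t,\chi)\,\partial_\theta\chi$, so $\partial_\theta\chi>0$ and its behavior is governed by the sign of $h(t):=\partial_\theta G(t,\chi(t,\theta))$. The desired statement (the content of Theorem \ref{thm:expanding-sets}) is that $h$ changes sign at most once, and only from $+$ to $-$; equivalently, $h$ can only vanish while strictly decreasing. A zero of $h$ at time $\tau$ means the particle sits at a critical point $\theta_*=\chi(\tau,\theta)$ of $G(\tau,\cdot)$. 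Writing $G'=\partial_\theta G$ and using $\partial_t g=-2G\partial_\theta g=\partial_\theta\Psi$ with $\Psi=-4G^2-2GG''+(G')^2$, the identity $\partial_t\partial_\theta G=(L-4)L^{-1}\Psi=\Psi-4L^{-1}\Psi$, and the algebraic relation $\Psi=-L(G^2)+3(G')^2$, a direct computation collapses (at $\theta_*$, where $G'=0$) to the clean formula
\[
\tfrac{d}{dt}h(\tau)=-12\,\big(L^{-1}\big[(G')^2\big]\big)(\theta_*).
\]
Thus the entire monotonicity reduces to the positivity statement that $L^{-1}$ applied to the \emph{nonnegative} function $(G')^2$ is nonnegative at the critical points of $G$. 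I expect this to be the main obstacle: $L^{-1}=(4+\partial_{\theta\theta})^{-1}$ is not positivity preserving in general, so one must exploit the precise sign of the Green's function of $4+\partial_{\theta\theta}$ on $m$-fold symmetric functions together with the vanishing of $(G')^2$ at $\theta_*$; this is exactly where $m\geq 4$ (rather than merely $m\geq 3$) is needed. Establishing strict positivity (away from the trivial case $G\equiv\mathrm{const}$) forces $\tfrac{d}{dt}h(\tau)<0$ at every zero, ruling out tangential and upward crossings, and yields $T(\theta)$, the single-crossing structure, and the set-monotonicity of $C(t)$.

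From the single crossing, $\partial_\theta\chi$ is eventually decreasing on $C$, and I would upgrade this to $\partial_\theta\chi\searrow 0$ by showing $\int_0^\infty\partial_\theta G(s,\chi)\,ds=-\infty$ there; closedness of $E$ and openness of $C$ follow. The global constraint $\int_{\mathbb{S}^1}\partial_\theta\chi(t,\cdot)\,d\theta=2\pi$ (the flow is a degree-one homeomorphism) forces the mass lost to the collapsing labels in $C$ to reappear as stretching, which is the engine of homogenization. For the dichotomy I would argue that if the weak limit is not the mean $\dashint g_0$, then it must be an exact steady (or comoving) solution of the system, and invoke the classification in Theorem \ref{thm:class-steady-states}: a nonconstant such profile is piecewise constant with finitely many jumps, which is compatible only with a finite expanding set $E$.

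Finally, when $E=\{a_0,\dots,a_{2(n-1)}\}$ is finite, these are the only coherent labels; every other label lies in $C$, where its neighborhood is stretched so that in Eulerian variables the vorticity mixes. I would track the asymptotic Eulerian positions of the points of $E$, which become the $2|E|$ jump locations of $g_\infty$, and show that on each arc between consecutive images the vorticity relaxes to the constant prescribed by the one-sided limit $\lim_{\epsilon\to0}g_0(a_{2i}\pm\epsilon)$, obtaining pointwise a.e. convergence from the settling of $\chi(t)^{-1}(\theta)$ into these arcs. If $g_0$ is continuous at even one point of $E$, the corresponding jump is forced to vanish; but the classification forbids zero jumps in a nontrivial profile (all jumps are equal in magnitude and opposite in sign), so the limit can only be the constant $\dashint_{\mathbb{S}^1}g_0$. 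Beyond the strict positivity in the second step, the remaining delicate point is the sharp control of the flow near the coherent labels $E$ required to promote weak convergence to pointwise convergence.
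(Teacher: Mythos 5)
You have correctly isolated the engine of the paper: along Lagrangian trajectories $F=\partial_\theta G\circ\chi$ satisfies a Riccati equation $\frac{d}{dt}F=F^2-\hat c$ with $\hat c=\frac{12}{\partial_{\theta\theta}+4}\left[(\partial_\theta G)^2\right]\circ\chi$, so everything hinges on the sign of $(4+\partial_{\theta\theta})^{-1}$ applied to a nonnegative $m$-fold symmetric function. But you leave this positivity as an ``expected obstacle'' rather than proving it. In the paper this is Lemma \ref{elliptic estimate}: the $m$-fold symmetrized Green's function is computed explicitly to be $C_m\abs{\sin(\tfrac m2\theta)}+\tilde C_m$ with $C_m=\frac{3\pi}{2(m^2-4)}>0$ and $\tilde C_m\geq 0$ exactly when $m\geq 4$, which gives positivity of $\hat c$ \emph{everywhere}, not only at critical points of $G$. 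Without this computation you have no single-crossing structure and nothing downstream works, so this must be supplied, not assumed.

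The more serious gap is your route to the dichotomy and to the finiteness of $E$. You propose: if the weak limit is not the mean, it ``must be an exact steady (or comoving) solution,'' then invoke the classification of steady states to conclude $E$ is finite. This fails on two counts. First, weak limits of solutions of a quadratically nonlinear equation need not be solutions, and no compactness is available here (indeed the whole point is that $g$ is merely rearranged, not strongly convergent); moreover the paper proves the classification theorem \emph{using} the machinery of Theorem \ref{0-hom Euler Cauchy thm}, so your logical order is inverted. Second, even granting a piecewise-constant weak limit, finiteness of $E$ does not follow from it. The paper's argument is genuinely quantitative and entirely absent from your proposal: one integrates the Riccati equation via $F=-\partial_t\ln y$, $y''=\hat c\,y$, and splits into two scenarios according to whether $\int_0^\infty t\,\hat c(t,\theta)\,dt$ is finite (Lemmas \ref{ODELemma1}--\ref{ODELemma3}). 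If it is finite for some $\theta$, then by the lower bound $\hat c\geq \tilde C_m\dashint(\partial_\theta G)^2$ one gets $\int_0^\infty t\norm{\partial_\theta G}_{L^2}^2\,dt<\infty$, hence $\partial_\theta G\to 0$ in $L^2$ and weak convergence to the mean. Otherwise $\abs{E}=0$, and if $g$ does not homogenize, so that $\int(\partial_\theta G)^2\big|_{t=t_j}\geq c_0$ along a sequence $t_j\to\infty$, one proves a uniform lower bound $\partial_\theta G(t_j,\chi(t_j,\theta_n))\geq\delta(c_0,\norm{g_0}_{L^\infty})$ at every point of $E$ (Lemma \ref{lem prf rlx thm:lower bound on E}, again via the Riccati equation), interlaces $N$ points of $E$ with points of $C_k$ where $\partial_\theta G\circ\chi\leq 0$, and uses the uniform bound $\norm{\partial_{\theta\theta}G}_{L^\infty}\leq C\norm{g_0}_{L^\infty}$ to force $\delta\leq C\norm{g_0}_{L^\infty}/N$, i.e. $N$ is bounded. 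Nothing in your proposal produces this quantitative separation, and without it the statement ``either $g\rightharpoonup\dashint g_0$ or $E$ is finite'' is unproven. The same issue infects your treatment of the continuity-point case, where you again appeal to the classification (and in doing so misread it: the ``equal and opposite values'' there refer to $\partial_\theta G$ at the jump locations, not to the jump sizes of $g$); the paper instead argues by contradiction with an explicit ODE lemma (Lemma \ref{lem:prf rlx cst c0}) showing $\partial_\theta G>0$ propagates across an interval where $g$ is constant.
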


The next theorem gives a complete classification of steady states of the zero-homogeneous Euler equation in $L^\infty_m(\mathbb{S}^1)$.
\begin{theorem}\label{Steady states of the zero homogeneous Euler equation}
The set of solutions $g\in L^\infty_m(\mathbb{S}^1)$, $m \geq 4$ of the 
steady equation
\[
G\partial_\theta g=0 \text{ with } 4G+\partial_{\theta \theta}G=g.
\]
is given by the set of piecewise constant functions which have a finite even number of jumps occurring at the global maxima and minima of $\partial_\theta G$, which have exactly opposite non-zero values.

Concretely, in the case where $g$ has a finite number of jumps at the points $a_0=-\frac{\pi}{m}<a_1<\cdots<a_{n-1}<a_{2n}=\frac{\pi}{m}$, we may write
\[
g(\theta)=\sum^{2n}_{i=1}g_i\mathbbm{1}_{(a_{i-1},a_i)}(\theta), \text{ with }g_i \in \mathbb{R} \text{ and }g_i\neq g_{i+1}.
\]
Then the requirement upon the jumps occurring at the global extrema of $\partial_\theta G$ corresponds to the requirement that, for all $i\in \{1,\cdots,2n\}$,
\begin{equation}\label{System for Steady states}
\sum^{2n}_{j=1}g_j\abs{\cos\left(\frac{m}{2}(a_i-a_{j})\right)-\cos\left(\frac{m}{2}(a_i-a_{j-1})\right)}
=\frac{m(16-m^2)}{24\pi}\sum^{2n}_{j=1}g_j(a_j-a_{j-1}).
\end{equation}
Moreover an ordered sequence $(g^i)_{1\leq i\leq 2n}$  such that $g_i  \in \mathbb{R^*}, \ g_i\neq g_{i+1},$ determines through the system \eqref{System for Steady states} a unique increasing sequence $(a_i)_{0\leq i \leq 2n}\in \crch{-\frac{\pi}{m},\frac{\pi}{m}}$ and $c\in \Rr$ such that $$ \displaystyle g(t,\theta)=\sum^n_{i=1}g_i\mathbbm{1}_{(a_{i-1},a_i)}(\theta-ct)$$ is a m-fold symmetric solidly rotating steady solution to \eqref{0-hom Euler Cauchy thm}.
\end{theorem}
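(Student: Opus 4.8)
The plan is to collapse the distributional PDE classification onto a planar ODE analysis driven by a single conserved quantity, and then read off the geometric and algebraic structure. Write $H := \partial_\theta G$. Since $g\in L^\infty_m$, solving $(4+\partial_{\theta\theta})G=g$ gives $G\in C^{1,1}_m(\mathbb{S}^1)$, so $G$ and $H$ are continuous while $\partial_\theta g$ is only a measure. I read the steady equation $G\,\partial_\theta g=0$ distributionally: the measure $\partial_\theta g$ is supported in the zero set $Z:=\{G=0\}$; equivalently, $g$ is locally constant on each connected component of the open set $\{G\neq 0\}$ and every discontinuity of $g$ sits at a point of $Z$. The whole theorem will follow once $Z$ and the behaviour of $g$ across it are understood.

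The engine is the observation that the steady equation produces a \emph{global} first integral. Using $\partial_{\theta\theta}G=g-4G$ and, crucially, $G\,\partial_\theta g=0$, one computes distributionally
\[ \frac{d}{d\theta}\left[ H^2+4G^2-2gG\right]=2H(\partial_{\theta\theta}G+4G-g)-2G\,\partial_\theta g=0. \]
The bracketed quantity is in fact continuous on all of $\mathbb{S}^1$: the only possibly discontinuous term is $gG$, but $G$ vanishes at every discontinuity of $g$, so $gG$ is continuous there. A continuous function with vanishing distributional derivative is constant, hence $H^2+4G^2-2gG\equiv\mathcal{E}$ on $\mathbb{S}^1$ for some $\mathcal{E}\in\mathbb{R}$. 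Evaluating on $Z$ gives $H^2=\mathcal{E}$, so $\mathcal{E}\ge 0$ whenever $Z\neq\emptyset$.

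Finiteness, the piecewise-constant structure, and the extremal characterization all now follow from the sign of $\mathcal{E}$ together with a phase-plane picture in which, on each piece where $g\equiv g_i$, the point $(G,H/2)$ travels on the circle centred at $(g_i/4,0)$ at constant angular speed $2$; a piece of $\theta$-length $\ell$ therefore subtends a circular arc of angle $2\ell$. If $\mathcal{E}<0$ then $H^2=\mathcal{E}$ is impossible on $Z$, so $G$ never vanishes, $\partial_\theta g$ has empty support, and $g$ is a constant (the trivial no-jump case, including $g\equiv 0$). If $\mathcal{E}=0$, every zero of $G$ is a double zero and a non-zero constant piece corresponds to a phase circle through the origin, traversable only over a full period $\ell=\pi$, which exceeds the fundamental-domain length $2\pi/m\le\pi/2$ for $m\ge 4$; hence $g\equiv 0$. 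If $\mathcal{E}>0$, then $H=\pm\sqrt{\mathcal{E}}\neq 0$ at every zero of $G$, so all zeros are transversal, isolated, and finite on the compact circle; thus $g$ is piecewise constant with finitely many jumps, each located at a zero of $G$ where $\partial_\theta G=\pm\sqrt{\mathcal{E}}$ takes two opposite non-zero values. Moreover, for $m\ge 4$ each piece has $\ell\le 2\pi/m\le\pi/2$, so its arc-angle is at most $\pi$; the realized arc joining the two boundary points of $Z$ is then the minor arc, on which $|H|\le\sqrt{\mathcal{E}}$ with equality only at the endpoints. Consequently $|\partial_\theta G|\le\sqrt{\mathcal{E}}$ on all of $\mathbb{S}^1$ with equality exactly at the jumps, which are therefore precisely the global maxima and minima of $\partial_\theta G$. (For $m<4$ a piece could subtend a major arc whose interior height exceeds $\sqrt{\mathcal{E}}$, and the characterization fails — this is exactly where $m\ge 4$ is used.) The number of jumps is even because up- and down-crossings of $Z$ must alternate.

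Finally, the explicit system and the converse. Writing $g=\sum_i g_i\mathbbm{1}_{(a_{i-1},a_i)}$ and solving $(4+\partial_{\theta\theta})G=g$ by the (image-summed) Green's function of the $m$-fold symmetric problem expresses $G(\theta)$ as an explicit trigonometric sum in the data $(g_i,a_j)$; imposing the jump conditions $G(a_i)=0$ for all $i$ is exactly the system \eqref{System for Steady states}. For the converse one prescribes the ordered values $(g_i)$ and seeks the locations $(a_i)$ and, for solidly rotating profiles $g(\theta-ct)$, the speed $c$; here the steady operator is replaced by $2G-c$, so that jumps occur on $\{2G=c\}$ and the first integral becomes $H^2+4G^2-2(g+2c)G\equiv\mathcal E$, with $c$ fixed by the mean/solvability constraint obtained from integrating over a period, and unique solvability of $(a_i)$ following from a monotonicity/implicit-function argument applied to \eqref{System for Steady states}. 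I expect the main obstacle to be the rigorous finiteness step at merely $L^\infty$ regularity — ruling out a positive-measure or Cantor-type zero set $Z$ and confirming that $g$ is genuinely piecewise constant rather than only almost-everywhere locally constant — together with establishing the unique solvability of the transcendental system \eqref{System for Steady states} for the jump locations.
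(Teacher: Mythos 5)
Your classification argument takes a genuinely different route from the paper and, for the main part of the statement, it works. The paper recycles the Lagrangian machinery of Theorem \ref{0-hom Euler Cauchy thm}: it runs the steady state through the flow map $\frac{d}{dt}\chi=2G\circ\chi$, invokes the expanding/contracting set analysis and uniqueness of the weak limit to get finiteness of the jumps, and then obtains the extremal characterization by multiplying $4G+\partial_{\theta\theta}G=g$ by $\partial_\theta G$ and integrating over each component of constancy, which yields $\partial_\theta G(\underline{C_i})^2=\partial_\theta G(\overline{C_i})^2$ with opposite signs. Your first integral $H^2+4G^2-2gG\equiv\mathcal E$ is a purely stationary, self-contained substitute: it is a correct distributional computation (note $\partial_\theta(gG)=gH$ is exactly the weak form of $G\partial_\theta g=0$, so $gG$ is Lipschitz and the constancy argument is legitimate), and the trichotomy in $\mathcal E$ plus the minor-arc estimate gives finiteness, transversality of the zeros, the alternation $\partial_\theta G=\pm\sqrt{\mathcal E}$ at consecutive jumps (hence evenness), and the global-extremum property in one stroke. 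In fact your phase-plane argument is \emph{more} complete than the paper's on the point of why the jump values are \emph{global} extrema, and the worry you raise at the end about Cantor-type zero sets is already resolved by your own case analysis: for $\mathcal E>0$ the zeros are transversal hence finite, and for $\mathcal E\le 0$ the arc-length count forces $g$ constant (indeed $g\equiv 0$ when $\mathcal E=0$), which is exactly where $m\geq4$ enters in both proofs.

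The genuine gap is in the last assertion of the theorem: that a prescribed ordered sequence $(g_i)$ determines a \emph{unique} increasing sequence $(a_i)$ and speed $c$. You only gesture at "a monotonicity/implicit-function argument applied to \eqref{System for Steady states}," but that system is a coupled transcendental system in the $a_i$ and nothing you have written controls it. The paper's proof is concrete here: it computes $G$ explicitly on two adjacent intervals as shifted sinusoids, imposes $C^1$ matching of $G$ at each $a_i$ to get the two-term recursion $g_i\tan(a_i-a_{i-1})=-g_{i+1}\tan(a_{i+1}-a_i)$ (which also forces the $g_i$ to strictly alternate in sign for a nontrivial solution --- a necessary condition you never derive), so that all gaps are monotone functions of the first gap $a_1-a_0$, and the constraint $a_{2n}-a_0=\frac{2\pi}{m}$ then pins everything down. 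You would need to supply something of this kind. A smaller slip in the same sketched portion: your rotating-frame first integral is wrong; for $(2G-c)\partial_\theta g=0$ the conserved quantity is $H^2+4G^2-2gG+cg$, not $H^2+4G^2-2(g+2c)G$, and the jumps sit on the level set $\{2G=c\}$ rather than $\{G=0\}$.
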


\begin{remark}

It is important to highlight that all of the stated results in this section are for $m\geq 4$, despite the fact that solutions are well-defined for $m\geq 3.$ It can be shown that all of the statements above still hold in the case $3-$fold odd symmetric data. It can also be shown that, as stated, all of the previous results fail for general $3-$fold symmetric data. The dynamics of solutions when $m=3$ may be more complicated and this is an interesting case for further investigation.  

\end{remark}

\subsection{Proof of Theorem \ref{0-hom Euler Cauchy thm} }

We now turn our attention to proving our first main theorem. Before we begin, we recall that the well-posedness theory for these equations with data in $L_m^\infty(\mathbb{S}^1)$, with $m \geq 3$, can be found in \cite{EJ-symmetries}. 

\subsubsection{The Riccati structure }
To elucidate the underlying Riccati structure in the evolution, we compute the evolution equation on $$\partial_\theta G=\partial_\theta(4+\partial_{\theta \theta})^{-1}g.$$
\paragraph{\textbf{$\bullet$ The equation on $\partial_\theta G$.}}
We rewrite the system on $g$ as follows
\[
\partial_t (\partial_{\theta \theta}+4)G+2G\partial_\theta (\partial_{\theta \theta}G+4G)=0\Rightarrow \partial_t (\partial_{\theta \theta}+4)G+4\partial_\theta G^2+2G \partial_{\theta \theta \theta}G=0.
\]
We then compute 
\[
2G \partial_{\theta \theta \theta}G=2\partial_{\theta}(G\partial_{\theta \theta}G)-2\partial_{\theta}G\partial_{\theta \theta}G=\partial_{\theta}[2\partial_\theta(G\partial_{\theta }G)-2(\partial_{\theta }G)^2]-\partial_\theta(\partial_{\theta}G)^2=\partial_{\theta \theta \theta}G^2-3\partial_\theta(\partial_{\theta}G)^2.
\]
Substituting back into the equation we get
\[
\partial_t (\partial_{\theta \theta}+4)G+(\partial_{\theta \theta}+4)\partial_\theta G^2-3\partial_\theta (\partial_\theta G)^2=0,
\]
and thus,
\[
\partial_t G+\partial_\theta G^2-3\frac{\partial_\theta}{\partial_{\theta \theta}+4} (\partial_\theta G)^2=0.
\]
This finally gives
\[
\partial_t \partial_\theta G+2G\partial_\theta \partial_\theta G+2(\partial_\theta G)^2-3\frac{\partial_{\theta\theta}}{\partial_{\theta \theta}+4} (\partial_\theta G)^2=0.
\]
To isolate the leading order term we write
\[
\partial_t \partial_\theta G+2G\partial_\theta \partial_\theta G-(\partial_\theta G)^2+\frac{12}{\partial_{\theta \theta}+4} (\partial_\theta G)^2=0.
\]
We define
\[
c(t,\theta)=\frac{12}{\partial_{\theta \theta}+4} (\partial_\theta G)^2.
\]
 In order to exhibit the Lyapunov structure, it is crucial for us to have $c(t,\theta) >0$, which is established by the following lemma whenever $g$ is not identically constant. 
\begin{lemma}\label{elliptic estimate}
For non-negative $h\in C(\mathbb{R})$ that is $\frac{2\pi}{m}$ periodic with $m\geq 4$,
\[
c_h:=\frac{12}{\partial_{\theta \theta}+4}h\geq \tilde C_m \dashint_{-\pi}^\pi h,
\] where $\tilde C_m>0$ when $m\geq 5$ and $\tilde C_4=0.$
Moreover, if there exists $\theta$ such that $c_h(\theta)=0$ then necessarily $h=0$.
\end{lemma}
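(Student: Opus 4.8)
The plan is to reduce the estimate to an explicit computation of the periodic Green's function of $\partial_{\theta\theta}+4$ and read off positivity directly from it. Set $w=\tfrac{1}{12}c_h$, so that $w$ is the unique $\tfrac{2\pi}{m}$-periodic solution of $w''+4w=h$. This is well-defined because $4$ is not an eigenvalue of $-\partial_{\theta\theta}$ on $\tfrac{2\pi}{m}$-periodic functions: those eigenvalues are $(mk)^2$, $k\geq 0$, all distinct from $4$ once $m\geq 3$. Writing $L=\tfrac{2\pi}{m}$, I would represent $w$ as the periodic convolution $w=G*h$, where $G$ is the $L$-periodic Green's function solving $G''+4G=\delta_0$ over one period. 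Seeking an even $G$ built from $\cos(2\theta),\sin(2\theta)$ away from the origin and imposing the derivative jump $G'(0^+)-G'(0^-)=1$ together with the symmetry/periodicity condition $G'(L/2)=0$, one finds, for $|\theta|\leq L/2$,
\[
G(\theta)=\frac{\cos\!\big(2|\theta|-L\big)}{4\sin L}.
\]
Since $m\geq 4$ gives $L\in(0,\tfrac{\pi}{2}]$, we have $\sin L>0$, so $G$ is well-defined, and as $2|\theta|-L$ sweeps $[-L,0]$ the cosine increases; hence $G\geq 0$ with $\min_{[-L/2,L/2]}G=G(0)=\tfrac{\cos L}{4\sin L}$.

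\textbf{The inequality.} With the kernel in hand, the estimate is immediate. Because $h\geq 0$ and $G(\theta-\cdot)\geq G(0)$ pointwise, periodic convolution yields $w(\theta)\geq G(0)\int_{-L/2}^{L/2}h$ for every $\theta$, and by periodicity $\int_{-L/2}^{L/2}h=L\dashint_{-\pi}^{\pi}h$. Therefore
\[
c_h=12w\geq 12\,L\,G(0)\,\dashint_{-\pi}^{\pi}h=\tilde C_m\,\dashint_{-\pi}^{\pi}h,\qquad \tilde C_m=\frac{6\pi}{m}\cot\frac{2\pi}{m}.
\]
The sign claim then reads off from $\cot$ on $(0,\tfrac{\pi}{2}]$: for $m\geq 5$ one has $L<\tfrac{\pi}{2}$ and $\cot L>0$, so $\tilde C_m>0$, whereas for $m=4$ one has $L=\tfrac{\pi}{2}$, $\cot L=0$, so $\tilde C_4=0$.

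\textbf{Rigidity.} For the last assertion, suppose $c_h(\theta_0)=0$ at some $\theta_0$. Since $G\geq 0$, $h\geq 0$, and both are continuous, the vanishing of $\int G(\theta_0-\theta')h(\theta')\,d\theta'$ forces $G(\theta_0-\theta')\,h(\theta')=0$ for all $\theta'$. From the explicit formula $G$ vanishes only where $\cos(2|\theta|-L)=0$, that is, at finitely many points per period (and at none at all when $m\geq 5$, where $G>0$ strictly). Hence $h$ vanishes off a finite set, and continuity forces $h\equiv 0$.

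\textbf{Main obstacle.} I expect the one delicate point to be the degenerate case $m=4$, where $L=\tfrac{\pi}{2}$ makes $\cos L=0$: this is precisely where the constant collapses to $\tilde C_4=0$ and where $G$ acquires a zero on the diagonal, so the rigidity argument must be run through finiteness of the zero set of $G$ rather than strict positivity. One must also verify carefully the boundary data used to pin down $G$ (the continuity and symmetry at $\theta=L/2$ and the unit derivative jump at $0$), since the exact value of $\tilde C_m$ hinges on that normalization; everything else is routine.
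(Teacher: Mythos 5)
Your proof is correct, and it takes a genuinely different (and self-contained) route from the paper's. The paper starts from the kernel of $(\partial_{\theta\theta}+4)^{-1}$ on the full circle imported from \cite{EJ-symmetries}, symmetrizes it over the $m$ rotations, posits that the symmetrized kernel has the closed form $C_m\left|\sin\left(\frac{m}{2}\theta\right)\right|+\tilde C_m$, and then determines $C_m,\tilde C_m$ by testing against $\sin(m\theta)$ and against $1$. You instead solve the two-point boundary value problem for the $\frac{2\pi}{m}$-periodic Green's function directly, obtaining $G(\theta)=\frac{\cos(2|\theta|-L)}{4\sin L}$ on $|\theta|\leq L/2$ with $L=\frac{2\pi}{m}$; I checked this against the Fourier coefficients ($\widehat G_k$ proportional to $\frac{1}{4-(mk)^2}$) and it is exact. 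Your route buys several things: it needs no external kernel formula; the sign dichotomy is transparent, since the minimum of the kernel sits on the diagonal and equals $\frac{\cos L}{4\sin L}$, which is positive precisely when $L<\frac{\pi}{2}$ (i.e.\ $m\geq 5$), vanishes at $L=\frac{\pi}{2}$ (i.e.\ $m=4$), and goes negative for $m=3$, explaining the failure there; and it yields the sharp constant. In fact your formula is the true symmetrization of the paper's kernel: the ansatz $C_m\left|\sin\left(\frac{m}{2}\theta\right)\right|+\tilde C_m$ matches it only in the mean and the first nonzero Fourier mode (they coincide exactly only when $m=4$, where $\cos\left(2|\theta|-\frac{\pi}{2}\right)=\left|\sin\left(\frac{m}{2}\theta\right)\right|$), so your $\tilde C_m=\frac{6\pi}{m}\cot\frac{2\pi}{m}$ and the paper's $\tilde C_m=\frac14-\frac{3}{m^2-4}$ differ slightly for $m\geq 5$ — yours being the correct sharp value — although both are positive exactly when $m\geq 5$ and zero at $m=4$, which is all the lemma uses downstream. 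Your rigidity argument via the finiteness of the zero set of $G$ (a single diagonal zero when $m=4$, none when $m\geq 5$) together with continuity of $h$ is also sound and matches the intended conclusion.
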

\begin{proof}
The kernel for $\frac{1}{\partial_{\theta\theta}+4}$ operating on functions that are orthogonal to $\sin(2\theta)$ and $\cos(2\theta)$ is given in \cite{EJ-symmetries}:
\begin{equation}\label{kernel formula}
\frac{1}{\partial_{\theta \theta}+4}h(\theta)=\frac{1}{2\pi}\int^{\pi}_{-\pi} K_{\frac{1}{\partial_{\theta \theta}+4}}(\theta-\omega)h(\omega)d\omega, \text{ where }
\end{equation}
\begin{equation}
    K_{\frac{1}{\partial_{\theta \theta}+4}}(\theta)=\frac{\pi}{2}\sin(2\theta)\frac{\theta}{|\theta|}-\frac{1}{2}\sin(2\theta)\theta-\frac{1}{8}\cos(2\theta).
\end{equation}
In the case of $m$-fold symmetry we may further write
\[
\frac{1}{\partial_{\theta \theta}+4}h(\theta)=\frac{m}{2\pi}\int^{\frac{\pi}{m}}_{-\frac{\pi}{m}} {K^m_{\frac{1}{\partial_{\theta \theta}+4}}}(\theta-\omega)h(\omega)d\omega,
\]
where 
\[
{K^m_{\frac{1}{\partial_{\theta \theta}+4}}}(\theta)=\frac{1}{m}\sum^{m-1}_{j=0}{K_{\frac{1}{\partial_{\theta \theta}+4}}}\left(\theta+j\frac{2\pi}{m}\right)=C_m\left|\sin\left(\frac{m}{2}\theta\right)\right|+\tilde{C}_m.
\]
The result will follow once we show that $C_m>0$ and $\tilde C_m\geq 0.$ In fact, we will show that $C_m>0$ for any $m\geq 3$ and $\tilde C_m>0$ for any $m\geq 5,$ while $\tilde C_4=0$ and $\tilde C_3<0.$
First to compute $C_m$, we note that
\[
\frac{1}{\partial_{\theta \theta}+4}\sin(m\theta)=\frac{1}{4-m^2}\sin(m\theta)=C_m\frac{m}{2\pi}\int^{\frac{\pi}{m}}_{-\frac{\pi}{m}} \left|\sin\left(\frac{m}{2}(\theta-\omega)\right)\right|\sin(m\omega)d\omega.
\]
Then 
\begin{align*}
&\int^{\frac{\pi}{m}}_{-\frac{\pi}{m}} \left|\sin\left(\frac{m}{2}(\theta-\omega)\right)\right|\sin(m\omega)d\omega=\int^{\frac{\pi}{m}}_{-\frac{\pi}{m}} \left|\sin\left(\frac{m}{2}\omega\right)\right|\sin(m(\theta-\omega)d\omega\\
&=\int^{\frac{\pi}{m}}_{-\frac{\pi}{m}} \left|\sin\left(\frac{m}{2}\omega\right)\right|\cos(m\omega)d\omega\sin(m\theta)-\underbrace{\int^{\frac{\pi}{m}}_{-\frac{\pi}{m}} \left|\sin\left(\frac{m}{2}\omega\right)\right|\sin(m\omega)d\omega\cos(m\theta)}_{=0}\\
&=\frac{2}{m}\int^{\frac{\pi}{2}}_{-\frac{\pi}{2}} \left|\sin\left(\omega\right)\right|\cos(2\omega)d\omega\sin(m\theta)=-\frac{4}{3m}\sin(m\theta).
\end{align*}
Thus we get for $C_m$
\[
\frac{1}{4-m^2}=-C_m\frac{4}{3m}\frac{m}{2\pi} \Longleftrightarrow C_m= \frac{3\pi}{2(m^2-4)}.
\]
We proceed to compute $\tilde{C}_m$ by noting
\[
\frac{1}{\partial_{\theta \theta}+4}1=\frac{1}{4}=C_m\frac{m}{2\pi}\int^{\frac{\pi}{m}}_{-\frac{\pi}{m}} \left|\sin\left(\frac{m}{2}(\omega)\right)\right|d\omega+\tilde{C}_m \longrightarrow \tilde{C}_m=\frac{1}{4}-\frac{2C_m}{\pi}.
\]
\end{proof}
\begin{remark}
The failure of the results for $m=3$ stems from the fact that $\tilde{C}_3<0$ and thus $K^3_{\frac{1}{\partial_{\theta\theta}+4}}$ is not signed.
\end{remark}
Henceforth we will suppose that $g_0$ is not identically constant, as otherwise all of the stated results are trivial. Returning to the equation on $\partial_\theta G$, we now have that
\[
\partial_t \partial_\theta G+2G\partial_\theta \partial_\theta G-(\partial_\theta G)^2+c=0,
\]
where $c$ is a strictly positive function from the previous lemma. We now define the flow map
\[
\frac{d}{dt}\chi=2G\circ \chi, \ \  \chi(0,\cdot)=Id.
\]
Then by defining $F=\partial_\theta G\circ \chi$ and differentiating the previous equation in $\theta$ we obtain the system 
\begin{equation}\label{eqn:riccati-euler}
\begin{cases}
\frac{d}{dt}F=F^2-\hat c\\
\frac{d}{dt}\partial_\theta \chi=2F \partial_\theta \chi
\end{cases} \text{ with } F(0,\theta)=\partial_\theta G(0,\theta)=\partial_\theta(4+\partial_{\theta \theta})^{-1} g_0(\theta),\ \hat c=c\circ \chi \text{ and } \partial_\theta \chi(0)=1.
\end{equation}
This Riccati equation essentially reduces our problem to solving a family of ODEs, parameterized by $\theta$. Furthermore, the signed structure of the forcing term $\hat c$ enables us to uncover critical monotone quantities in the equations in the following section.
\begin{remark}
An analogous Riccati structure has been identified in \cite{SarriaSaxton} for Proudman-Johnson equation where it was proved among other things that regular solutions converge to constants for the equation with an analogous scaling to the scale invariant 2d Euler equation. A key difference in technique is that the Riccati equation obtained in \cite{SarriaSaxton} does not depend on the analogous variable to $\theta$.
\end{remark}

\subsubsection{An Entropy: The expanding and contracting sets}
We now observe that, by the strict positivity of $\hat c$ along with the differential equation \eqref{eqn:riccati-euler}, that if $F(t,\theta)\leq 0$ for a given time $t$ then we will have that $F(\hat t, \theta) < 0$ for all $\hat t > t$. In other words, the following sets are increasing in the sense of inclusion:
\[
C_t=\{\theta, F(t,\theta)< 0\}.
\]
It is then natural to define $C=\cup_{t\in \Rr }C_t$ and $E=[-\pi,\pi]\setminus C$. One interpretation of $E$ and $C$, stemming from the evolution equation for $\partial_\theta \chi$, is that
\[C=\{\theta,\ \partial_\theta\chi(t,\theta) \text{ is eventually decreasing in }t\},
\]
and 
\[
E=\{\theta, \text{ for all }t,\ \partial_\theta\chi(t,\theta) \text{ is strictly increasing in } t\}.
\]
Thus we may interpret $E$ as the set of expanding points and $C$ the set of the (eventually) contracting points, all along the Lagrangian trajectories. We also note that as $F$ belongs to $W^{1,\infty}$ in both time and space that the function
\[
S(t)=|C_t|,
\]
is a strictly increasing function of  time.
\subsubsection{Detailed analysis of the expanding and contracting sets}
\paragraph{\textbf{Integrating the Riccati equation:}}
Following the classical approach to solving scalar Riccati equations, we now define $y$ such that \[F=-\partial_t\ln\left(y\right)=-\frac{\partial_ty}{y}\text{ thus }-\frac{\partial_ty(0,\theta)}{y(0,\theta)}=\partial_\theta(4+\partial_{\theta \theta})^{-1} g_0(\theta).\] At $t=0$ we have an extra degree of freedom which we fix by setting $y(0,\theta)=1$. Then, by using \eqref{eqn:riccati-euler} we have that 
\[
\partial_\theta \chi(t,\theta)=\frac{1}{y^2(t,\theta)}, \ \ g=g_0\circ \chi^{-1}(t,\theta),
\]
and furthermore we find that $y$ satisfies the second-order differential equation
\begin{equation}\label{eqn:y-ODE}
\partial_{tt}y=\hat cy, \ \ y(0,\theta)=1, \ \ \partial_t y(0,\theta)=-\partial_\theta(4+\partial_{\theta \theta})^{-1} g_0(\theta)=-\partial_\theta G_0(\theta).
\end{equation}
From the global well-posedness of the system \eqref{0 homogeneous limit system with initial data} we see that for finite $t$, $y(t,\theta)\neq0$, thus from $y(0,\theta)=1$ we have $y(t,\theta)>0$. As $\hat c$ is positive, we have that $y(\cdot,\theta)$ is strictly convex and is decreasing on interval $[0,T_0(\theta)]$ with $T_0(\theta)\in[0,+\infty]$. Note that if $T_0(\theta)<+\infty$ then $y(\cdot,\theta)$ necessarily converges to $+\infty$ and if not then $y(\cdot,\theta)$ converges to a $0\leq l(\theta)\leq 1$. Thus by definition we get 
\[C=\left\{\theta: \ T_0(\theta)<+\infty\right\}=\left\{\theta,\ \partial_\theta\chi(t,\theta)\underset{t\rightarrow+\infty}{\longrightarrow0}\right\},
\]
and
\[
E=\left\{\theta: \ T_0(\theta)=+\infty \right\} = \left\{y(t,\theta)\underset{t\rightarrow+\infty}{\longrightarrow}l(\theta) \right\}.
 \]
We now proceed to show that $E$ is non empty: we assume, for that sake of contradiction, that it is empty. Then we have that $T_0(\theta)<+\infty$ for all $\theta$. Furthermore, by the strict positivity of $\hat c$ we have that $T_0(\theta)$ is continuous, and hence $T_0(\theta)$ is bounded from above by a constant $\bar T$. Then for $t > \bar T$ we have that $\partial_\theta \chi(t,\theta)$ is monotone decreasing, and has limit zero. On the other hand, for every $t$ we have that
\begin{equation}\label{2 pi integral}
2\pi = \int_0^{2\pi} \partial_\theta\chi(t,\theta)d\theta.
\end{equation}
The monotone convergence theorem then immediately gives a contradiction, thus $E\neq \emptyset$.

Now to determine $l(\theta)$, exactly one of two scenarios are possible by Lemma \ref{ODELemma2} first observed in \cite{Sobol49}.
\begin{description}
\item[Scenario 1] Either $\int_0^{\infty}t\hat{c}(t,\theta)dt<+\infty$  and $l(\theta)>0$.
\item[Scenario 2] Or $\int_0^{\infty}t\hat{c}(t,\theta)dt=+\infty$ and $l(\theta)=0$.
\end{description}

\subsubsection{On the finiteness of the expanding set}\label{sub sec:cvg to steady state}

We now show that either the solution relaxes to the constant state or $E$ is finite. First we analyse the two possible scenarios.
\paragraph{\textbf{$\bullet$ Scenario 1.}}
\[
\exists \theta, \ \int_0^{+\infty}t\hat{c}(t,\theta)dt<+\infty,
\]
thus by Lemma \ref{elliptic estimate}
\[
\int_0^{+\infty}t\norm{\partial_\theta G}_{L^2}^2(t)dt<+\infty.
\]
Now noting that $\frac{d}{dt}\norm{\partial_\theta G}_{L^2}^2$ is uniformly bounded, $\partial_\theta G\underset{t\rightarrow +\infty}{\longrightarrow}0$ in $L^2$ thus $g$ converges to $\dashint_{\mathbb{S}^1}g_0$ in $H^{-1}$. Thus $\int_0^{+\infty}t\hat{c}(t,\theta)dt<+\infty$ is true for all $\theta$. In this case by Lemma \ref{ODELemma3} for all $\theta\in E$, $y(t,\theta)$ decreases to $l(\theta)>0$ and by \eqref{2 pi integral} we get that $|E|>0.$ The $L^p$ norms of $g$ being uniformly bounded imply that $g$ converges weakly to $\dashint_{\mathbb{S}^1}g_0$ in $L^p$ for all $1\leq p<+\infty$.

\paragraph{\textbf{$\bullet$ Scenario 2.}}
Henceforth we suppose 
\[
\forall \theta, \int_0^{+\infty}t\hat{c}(t,\theta)dt=+\infty,
\]
thus by Lemma \ref{ODELemma3} for all $\theta \in E$, $y(t,\theta)$ decreases to $0$. The ODE analysis implies that the decomposition $\mathbb{S}^1=E\cup C$ now reads:
 \[E:=\{\theta: \partial_\theta \chi(t,\theta)\nearrow\infty\},\qquad C:=\{\theta: \partial_\theta \chi(t,\theta)\rightarrow 0\}.\]
 Again by \eqref{2 pi integral}, we have that $|E|=0$ and $|C|=2\pi$. We now consider the case where $g$ does not converge weakly to a constant in $L^p$ for $p<+\infty$.  
 Then there exists a sequence of times $t_j\rightarrow\infty$ for which $\lim_{j\rightarrow \infty}\int(\partial_\theta G)^2|_{t=t_j}=c_0>0.$  We recall that by definition $E$ be the set of points $\theta\in\mathbb{S}^1$ for which $\partial_\theta G(t,\chi(t,\theta))\geq 0$ for all $t\geq 0.$ Assume now that $E$ contains $N$ distinct ordered points $\theta_1<\theta_2<\theta_3<...<\theta_N<\theta_{N+1}=\theta_1+\frac{2\pi}{m}.$
 \begin{lemma}\label{lem prf rlx thm:lower bound on E}
 Under the preceding assumptions, there exists $\delta(c_0, \norm{g_0}_{L^\infty})$
 \[\partial_\theta G(t_j,\chi(t_j,\theta_n))\geq \delta(c_0, \norm{g_0}_{L^\infty}).\]
 \end{lemma}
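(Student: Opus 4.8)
The plan is to read off the desired lower bound directly from the Riccati equation \eqref{eqn:riccati-euler} satisfied by $F=\partial_\theta G\circ\chi$, exploiting two facts: that $\theta_n\in E$ forces $F(\cdot,\theta_n)\ge 0$ for all $t$, and that the forcing $\hat c$ is quantitatively coercive at times near $t_j$ precisely because $\int(\partial_\theta G)^2$ is large there. The only real work is to upgrade the single-time lower bound $\int(\partial_\theta G)^2|_{t=t_j}\ge c_0/2$ into a lower bound valid on a whole time interval $[t_j,t_j+\tau_0]$, with $\tau_0$ independent of $j$.

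First I would collect the uniform bounds. Since the $L^p$ norms of $g$ are conserved, $\norm{g(t)}_{L^\infty}=\norm{g_0}_{L^\infty}$, and elliptic regularity for $(4+\partial_{\theta\theta})G=g$ gives $\norm{G}_{L^\infty}+\norm{\partial_\theta G}_{L^\infty}+\norm{\partial_{\theta\theta}G}_{L^\infty}+\norm{c}_{L^\infty}\le C(\norm{g_0}_{L^\infty})$; in particular $\abs{F}\le\Lambda:=\norm{\partial_\theta G}_{L^\infty}$ and $0\le\hat c\le M:=\norm{c}_{L^\infty}$. Next, differentiating $\tfrac12\int(\partial_\theta G)^2\,d\theta$ and using the identity $\partial_t\partial_\theta G=-2G\partial_{\theta\theta}G+(\partial_\theta G)^2-c$ derived above, every term on the right is a product of uniformly bounded quantities integrated over the compact circle, so $\abs{\tfrac{d}{dt}\int(\partial_\theta G)^2}\le B(\norm{g_0}_{L^\infty})$. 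Hence, for $j$ large enough that $\int(\partial_\theta G)^2|_{t_j}\ge c_0/2$, the bound $\int(\partial_\theta G)^2(s)\ge c_0/4$ holds for all $s\in[t_j,t_j+\tau_0]$ with $\tau_0:=c_0/(4B)$ independent of $j$.

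With the interval in hand, Lemma \ref{elliptic estimate} provides the pointwise coercivity: for $m\ge 5$ one has $\tilde C_m>0$ and $\hat c(s,\theta_n)=c(s,\chi(s,\theta_n))\ge \tilde C_m\,\dashint_{-\pi}^{\pi}(\partial_\theta G)^2(s)\ge \kappa$ for all $s\in[t_j,t_j+\tau_0]$, where $\kappa:=\tilde C_m c_0/(8\pi)>0$ depends only on $c_0,\norm{g_0}_{L^\infty}$ (and the fixed $m$). Now I argue by contradiction on the Riccati equation $\tfrac{d}{dt}F=F^2-\hat c$. Suppose $F(t_j,\theta_n)=\varepsilon$ with $\varepsilon<\sqrt{\kappa/2}$. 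As long as $0\le F<\sqrt{\kappa}$ we have $\tfrac{d}{dt}F\le F^2-\kappa<-\kappa/2$, so $F$ is strictly decreasing and stays below $\sqrt{\kappa/2}$; consequently $F(t_j+s,\theta_n)\le \varepsilon-(\kappa/2)s$, which vanishes at $s=2\varepsilon/\kappa$. If moreover $\varepsilon<\kappa\tau_0/2$, this zero occurs before $t_j+\tau_0$, forcing $F$ negative there and contradicting $F(\cdot,\theta_n)\ge 0$, the defining property of $E$. Therefore $F(t_j,\theta_n)=\partial_\theta G(t_j,\chi(t_j,\theta_n))\ge \delta$ with $\delta:=\min\{\sqrt{\kappa/2},\,\kappa\tau_0/2\}$, depending only on $c_0$ and $\norm{g_0}_{L^\infty}$.

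The step I expect to be delicate is the coercivity of $\hat c$ in the endpoint case $m=4$, where Lemma \ref{elliptic estimate} only gives $\tilde C_4=0$ and hence no uniform pointwise lower bound from the mean of $(\partial_\theta G)^2$. There one must instead use the sharper statement that $c_h(\theta)=0$ forces $h\equiv 0$, together with the kernel structure $C_m\abs{\sin(\tfrac m2\,\cdot)}$, to produce a lower bound for $\hat c$ precisely along the expanding trajectory $s\mapsto\chi(s,\theta_n)$; this presumably requires ruling out, by a compactness or continuity argument, that the mass of $(\partial_\theta G)^2$ concentrates exactly at the zeros of the kernel relative to $\chi(s,\theta_n)$. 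Everything else is a soft ODE comparison, so this degenerate coercivity at the expanding points is the crux.
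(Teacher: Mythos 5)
Your argument is essentially the paper's: propagate the lower bound on $\int(\partial_\theta G)^2$ forward in time using the a priori bound on its derivative, convert this into a pointwise lower bound on $\hat c$ along the trajectory via Lemma \ref{elliptic estimate}, and run the Riccati comparison $\frac{d}{dt}F=F^2-\hat c$ to show that a too-small value of $F(t_j,\theta_n)$ forces $F$ negative shortly after $t_j$, contradicting $\theta_n\in E$; the only cosmetic difference is that the paper keeps the linearly decaying bound $c\ge c_0-C_0(t-T)$ in its elementary ODE lemma rather than restricting to an interval where $c$ is bounded below by a constant. Your closing caveat about $m=4$ is well taken: the paper invokes the same coercivity from Lemma \ref{elliptic estimate} without comment even though $\tilde C_4=0$, so obtaining a uniform $\delta$ in that endpoint case genuinely requires exploiting the kernel term $C_m\left|\sin\left(\tfrac{m}{2}\cdot\right)\right|$ (or a quantitative version of the ``moreover'' clause), a point the paper leaves implicit.
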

Before establishing the Lemma, let us first show that it implies a uniform bound on $N$. Since the sets $C_k$ increase to $C$, which is of full measure, we may take $k$ sufficiently large so that \[\abs{C_k}\geq 2\pi -\frac{1}{2}\inf_n |\theta_n-\theta_{n+1}|.\] It follows that $C_k\cap (\theta_n,\theta_{n+1})\not\neq \emptyset$ for each $n.$ Fix points $\theta_{n}'\in C_k\cap (\theta_n,\theta_{n+1}).$ Observe that, by continuity, \[\chi(t,\theta_{n}')\in (\chi(t,\theta_n), \chi(t,\theta_{n+1}))\] for all $t\geq 0.$ Now, if $t\geq k,$
\[\partial_\theta G(t,\chi(t,\theta_n')))\leq 0,\] for every $n$ since $\theta_n'\in C_k.$ Now, for each $t,$ there exists $n$ so that \[|\chi(t,\theta_n)-\chi(t,\theta_{n}')|\leq |\chi(t,\theta_n)-\chi(t,\theta_{n+1})|\leq \frac{2\pi}{N}.\]
It follows that \[|\partial_\theta G(t,\chi(t,\theta_n))-\partial_\theta G(t,\chi(\theta_{n'},t))|\leq |\partial_{\theta\theta}G|_{L^\infty}|\chi(t,\theta_n)-\chi(t,\theta_{n+1})|\leq \frac{C\norm{g}_{L^\infty}}{N},\] for all $t\geq k$. On the other hand, for $j$ such that $t_j\geq k$ we have $\partial_\theta G(\chi(\theta_{n'},t_k),t_j)\leq 0$ and $\partial_\theta G(t_j,\chi(t_j,\theta_n))\geq \delta(c_0,\norm{g_0}_{L^\infty}).$
It follows that 
\[\delta(c_0,\norm{g_0}_{L^\infty})\leq \frac{C\norm{g_0}_{L^\infty}}{N}\Longrightarrow N\leq \frac{C\norm{g_0}_{L^\infty}}{\delta(c_0,\norm{g_0}_{L^\infty})}.\]
 \begin{proof}[Proof of Lemma \ref{lem prf rlx thm:lower bound on E}]
 First note that by definition 
 \[\partial_\theta G(t_j,\chi(t_j,\theta_n))\geq 0,\]
and the goal is get a uniform lower bound. We start from
\[\frac{d}{dt} \int_{\mathbb{S}^1}(\partial_\theta G)^2 \leq C(\norm{g_0}_{L^\infty}),\]
thus if $\int_{\mathbb{S}^1}(\partial_\theta G)^2|_{t=T}\geq c_0$ 
\[\int_{\mathbb{S}^1}(\partial_\theta G)^2\geq c_0-(t-T) C(\norm{g_0}_{L^\infty})\] for all $t\geq T.$
Thus by Lemma \ref{elliptic estimate} 
\[c(t,\theta_n)\geq \tilde{c}_0-(t-t_j) \tilde{C}(\norm{g_0}_{L^\infty})\] 
for all $t\geq t_j$. Finally we recall that
\[
\frac{d}{dt}\partial_\theta G(t_j,\chi(t_j,\theta_n))=\prt{\partial_\theta G(\chi(t,\theta_n),t)}^2-c(t,\theta_n).
\]
the result then follows from the following elementary observation.
 \end{proof}

\begin{lemma}
Fix $c_0<1$ and $C_0>1$. 
Assume that $c(t,\theta)\geq c_0-C_0 (t-T)$ for $t\geq T.$ Then, if $f$ satisfies:
\[\partial_t f=f^2-c\] and if $f(T)\leq \frac{c_0^2}{1000C_0},$ then $f(t)<0$ for some $t\in [T,T+\frac{c_0}{2C_0}].$  
\end{lemma}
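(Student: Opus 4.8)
The plan is to treat this as a purely scalar ODE comparison on the short window $[T,T+\frac{c_0}{2C_0}]$, exploiting that the linear-in-time lower bound for $c$ stays bounded below by a fixed positive constant there. After translating time so that $T=0$, observe that for $t\in[0,\frac{c_0}{2C_0}]$ we have $C_0 t\leq \frac{c_0}{2}$, so the hypothesis forces
\[
c(t)\geq c_0-C_0 t\geq \frac{c_0}{2}>0
\]
throughout the window. Hence on this interval the Riccati equation obeys $\partial_t f=f^2-c\leq f^2-\frac{c_0}{2}$: the restoring forcing dominates unless $f$ is already of order $\sqrt{c_0}$.

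First I would dispose of the trivial cases. If $f(0)<0$ we are done at $t=0$; if $f(0)=0$ then $\partial_t f(0)=-c(0)\leq -\frac{c_0}{2}<0$, so $f$ becomes negative immediately. The substantive case is $0<f(0)\leq \frac{c_0^2}{1000 C_0}$, where the point is that this initial value is so small that $f^2$ can never compete with the forcing. Indeed, using $0<c_0<1<C_0$, as long as $0\leq f\leq f(0)$ we have
\[
f^2\leq f(0)^2\leq \left(\frac{c_0^2}{1000 C_0}\right)^2\leq \frac{c_0}{10^6},
\]
and therefore $\partial_t f\leq \frac{c_0}{10^6}-\frac{c_0}{2}\leq -\frac{c_0}{4}$.

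Next I would run the standard first-passage argument. Since $\partial_t f<0$ whenever $f\in(0,f(0)]$, the solution cannot climb back to $f(0)$ and is strictly decreasing and confined to $(0,f(0)]$ until it first reaches $0$; call that time $t_*$. Integrating the bound $\partial_t f\leq -\frac{c_0}{4}$ on $[0,t_*)$ gives
\[
t_*\leq \frac{f(0)}{c_0/4}=\frac{4f(0)}{c_0}\leq \frac{4}{c_0}\cdot\frac{c_0^2}{1000 C_0}=\frac{c_0}{250 C_0}<\frac{c_0}{2C_0},
\]
so $f$ hits zero comfortably inside the window. At $t_*$ we have $f(t_*)=0$ and $\partial_t f(t_*)=-c(t_*)\leq -\frac{c_0}{2}<0$, so $f$ crosses strictly below zero just after $t_*$, still within $[0,\frac{c_0}{2C_0}]$, which is exactly the claim.

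I expect there to be no genuine analytic obstacle here; the only care needed is the bookkeeping that makes the constants line up. The generous factor $1000$ (together with $c_0<1$ and $C_0>1$) is what simultaneously guarantees that $f^2$ is negligible against the forcing $\frac{c_0}{2}$, so that the descent rate $\frac{c_0}{4}$ is legitimate, and that the resulting first-passage time $\frac{4f(0)}{c_0}$ lands strictly inside $[0,\frac{c_0}{2C_0}]$. The argument is robust: any sufficiently large constant in place of $1000$ would work, so the stated value is not sharp.
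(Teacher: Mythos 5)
Your argument is correct and follows essentially the same route as the paper's proof: bound $c\geq \frac{1}{2}c_0$ on the window, note that $f^2$ is negligible against this forcing while $f$ remains small so that $\partial_t f\leq -\frac{1}{4}c_0$, and integrate to force $f$ below zero well inside $[T,T+\frac{c_0}{2C_0}]$. The only differences are cosmetic: you dispose of the cases $f(T)\leq 0$ explicitly and phrase the conclusion via a first-passage time rather than evaluating at the endpoint of the interval.
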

\begin{proof}
On the time interval $[T,T+\frac{c_0}{2C_0}]$, we have that $c\geq \frac{1}{2}c_0.$ Thus, on this time interval
\[\partial_t f\leq f^2-\frac{1}{2}c_0.\] So long as $f(t)^2\leq \frac{1}{2}c_0,$ $f$ is decreasing. This is true for some short time by continuity. While it's true,
\[\partial_t f\leq \frac{1}{4}c_0^2-\frac{1}{2}c_0\leq -\frac{1}{4}c_0.\] Thus, 
\[f(T+t)\leq f(T)-\frac{1}{4}c_0(t-T)\leq \frac{c_0^2}{1000 C_0}-\frac{1}{4}c_0(t-T).\]
Taking $t=T+\frac{c_0}{2C_0},$ we get that $f$ must become negative. 
\end{proof}

\subsubsection{Relaxation to jump states when $g_0$ admits limits on $E$}

We consider without loss of generality we work in the case where $g$ does not relax to a constant and write $E=\{a_{0}=-\frac{\pi}{m}<a_2<a_4<\cdots<a_{2n}=\frac{\pi}{m}\}$, then we have the following.
\begin{lemma}\label{lem:prf rlx jump}
For $i\in (1,n)$ there exists $a_{2i-1}\in (a_{2(i-1)},a_{2i})$ such that for all $\theta \in (a_{2(i-1)},a_{2i})$ 
\[
\abs{\chi(t,\theta)-\chi(t,a_{2i-1})}\leq \frac{C(\norm{g_0}_{L^\infty})}{y(t,\theta)^2}\underset{t\rightarrow+\infty}{\longrightarrow}0,
\]
\end{lemma}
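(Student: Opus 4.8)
The plan is to work entirely with the factorization $\partial_\theta\chi(t,\theta)=y^{-2}(t,\theta)$ established above, together with the convexity of $t\mapsto y(t,\theta)$ and the Scenario 2 dichotomy ($y(t,\theta)\to\infty$ for $\theta\in C$, while $y(t,a_{2j})\to 0$ for the endpoints $a_{2j}\in E$). Writing the displacement as a Lagrangian integral,
\[
\chi(t,\theta)-\chi(t,a_{2i-1})=\int_{a_{2i-1}}^\theta \partial_\theta\chi(t,\sigma)\,d\sigma=\int_{a_{2i-1}}^\theta \frac{d\sigma}{y^2(t,\sigma)},
\]
the whole statement reduces to dominating this integral by the value of its integrand at the moving endpoint $\theta$, i.e.\ to proving
\[
\int_{a_{2i-1}}^\theta \frac{d\sigma}{y^2(t,\sigma)}\leq \frac{C(\norm{g_0}_{L^\infty})}{y^2(t,\theta)}.
\]
Once this is in hand the convergence to $0$ is immediate, since $\theta\in(a_{2(i-1)},a_{2i})\subset C$ forces $y(t,\theta)\to\infty$.

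The mechanism I would exploit is that, on each interval $I_i:=(a_{2(i-1)},a_{2i})$ bounded by two consecutive expanding points, $y(t,\cdot)$ is large in the interior and collapses to $0$ at the two endpoints, so for large $t$ it attains an interior maximum. I would define $a_{2i-1}$ as this maximizer (equivalently, the minimizer of the stretching factor $\partial_\theta\chi$), and the key structural claim is that $y(t,\cdot)$ is \emph{unimodal} on $\overline{I_i}$ — increasing up to $a_{2i-1}$ and decreasing afterwards — with the peak stabilizing to a single $t$-independent point. Granting unimodality, for $\theta>a_{2i-1}$ one has $y(t,\sigma)\geq y(t,\theta)$ for every $\sigma\in[a_{2i-1},\theta]$, hence $y^{-2}(t,\sigma)\leq y^{-2}(t,\theta)$ and the integral is bounded by $\abs{I_i}\,y^{-2}(t,\theta)\leq \tfrac{2\pi}{m}\,y^{-2}(t,\theta)$; the case $\theta<a_{2i-1}$ is symmetric. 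This yields the displayed bound with $C=2\pi/m$.

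The heart of the matter — and the step I expect to be the main obstacle — is the spatial monotonicity of $\partial_\theta\chi$ about the peak, namely that $\partial_\theta\chi(t,\cdot)$ has no spurious interior critical points and that the peak does not drift. The natural handle is the evolution of the logarithmic spatial derivative $Q:=\partial_\theta\log\partial_\theta\chi=-2\,\partial_\theta\log y$: using $\partial_t\log\partial_\theta\chi=2F$ with $F=\partial_\theta G\circ\chi$ and $\partial_\theta F=(\partial_{\theta\theta}G\circ\chi)\,\partial_\theta\chi$, together with $\partial_{\theta\theta}G=g-4G$ and $g\circ\chi=g_0$, one obtains $\partial_t Q=2\big(g_0(\theta)-4\,G(t,\chi(t,\theta))\big)\,\partial_\theta\chi$. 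Showing that this forcing produces exactly one sign change of $Q$ inside $I_i$ is where the real work lies, and it is here that the finiteness of $E$ and the uniform Lipschitz control of $G$ (hence of $\hat c$ via Lemma \ref{elliptic estimate}) must be combined.

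An alternative and possibly more robust route I would keep in reserve avoids exact monotonicity and instead targets a uniform comparability estimate $y(t,\theta)/y(t,\sigma)\leq C$ for $\sigma$ between $a_{2i-1}$ and $\theta$, which already suffices for the integral bound. This can be attempted by a bootstrap: the ratio $R=y(t,\theta)/y(t,\sigma)$ satisfies $\tfrac{d}{dt}\log R=F(t,\sigma)-F(t,\theta)$, and the difference $F(t,\sigma)-F(t,\theta)$ is in turn driven by $\hat c(t,\sigma)-\hat c(t,\theta)=c(t,\chi(t,\sigma))-c(t,\chi(t,\theta))$, which is small precisely because the two trajectories are close in physical space. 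Closing this loop — closeness controls the forcing, which controls the ratio, which controls the closeness — is delicate, but it sidesteps having to pin down the exact location of the peak.
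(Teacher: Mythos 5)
Your reduction of the claimed estimate to the comparison $y(t,\sigma)\geq y(t,\theta)$ for $\sigma$ between $a_{2i-1}$ and $\theta$ is the natural way to attack the quantitative bound, but the step you yourself flag as the heart of the matter --- that $y(t,\cdot)$ is unimodal on $\overline{(a_{2(i-1)},a_{2i})}$ with its peak frozen at a single time-independent point $a_{2i-1}$ --- is the entire content of the claim, and nothing in your proposal establishes it. There is no a priori reason for it to hold: $y(\cdot,\theta)$ is governed by $\partial_{tt}y=\hat c\,y$ with $y(0,\theta)=1$ and $\partial_t y(0,\theta)=-\partial_\theta G_0(\theta)$, and $\partial_\theta G_0$ may change sign many times inside a single interval $(a_{2(i-1)},a_{2i})$ (only the endpoints are constrained to lie in $E$), so at finite times $y(t,\cdot)$ can have many interior local maxima, and even asymptotically the maximizer may drift with $t$. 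Your identity $\partial_t Q=2\bigl(g_0(\theta)-4G(t,\chi(t,\theta))\bigr)\partial_\theta\chi$ is correct, but this forcing is not signed and does not visibly produce a single sign change of $Q$. The fallback bootstrap is also not closed: the closeness of trajectories is exactly what is to be proved, and the spatial Lipschitz bound on $c$ fed through Gronwall over an infinite time horizon does not yield a uniform ratio bound without an extra integrability input. So as written the proposal proves nothing beyond the reduction itself.

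It is worth comparing with what the paper actually does and actually needs. The paper's own proof is a single sentence: $a_{2i-1}$ is simply some interior point, and the justification offered is the pointwise convergence $\partial_\theta\chi(t,\cdot)\to 0$ on the open contracting interval; the quantitative rate $C/y(t,\theta)^2$ is not really derived there either. What the lemma is used for downstream is only the qualitative statement that $\abs{\chi(t,\theta)-\chi(t,a_{2i-1})}\to 0$ for each \emph{fixed} interior $\theta$, and that admits a soft proof which you should give instead: for fixed $\theta$ the compact interval $J$ with endpoints $a_{2i-1}$ and $\theta$ is contained in $C$; since $\partial_{tt}y=\hat c\,y$ with $\hat c>0$, the switching time $T_0$ is continuous on $C$ and hence bounded on $J$ by some $\bar T$; for $t\geq\bar T$ the integrand in
\[
\chi(t,\theta)-\chi(t,a_{2i-1})=\int_{J}\partial_\theta\chi(t,\sigma)\,d\sigma
\]
is decreasing in $t$ and tends to $0$ pointwise, so monotone (or dominated) convergence sends the integral to $0$. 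In short, you are aiming at a sharper, $\theta$-uniform rate than the argument requires, and the one step that would deliver it is missing.
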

\begin{proof}
The existence of $a_{2i-1}$ follows from the observation that $C_i=\cup_{i=1}^{2n}(a_{2(i-1)},a_{2i})$ and that $\partial_{\theta}\chi(t,\cdot)_{|(a_{2(i-1)},a_{2i})}$ converges point-wise to $0$.
\end{proof}
Fixing a test function $\psi\in C^\infty(\mathbb{S}^1)$ we compute
\begin{align*}
    &\int_{\mathbb{S}^1}g(t,\theta)\psi(\theta)d\theta=\int^{\chi^{-1}(t,\frac{\pi}{m})}_{\chi^{-1}(t,-\frac{\pi}{m})}g_0(\chi^{-1}(t,\theta))\psi(\theta)d\theta=\int^{\frac{\pi}{m}}_{-\frac{\pi}{m}}g_0(\theta)\psi(\chi(t,\theta))\partial_{\theta}\chi(t,\theta)d\theta\\
    &=\sum^{2n-1}_{i=1}\prt{\int^{a_{2i}}_{a_{2i}-\epsilon}+\int^{a_{2i}+\epsilon}_{a_{2i}}}g_0(\theta)\psi(\chi(t,\theta))\partial_{\theta}\chi(t,\theta)d\theta+O\prt{\sup_i \abs{\chi(t,a_{2i\pm1})-\chi(t,a_{2i}\pm\epsilon)}}\\
   &+\prt{\int^{a_{2n}}_{a_{2n}-\epsilon}+\int^{a_{0}+\epsilon}_{a_{0}}}g_0(\theta)\psi(\chi(t,\theta))\partial_{\theta}\chi(t,\theta)d\theta\\
     &=\sum^{2n-1}_{i=1}\prt{\prt{\lim_{\epsilon'\rightarrow 0}g_0(a_{2i}-\epsilon')}\int^{a_{2i}}_{a_{2i}-\epsilon}+\prt{\lim_{\epsilon'\rightarrow 0}g_0(a_{2i}+\epsilon')}\int^{a_{2i}+\epsilon}_{a_{2i}}}\psi(\chi(t,\theta))\partial_{\theta}\chi(t,\theta)d\theta\\
    &+\prt{\prt{\lim_{\epsilon'\rightarrow 0}g_0(a_{2n}-\epsilon')}\int^{a_{2n}}_{a_{2n}-\epsilon}+\prt{\lim_{\epsilon'\rightarrow 0}g_0(a_{0}+\epsilon')}\int^{a_{0}+\epsilon}_{a_{0}}}\psi(\chi(t,\theta))\partial_{\theta}\chi(t,\theta)d\theta\\
    &+O\prt{\delta_\epsilon+\sup_i \abs{\chi(t,a_{2i\pm1})-\chi(t,a_{2i}\pm\epsilon)}}.
\end{align*}
which gives the desired result by first passing to the limit in time then $\epsilon$ and and finally changing variables.
\subsubsection{Relaxation to the average when $E$ contains a continuity point}
We suppose by contradiction that $g$ does not converges weakly to a constant. From the previous construction if $E$ contains a continuity point $a_{2i-1}$ for $i\in \{1,\cdots,n\}$, then $g$ converges to a jump profile that is constant on $(a_{2(i-1)},a_{2i})$ with $\partial_\theta G(t,\chi(t,a_{2(i-1)}))$ and $\partial_\theta G(t,\chi(t,a_{2i}))$ are strictly positive by Lemma \ref{lem prf rlx thm:lower bound on E}. As the next Lemma \ref{lem:prf rlx cst c0} shows this implies that $\partial_\theta G(t,\chi(t,\theta))$ is always positive on $(a_{2(i-1)},a_{2i})\subset C$ thus $\partial_\theta G(t,\chi(t,\cdot))$ converges to $0$ on $(a_{2(i-1)},a_{2i})$ and by continuity on $[a_{2(i-1)},a_{2i}]$ which is a contradiction.
\begin{lemma}\label{lem:prf rlx cst c0}
Assume that $\partial_{\theta\theta}G+4G=g_i$ for some constant $g_i\in \mathbb{R}$ on an interval $(b_{i-1}=\chi(t,a_{2(i-1)}),b_i=\chi(t,a_{2i}))$ of length less than $\frac{\pi}{4}$, then if $\partial_\theta G(b_{i-1})$ and $\partial_\theta G(b_{i})$ are strictly positive then $\partial_\theta G$ is strictly positive on $(b_{i-1},b_i)$.
\end{lemma}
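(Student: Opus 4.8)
The plan is to reduce the statement to an elementary fact about the constant-coefficient harmonic oscillator. Writing $H:=\partial_\theta G$ on the open interval $I=(b_{i-1},b_i)$, I would differentiate the relation $\partial_{\theta\theta}G+4G=g_i$ once in $\theta$. Since $g_i$ is a \emph{constant}, this kills the right-hand side and yields
\[
H''+4H=0 \quad\text{on } I,
\]
where I abbreviate $H'=\partial_\theta H$. Thus $H$ solves the oscillator of frequency $2$, so $H(\theta)=R\cos(2\theta-\varphi)$ for some amplitude $R\ge 0$ and phase $\varphi$. The key structural consequence is that either $H\equiv 0$, or $H$ has only simple, isolated zeros with consecutive zeros spaced exactly $\pi/2$ apart (the half-period of $\cos(2\,\cdot\,)$). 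The degenerate case is excluded immediately: if $H\equiv 0$ then $\partial_\theta G(b_{i-1})=0$, contradicting the hypothesis $\partial_\theta G(b_{i-1})>0$; hence $R>0$ and all zeros of $H$ are simple.

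Next I would track the first zero to the right of $b_{i-1}$. Since $H(b_{i-1})>0$, set $\theta_*:=\inf\{\theta\in I: H(\theta)=0\}$. If no such zero exists (i.e.\ $\theta_*\ge b_i$), then $H$ does not vanish on $I$ and, being positive at $b_{i-1}$, stays positive throughout, which is the claim. Otherwise $H(\theta_*)=0$; by simplicity of the zero $H'(\theta_*)\neq 0$, and since $H>0$ on $[b_{i-1},\theta_*)$ we must have $H'(\theta_*)<0$, so $H$ turns negative immediately to the right of $\theta_*$. Because consecutive zeros are $\pi/2$ apart, $H$ cannot return to a positive value before $\theta_*+\pi/2$. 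But $H(b_i)>0$ then forces $b_i\ge \theta_*+\pi/2>b_{i-1}+\pi/2$, so $|I|\ge \pi/2$, contradicting $|I|<\pi/4$. Therefore no interior zero exists and $H=\partial_\theta G>0$ on all of $I$.

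The argument is short, and I do not expect a genuine obstacle once the right reduction is identified. The only quantitative input is the $\pi/2$ spacing of the zeros of a frequency-$2$ oscillator, and the hypothesis $|I|<\pi/4$ (in fact any bound $<\pi/2$ would suffice) is exactly what prevents the sinusoid from dipping below zero and coming back within $I$. The one step worth flagging is the opening move: differentiating the inhomogeneous elliptic relation to remove the constant forcing, which converts $\partial_\theta G$ into a pure oscillator. Once that reduction is in hand, the sign analysis is automatic.
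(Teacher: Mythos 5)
Your proof is correct. It rests on the same core reduction as the paper's: on an interval where $g$ is constant, $\partial_\theta G$ solves the constant-coefficient oscillator $H''+4H=0$ and is therefore an explicit frequency-$2$ sinusoid. Where you diverge is in the final sign analysis. The paper writes the general solution as $G=\frac{g_i}{4}+\frac{A}{2}\sin(2(\theta-b_{i-1}))+\frac{B}{2}\sin(2(\theta-b_i))$, translates the endpoint positivity into algebraic inequalities on the coefficients $A,B$, and concludes interior positivity from the strict monotonicity of $\cot$ on $(0,\frac{\pi}{4})$. You instead run a Sturm-type argument: a nontrivial solution of $H''+4H=0$ has only simple zeros spaced exactly $\frac{\pi}{2}$ apart, so a first interior zero would force $H<0$ on a window of length $\frac{\pi}{2}$, incompatible with $H(b_i)>0$ on an interval of length $<\frac{\pi}{4}$. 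Your route buys two things: it avoids the coefficient bookkeeping entirely (the paper's displayed positivity conditions as written contain apparent typos, e.g.\ a term $B\sin(2(b_i-b_i))$ that vanishes identically), and it shows the hypothesis can be relaxed to any interval of length $<\frac{\pi}{2}$, which is the sharp threshold for this oscillator. The paper's version, on the other hand, produces the explicit formula for $G$ on the interval, which is reused in the classification of steady states. Both are complete proofs of the stated lemma.
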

\begin{proof}
We write 
\[
G=\frac{g_i}{4}+\frac{A}{2}\sin(2(\theta-b_{i-1}))+\frac{B}{2}\sin(2(\theta-b_{i})),
\]
the positivity conditions then reads
\[
A>0 \text{ and } A\sin(2(b_i-b_{i-1}))+B\sin(2(b_i-b_{i}))>0
\]
since $b_i-b_{i-1}<\frac{\pi}{4}$ this last condition is equivalent to 
\[
A\cot(2(b_i-b_{i-1}))+B>0.
\]
The result then follows by observing that $\cot$ is strictly increasing on $(0,\frac{\pi}{4})$ which gives for $\theta\in (b_{i-1},b_i)$
\[
A\cot(2(\theta-b_{i-1}))+B>0.
\]
\end{proof}
\subsection{Proof of Theorem \ref{Steady states of the zero homogeneous Euler equation}}
\begin{proof}
For this we study the steady equation
\[
G\partial_\theta g=0 \text{ with } 4G+\partial_{\theta \theta}G=g.
\]
\paragraph{$\bullet$ The Riccati structure and detailed analysis of the expanding and contracting sets.} 
The equation on $\partial_\theta G$ is again given by
\[
  2G\partial_\theta \partial_\theta G-(\partial_\theta G)^2+3\dashint (\partial_\theta G)^2
+\frac{12}{\partial_{\theta \theta}+4} \overline{(\partial_\theta G)^2}=0.
\]
Now define the steady flow map
\[
\frac{d}{dt}\chi=2G\circ \chi, \ \  \chi(0,\cdot)=Id.
\]
Of course for steady states we have
\[
g(\chi(t,\theta))=g(\theta).
\]
Now the exact same analysis as in the non steady case gives by the uniqueness 0f the weak limit that either $g$ is constant or the expanding set is finite and $g$ is piecewise constant with a finite number of jumps. 
\paragraph{$\bullet$ Non trivial Steady states.}
In this case the previous analysis gives that the contracting set $C=\cup_{1\leq i\leq 2n}C_i$ where $C_i$ are the open connected components of $C$ and $\abs{C}=2\pi$. Moreover $g$ is constant on $C_i$, and we denote $g_i$ this constant, multiplying 
\[
4G+\partial_{\theta \theta}G=g,
\] 
by $\partial_\theta G$ and integrating between the lower boundary $\underline{C_i}$ and upper boundary $\overline{C_i}$ of $C_i$ we get
\[
\partial_\theta G(\underline{C_i})^2=\partial_\theta G(\overline{C_i})^2.
\]
Moreover $\partial_{\theta \theta}G$ being signed implies $\sgn \prt{\partial_\theta G(\underline{C_i})}=-\sgn \prt{\partial_\theta G(\overline{C_i})}$, thus the jumps occur at the global minima and maxima of $\partial_\theta G$ and they have exactly opposite values. Observe also that it follows that 
\begin{equation}\label{steady state balance}
\text{if } \exists i, \ g_i=0, \text{ then }\partial_\theta G(a_i)=0 \Longrightarrow \forall j, \ g_j=0. 
\end{equation}
Thus for non trivial steady states $g$ necessarily jumps from non a zero value to another non zero value.
\paragraph{$\bullet$ Steady states with a finite number of jumps: a system of ODEs}
We consider a sequence of points $a_0=-\frac{\pi}{m}< a_1<\cdots<a_{n-1}<a_n=\frac{\pi}{m}$ and a sequence of real numbers $(g_i)_{1\leq i \leq {2n}}\in \mathbb{R}$ that defines for $\theta \in (-\frac{\pi}{m},\frac{\pi}{m})$ a piecewise constant profile given by 
\[
g_0(\theta)=\sum^{2n}_{i=1}g_i\mathbbm{1}_{(a_{i-1},a_i)}(\theta).
\]
We extend $g_0$ to $[-\pi,\pi]$ by m-fold symmetry.
The solution to the system 
\[
\begin{cases}
\partial_t g+2G\partial_\theta g=0\\
4G+\partial_{\theta \theta}G=g
\end{cases}, \text{ with } g(0,\cdot)=g_0, \text{ is given by }
\]
\[
g(t,\theta)=\sum^{2n}_{i=1}g_i\mathbbm{1}_{(a_{i-1}(t),a_i(t))}(\theta)
\]
where $a_i(t)$ verify a system of ODEs that we will write explicitly. First we compute
\begin{align*}
G(t,\theta)&=\frac{C_m m}{2\pi}\int_{-\frac{\pi}{m}}^{\frac{\pi}{m}}\left|\sin\left(\frac{m}{2}(\theta-\theta')\right)\right|g(\theta')d\theta'+\frac{\tilde{C}_m m}{2\pi}\int_{-\frac{\pi}{m}}^{\frac{\pi}{m}}g(\theta')d\theta'\\
&=\frac{C_m m}{2\pi}\sum^{2n}_{i=1}g_i\int_{a_{i-1}}^{a_i}\sin\left(\frac{m}{2}(\theta-\theta')\right)\prt{\mathbbm{1}_{\theta\geq \theta'}-\mathbbm{1}_{\theta\leq \theta'}}d\theta'+\frac{\tilde{C}_m m}{2\pi}\sum^{2n}_{i=1}g_i(a_i-a_{i-1})
\end{align*}
We will only need to know $G$ on $a_j(t)$ for $j\in \{1,\cdots,{2n}\}$ thus
\begin{equation*}
G(t,a_j(t))=\frac{C_m }{\pi}\sum^{2n}_{i=1}g_i\abs{\cos\left(\frac{m}{2}(a_{i}(t)-a_j(t))\right)-\cos\left(\frac{m}{2}(a_{i-1}(t)-a_j(t))\right)}+\frac{\tilde{C}_m m}{2\pi}\sum^{2n}_{i=1}g_i(a_i(t)-a_{i-1}(t))
\end{equation*}
We also compute.
\[
\partial_\theta g(t,\theta)=-\sum^{2n}_{i=1}(g_{i+1}-g_i)\delta_{a_{i}(t)}(\theta),
\]
\[
\partial_t g(t,\theta)=\sum^{2n}_{i=1}(g_{i+1}-g_i)a'_i(t)\delta_{a_{i}(t)}(\theta).
\]
Thus we get the system for $i\in \{1,\cdots,{2n}\}$
\[
(g_{i+1}-g_i)a'_i(t)=2(g_{i+1}-g_i)G(t,a_{i}(t)),\ a_{i}(0)=a_{i},
\]
which for $j\in \{1,\cdots,{2n}\}$ reads, with initially $a_j(0)=a_j$,
\begin{multline*}
a'_j(t)=\frac{2C_m }{\pi}\sum^{2n}_{i=1}g_i\abs{\cos\left(\frac{m}{2}(a_{i}(t)-a_j(t))\right)-\cos\left(\frac{m}{2}(a_{i-1}(t)-a_j(t))\right)}+\frac{\tilde{C}_m m}{\pi}\sum^{2n}_{i=1}g_i(a_i(t)-a_{i-1}(t))
\end{multline*}
Steady states then necessarily verify, $a_0=-\frac{\pi}{m}<a_1<\cdots<a_{n-1}<a_n=\frac{\pi}{m}$ and for $j\in \{1,\cdots,2n\}$
\begin{equation*}
\frac{2C_m }{\pi}\sum^{2n}_{i=1}g_i\abs{\cos\left(\frac{m}{2}(a_{i}(t)-a_j(t))\right)-\cos\left(\frac{m}{2}(a_{i-1}(t)-a_j(t))\right)}+\frac{\tilde{C}_m m}{\pi}\sum^{2n}_{i=1}g_i(a_i(t)-a_{i-1}(t))=0
\end{equation*}
\paragraph{$\bullet$ Uniqueness of steady states}\begin{lemma}
System \eqref{System for Steady states} admits at most one steady state solution. Moreover if the $(g_i)$ do not \underline{strictly} alternate sign then the only possible steady state is $0$. 
\end{lemma}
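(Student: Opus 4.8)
The plan is to reduce the global condition encoded in \eqref{System for Steady states} --- namely $G(a_j)=0$ at every jump point --- to a decoupled family of two-point boundary value problems. Restricted to a constancy interval $(a_{i-1},a_i)$ of length $L_i:=a_i-a_{i-1}$, the potential $G$ solves $\partial_{\theta\theta}G+4G=g_i$ with $G(a_{i-1})=G(a_i)=0$. Because $\sum_i L_i=2\pi/m\leq \pi/2$ when $m\geq 4$, each $L_i<\pi/2$, so this BVP is non-resonant and uniquely solvable; an explicit solve (writing $G=\frac{g_i}{4}+A\cos 2\theta+B\sin 2\theta$ and eliminating $A,B$) gives the one-sided derivatives
\[
\partial_\theta G(a_{i-1}^+)=-\frac{g_i}{2}\tan L_i,\qquad \partial_\theta G(a_i^-)=+\frac{g_i}{2}\tan L_i.
\]
I would isolate this endpoint-derivative identity as the key computational step.

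Imposing that the unique $m$-fold symmetric $G$ be $C^1$ across each jump (and $\frac{2\pi}{m}$-periodic at $\theta=\pm\frac{\pi}{m}$) then amounts to matching these one-sided derivatives, which yields the cyclic relations $g_i\tan L_i=-g_{i+1}\tan L_{i+1}$ for $i=1,\dots,2n$ (indices mod $2n$). The second assertion of the lemma is then immediate: for a nontrivial steady state all $g_i\neq 0$ by \eqref{steady state balance}, and since $\tan L_i,\tan L_{i+1}>0$ the matching relation forces $\sgn g_i=-\sgn g_{i+1}$. Hence the $g_i$ must strictly alternate in sign, and if they do not, the only steady state is $g\equiv 0$.

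For uniqueness, I would read the matching relations as the statement that $\abs{g_i}\tan L_i$ is independent of $i$, equal to some common $\lambda>0$, so that $L_i=\arctan(\lambda/\abs{g_i})$. The single remaining constraint is $\sum_{i=1}^{2n}L_i=2\pi/m$, that is $\Phi(\lambda):=\sum_{i=1}^{2n}\arctan(\lambda/\abs{g_i})=2\pi/m$. Since $\Phi$ is continuous and strictly increasing on $(0,\infty)$ with $\Phi(0^+)=0$ and $\Phi(+\infty)=n\pi>2\pi/m$, exactly one $\lambda$ is admissible; it fixes every $L_i$, and hence all positions $a_i$ once $a_0=-\frac{\pi}{m}$ is prescribed. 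Thus the prescribed values $(g_i)$ determine at most one steady configuration.

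The main obstacle is the reduction itself in the first step: one must verify that gluing the zero-boundary BVP solutions reproduces exactly the unique $m$-fold symmetric solution of the elliptic problem, the point being that $C^1$-matching at the interior jumps together with matching at the boundary $\pm\frac{\pi}{m}$ is equivalent both to \eqref{System for Steady states} and to periodicity. Checking that the boundary relation $g_{2n}\tan L_{2n}=-g_1\tan L_1$ is precisely the periodicity of $\partial_\theta G$ closes this loop. Once this equivalence is in hand, everything rests on the positivity of $\tan L_i$ (where the hypothesis $m\geq 4$ is used) and the strict monotonicity of $\Phi$, both elementary.
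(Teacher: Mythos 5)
Your proof is correct and follows essentially the same route as the paper: solve the boundary value problem on each constancy interval with $G$ vanishing at the jump points, derive the $C^1$-matching relation $g_i\tan L_i=-g_{i+1}\tan L_{i+1}$, and read off the strict sign alternation from the positivity of $\tan$ on $(0,\pi/2)$ (which is where $m\geq 4$ enters). Your uniqueness step via the strictly increasing function $\Phi(\lambda)=\sum_{i}\arctan(\lambda/|g_i|)$ is a more explicit rendering of the paper's terser observation that all the $a_i$ are determined by $a_1-a_0$ together with the constraint $a_{2n}-a_0=\frac{2\pi}{m}$.
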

\begin{proof}
Computing $G$ on $(a_{i-1},a_i)$ and $(a_{i},a_{i+1})$ we get 
\[
G(\theta)=g_{i}\prt{1+\frac{\sin\prt{2(\theta-a_i)}-\sin\prt{2(\theta-a_{i-1})}}{\sin\prt{2(a_i-a_{i-1})}}} \text{ on } (a_{i-1},a_i),
\]
\[
G(\theta)=g_{i+1}\prt{1+\frac{\sin\prt{2(\theta-a_{i+1})}-\sin\prt{2(\theta-a_{i})}}{\sin\prt{2(a_{i+1}-a_{i})}}} \text{ on } (a_{i},a_{i+1}).
\]
Imposing $G$ being $C^1$ at $a_i$ we get 
\[
g_i\tan(a_i-a_{i-1})=-g_{i+1}\tan(a_{i+1}-a_{i}),
\]
thus outside the $0$ state $g_i$ have to strictly change signs. Moreover we note that all of the $a_i$ are completely determined by $a_1-a_0$. Thus the identity $a_{2n}-a_0=\frac{2\pi}{m}$ impose uniqueness.
\end{proof}
\end{proof}

\section{Application to spiral formation on the Sphere}\label{sec:appl sphere}
\subsection{The equations}
We start by writing the Euler equations on the sphere in spherical coordinates. First in velocity form we have
\begin{equation*}
\partial_t u+(u\cdot \nabla_{\mathbb{S}^2})u=-\nabla_{\mathbb{S}^2} p \text{ with } \div(u)=0.
\end{equation*}
Defining the vorticity  such that $d_{\mathbb{S}^2}(u,\cdot)_{\mathbb{S}^2}=\omega d_{\mathbb{S}^2}V$ where $d_{\mathbb{S}^2}$ and $(\cdot,\cdot)_{\mathbb{S}^2}$ are the exterior derivative and the first fundamental form on $\mathbb{S}^2$ . Then 
\begin{equation*}
\partial_t \omega+u\cdot \nabla_{\mathbb{S}^2} \omega =0
\text{ and }
u=\nabla_{\mathbb{S}^2} e_r \times \psi \ \text{ with } \ \Delta_{\mathbb{S}^2} \psi =\omega,
\end{equation*}
where $\psi$ is the stream function.
Choosing the convention with $r>0$, $\theta \in (0,\pi)$, $\varphi \in [0,2\pi)$ and


\[
r=\sqrt{x^2+y^2+z^2}, \ \theta=\arccos\prt{\frac{z}{r}}, \ \varphi=\arctan_2\prt{\frac{y}{x}}.
\]
Then the resulting basis of vectors reads
\[
\begin{cases}
e_r=\frac{x}{r}e_x+\frac{y}{r}e_y+\frac{z}{r}e_z, \\
e_\theta=\frac{xz}{r}\frac{1}{\sqrt{x^2+y^2}}e_x+\frac{yz}{r}\frac{1}{\sqrt{x^2+y^2}}e_y-\frac{\sqrt{x^2+y^2}}{r}e_z, \\
e_\varphi=-\frac{y}{\sqrt{x^2+y^2}}e_x+\frac{x}{\sqrt{x^2+y^2}}e_y.
\end{cases}\]

\begin{lemma}\label{Sphere:spherical coord}
In the previous convention we have 
\[
e_r\times e_\theta=e_\phi, \ \ e_{\theta}\times e_\varphi=e_r , \ \ e_{\varphi}\times e_r=e_\theta, \\ \nabla_{\mathbb{S}^2}\cdot =\begin{pmatrix}
0 \\
\ \ \partial_\theta\cdot \\
\frac{1}{\sin(\theta)}\partial_\varphi\cdot 
\end{pmatrix}_{(e_r,e_\theta,e_\varphi)},
\]
\[
e_r \times \nabla_{\mathbb{S}^2} \cdot =\nabla^\perp_{\mathbb{S}^2}\cdot =\begin{pmatrix}
0 \\
\ \ -\frac{1}{\sin(\theta)}\partial_\varphi\\
\partial_\theta
\end{pmatrix}_{(e_r,e_\theta,e_\varphi)} \text{ and } \Delta_{\mathbb{S}^2}\cdot  =\frac{1}{\sin(\theta)}\partial_\theta\prt{\sin(\theta)\partial_\theta \cdot  }+\frac{1}{\sin(\theta)^2}\partial_\varphi^2 \cdot 
\]
\end{lemma}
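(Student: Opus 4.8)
The plan is to treat this as a routine computation in orthogonal curvilinear coordinates. The organizing observation is that $(e_r,e_\theta,e_\varphi)$ is a positively oriented orthonormal frame and that $(\theta,\varphi)$ are orthogonal coordinates on the sphere with Lamé coefficients $h_\theta=1$ and $h_\varphi=\sin\theta$; once these two facts are in hand, all four identities follow from the standard formulas for the gradient, its rotation, and the Laplace--Beltrami operator in such coordinates.

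First I would verify the cross-product relations. From the explicit expressions for $e_r,e_\theta,e_\varphi$ in terms of $(e_x,e_y,e_z)$ one checks by inspection that the three vectors are mutually orthogonal and of unit length, so they form an orthonormal frame. It then suffices to compute a single cross product, say $e_r\times e_\theta$, directly from the coordinate formulas and confirm that it equals $e_\varphi$; the other two relations $e_\theta\times e_\varphi=e_r$ and $e_\varphi\times e_r=e_\theta$ follow because the frame is thereby positively oriented and the cross product is cyclic on a right-handed orthonormal triple.

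Next I would record the induced metric. Writing the embedding $x=r\sin\theta\cos\varphi$, $y=r\sin\theta\sin\varphi$, $z=r\cos\theta$ and differentiating, one finds $\partial_\theta(x,y,z)=r\,e_\theta$ and $\partial_\varphi(x,y,z)=r\sin\theta\,e_\varphi$, whence the first fundamental form on the unit sphere is $d\theta^2+\sin^2\theta\,d\varphi^2$; in particular the coordinates are orthogonal with $h_\theta=1$ and $h_\varphi=\sin\theta$. The gradient formula $\nabla_{\mathbb{S}^2}f=(\partial_\theta f)\,e_\theta+\tfrac{1}{\sin\theta}(\partial_\varphi f)\,e_\varphi$ is then the general expression $\nabla f=\sum_i h_i^{-1}(\partial_{u_i}f)\,\hat e_i$ for orthogonal coordinates, the $e_r$-component vanishing because $f$ is a function on the sphere. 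Applying $e_r\times(\cdot)$ and using $e_r\times e_\theta=e_\varphi$ together with $e_r\times e_\varphi=-e_\theta$ (the latter from $e_\varphi\times e_r=e_\theta$) yields exactly the stated $\nabla^\perp_{\mathbb{S}^2}$.

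Finally, for the Laplace--Beltrami operator I would use $\Delta f=\frac{1}{\sqrt{g}}\,\partial_i\!\left(\sqrt{g}\,g^{ij}\partial_j f\right)$ with $\sqrt{g}=\sin\theta$, $g^{\theta\theta}=1$, and $g^{\varphi\varphi}=\sin^{-2}\theta$, which gives at once $\frac{1}{\sin\theta}\partial_\theta\!\left(\sin\theta\,\partial_\theta f\right)+\frac{1}{\sin^2\theta}\partial_\varphi^2 f$. There is no genuine obstacle in this lemma: the whole statement is bookkeeping. The only points requiring care are fixing the orientation consistently in the cross-product step, so that the signs in $\nabla^\perp_{\mathbb{S}^2}$ come out correctly, and being consistent about normalizing to the unit sphere when passing from the embedding derivatives to the scale factors.
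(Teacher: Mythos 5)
Your proposal is correct: the lemma is a standard bookkeeping statement that the paper itself states without proof, and your verification via the orthonormality and right-handedness of $(e_r,e_\theta,e_\varphi)$, the Lam\'e coefficients $h_\theta=1$, $h_\varphi=\sin\theta$, and the general formulas for the gradient, its rotation by $e_r\times(\cdot)$, and the Laplace--Beltrami operator in orthogonal coordinates is exactly the routine computation one would carry out. The signs in $\nabla^\perp_{\mathbb{S}^2}$ come out as stated since $e_r\times e_\varphi=-e_\theta$, so there is nothing to add.
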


Thus for $\theta \in (0,\pi)$ and $\varphi \in [0,2\pi)$ the 2d Euler equations on the sphere read

\begin{equation}\label{eq:2d E on the sphere}
\begin{cases}
    \partial_t \omega+u_\theta \partial_\theta \omega+\frac{u_\varphi}{\sin(\theta)}\partial_\varphi \omega=0\\
    \frac{1}{\sin(\theta)}\partial_\theta\prt{\sin(\theta)\partial_\theta \psi}+\frac{1}{\sin(\theta)^2}\partial_\varphi^2 \psi=\omega\\
    u_\theta=-\frac{1}{\sin(\theta)}\partial_\varphi \psi, \ \   u_\varphi=\partial_\theta \psi
    \end{cases}.
\end{equation}
\subsection{The rotation around the poles in $m$-fold symmetry with $m\geq 3$}
Keeping the conventions of the previous subsection we will denote by the subscript $m$ the space of functions on $\mathbb{S}^2$ that are $m-$fold symmetric with respect to the $z$ axis. This a discrete symmetry propagated by the Euler flow.
\begin{proposition}\label{prop: movement of poles}
Consider $\omega_0\in L^\infty_m(\mathbb{S}^2)$ and the associated unique global solution $\omega \in C_w\prt{\Rr,L_m^{\infty}(\mathbb{S}^2)}$ of \eqref{eq:2d E on the sphere}. Suppose that there exists $(g_0^N, g_0^S)\in L^\infty_m(\mathbb{S}^1)^2$ such that for almost every $\varphi\in[0,2\pi)$
\[
\lim_{\theta\rightarrow 0}\omega_0(\theta,\varphi)=g_0^N(\varphi) \text{ and } \lim_{\theta\rightarrow \pi}\omega_0(\theta,\varphi)=g_0^S(\varphi),
\]
while $g_0^N, g_0^S\in W^{1,\infty}(\mathbb{S}^1).$
Then there exist $g^N, g^S\in C_w\prt{\Rr,W^{1,\infty}_m(\mathbb{S}^1)}$ such that for all $t\in \Rr$ and almost every $\varphi\in[0,2\pi)$
\[
\lim_{\theta\rightarrow 0}\omega(t,\theta,\varphi)=g^N(t,\varphi) \text{ and } \lim_{\theta\rightarrow \pi}\omega(t,\theta,\varphi)=g^S(t,\varphi).
\]
Moreover $g^N$ solves \eqref{0 homogeneous limit system with initial data} with initial data $g^N_0$ forward in time and $g^S$ solves \eqref{0 homogeneous limit system with initial data} with data $g^S_0$ backward in time. 
\end{proposition}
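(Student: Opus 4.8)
The plan is to show that near each pole the spherical Biot--Savart law of \eqref{eq:2d E on the sphere} degenerates, at leading order, to the flat scale-invariant law \eqref{SIBSLaw}, so that the azimuthal motion of particles close to a pole is governed by the zero-homogeneous system \eqref{0 homogeneous limit system with initial data}. Throughout, I use the geodesic radial coordinate $\theta$ at the north pole and $\rho=\pi-\theta$ at the south pole; near either pole the metric is asymptotically Euclidean, $\sin\theta=\theta+O(\theta^3)$, so $\theta$ (resp.\ $\rho$) plays the role of the planar radius $r$ while $\varphi$ plays the role of the polar angle in the scale-invariant picture. The orientation reversal between the two poles, which is what turns ``forward'' into ``backward'' in time, will come out of the single relation $\partial_\theta=-\partial_\rho$.

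The first step extracts the leading-order stream function. Fix $t$ and suppose momentarily that the radial trace $g^N(t,\cdot):=\lim_{\theta\to0}\omega(t,\theta,\cdot)$ exists in $W^{1,\infty}_m(\mathbb{S}^1)$. I would split $\omega(t,\cdot)$ near the pole into the zero-homogeneous part carrying the trace and a remainder that vanishes at the pole, and split $\psi$ accordingly. Using the Green's function of $\Delta_{\mathbb{S}^2}$ from Lemma \ref{Sphere:spherical coord} and comparing it near the pole with the flat kernel $\tfrac{1}{\partial_{\varphi\varphi}+4}$, the trace part produces $\theta^2G^N(t,\varphi)$ with $(4+\partial_{\varphi\varphi})G^N=g^N$, while the remainder contributes only $o(\theta^2)$. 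The crucial point is that the degree-two homogeneous piece of $\psi$ at the pole is unambiguous exactly because the kernel of $4+\partial_{\varphi\varphi}$ (the modes $\cos2\varphi,\sin2\varphi$) is excluded under $m$-fold symmetry with $m\geq3$; this is precisely where $m\geq3$ enters and why the global, non-decaying part of $\omega$ cannot affect the leading coefficient. Differentiating the expansion $\psi=\theta^2G^N+o(\theta^2)$ and using $u_\varphi=\partial_\theta\psi$, $u_\theta=-\tfrac{1}{\sin\theta}\partial_\varphi\psi$, I obtain the velocity asymptotics
\[
\frac{u_\varphi}{\sin\theta}\xrightarrow[\theta\to0]{}2G^N(t,\varphi),\qquad \frac{u_\theta}{\theta}\xrightarrow[\theta\to0]{}-\partial_\varphi G^N(t,\varphi),
\]
locally uniformly in $\varphi$.

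Next I would propagate this in time and identify $g^N$ with a solution of the scale-invariant system. The velocity asymptotics show that the Lagrangian flow factorizes, to leading order, into the angular flow $\dot\varphi=2G^N(t,\varphi)$ and a purely multiplicative radial flow $\dot\theta/\theta=-\partial_\varphi G^N$; in particular the pole is a fixed point and trajectories neither reach nor cross it in finite time, so that ``approach to the pole along a fixed angular direction'' is preserved. Writing $\omega(t,\cdot)=\omega_0\circ\phi_t^{-1}$ and letting $\theta\to0$, a Gronwall comparison between the true flow and the angular flow of $\dot\varphi=2G^N$ shows that the trace exists for all $t$, stays in $W^{1,\infty}_m(\mathbb{S}^1)$ (the angular flow is bi-Lipschitz since $G^N\in W^{1,\infty}$), and is transported by $2G^N$. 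Hence $g^N$ solves $\partial_t g^N+2G^N\partial_\varphi g^N=0$ together with $(4+\partial_{\varphi\varphi})G^N=g^N$, which is \eqref{0 homogeneous limit system with initial data} forward in time with data $g_0^N$; weak-$*$ continuity in $t$ is inherited from $\omega$, and a standard continuation argument in $t$ (using uniqueness for \eqref{0 homogeneous limit system with initial data}) removes the provisional assumption that the trace existed at each fixed time. The south pole follows verbatim after setting $\rho=\pi-\theta$: the only change is $\partial_\theta=-\partial_\rho$, which flips the sign of $u_\varphi=\partial_\theta\psi\sim-2\rho\,G^S$, so the effective angular velocity is $-2G^S$ and the transport reads $\partial_t g^S-2G^S\partial_\varphi g^S=0$, i.e.\ $g^S$ solves \eqref{0 homogeneous limit system with initial data} backward in time with data $g_0^S$.

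I expect the main obstacle to be the first step. Since $\omega$ is only $L^\infty$, the velocity is a priori merely log-Lipschitz, so the delicate part is to control the remainder $\psi-\theta^2G^N$ strongly enough that its \emph{first derivatives}, which enter the velocity, genuinely vanish as $\theta\to0$ and uniformly enough to drive the Gronwall comparison for the flow; bounding the stream function itself is not sufficient. This is exactly where the assumed $W^{1,\infty}$ regularity of the pole traces is used (it makes $G^N$ of class $W^{3,\infty}$ and the angular flow bi-Lipschitz), and where one must carefully separate, in the spherical Green's function, the near-pole scale-invariant part of $\omega$ from its global part and from the $O(\theta^2)$ curvature corrections $\sin\theta-\theta$.
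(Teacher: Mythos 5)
Your overall strategy coincides with the paper's: extract the quadratic leading order of the stream function at each pole (the paper writes $\psi=\sin^2(\theta)\Psi$, which is your $\theta^2G^N$ plus remainder), observe that the advection speed $u_\varphi/\sin\theta=2\cos(\theta)\Psi+\sin(\theta)\partial_\theta\Psi$ tends to $2\Psi^N$ at the north pole and to $-2\Psi^S$ at the south pole (your sign flip via $\partial_\theta=-\partial_\rho$), and pass to the limit in the transport equation for $\tilde\omega^N:=\omega-g^N$. The identification of the resonant modes $\cos 2\varphi,\sin 2\varphi$ and of the role of $m\geq 3$ is also correct.

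There is, however, a genuine gap at precisely the step you flag as the main obstacle, and the resolution you propose for it is not the right one. The claim that the remainder contributes only $o(\theta^2)$ to $\psi$ --- equivalently, that the velocity generated by a bounded $m$-fold symmetric vorticity vanishing at the pole is $o(\theta)$ --- is not a consequence of the $W^{1,\infty}$ regularity of the traces $g_0^N,g_0^S$: the remainder $\omega-g^N$ is merely $L^\infty$ and carries no radial regularity, and the issue is nonlocal. A bounded vorticity supported far from the pole generically contributes a full $O(\theta)$ (not $o(\theta)$) to $u_\varphi$ near the pole, which would pollute the limiting angular velocity $2G^N$; what kills this contribution is a cancellation coming from the discrete symmetry, not from regularity. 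The paper closes this step with Lemma \ref{lem:boundness of BS}: after the stereographic change of variable $r=\cot(\theta/2)$, which turns the spherical Biot--Savart law into the planar one with a conformal weight, the $m$-fold symmetrized kernel satisfies $|K_m(x,y)|\leq C_m|x|^{m-1}/|y|^m$ for $|y|\geq 2|x|$ (Corollary 2.15 of \cite{EJ-symmetries}); splitting the integral at $|y|=\lambda|x|$ and sending $\lambda\to\infty$ yields $|u(x)|/|x|\to 0$ whenever the vorticity tends to $0$ at the origin, and this requires $m\geq 3$ for the far integral to converge. Without this estimate (or an equivalent decay bound for the symmetrized spherical Green's function) your Gronwall comparison between the true flow and the model angular flow $\dot\varphi=2G^N$ cannot be initiated, so this lemma must either be imported or reproved in your framework.
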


\begin{proof}
We start with the key Lemma. 
\begin{lemma}\label{lem:boundness of BS}
Consider $\omega\in L^\infty_m(\mathbb{S}^2)$ then $u=\nabla^\perp_{\mathbb{S}^2}\prt{\Delta_{\mathbb{S}^2}}^{-1}\omega$ is point-wise well defined, Log Lipschitz and 
\[
\abs{u(\theta,\varphi)}\leq C \abs{\sin(\theta)} \norm{\omega}_{L^\infty}.
\]
Moreover we have 
\begin{equation}\label{eq:cancellation at pole in symmetry}
\lim_{\theta\rightarrow 0}\omega(\theta,\varphi)=0\Longrightarrow \lim_{\theta\rightarrow 0} \frac{\abs{u(\theta,\varphi)}}{\sin(\theta)}=0,
\end{equation}
and the analogously at $\theta=\pi$.
\end{lemma}
\begin{proof}[Proof of Lemma \ref{lem:boundness of BS}]
The fact that $u$ is Log Lipschitz follows from the classical theory of Yudovich on bounded manifolds (see, for example, the work of Taylor \cite{Taylor}). Now considering the Biot-Savart Law in stereographic projection, that is we make the change of variable $r=\cot\prt{\frac{\theta}{2}}$, we get 
\[
\partial_\theta =-\frac{1}{2}\frac{1}{\sin\prt{\frac{\theta}{2}}^2}\partial_r=-\frac{1+r^2}{2}\partial_r \text{ and } \sin(\theta)=\frac{2r}{1+r^2}.
\]
The Biot-Savart law then becomes 
\[\underbrace{\frac{1}{r}\partial_r\prt{r\partial_r \psi}+\frac{1}{r^2}\partial_\varphi^2 \psi}_{:=\Delta_{\mathbb{R}^2}\psi}=\frac{4\omega}{(1+r^2)^2}\]
Now $\Delta_{\mathbb{R}^2}$ is exactly the Laplacian on $\mathbb{R}^2$ in polar coordinates in the variable $(r,\varphi)$ and the point-wise well definition and the estimate follow in verbatim from Lemma 2.10 in \cite{EJ-symmetries}. To get the more refined estimate \eqref{eq:cancellation at pole in symmetry} we note that using the stereo-graphic projection it suffice to prove the result on $\mathbb{R}^2$ with a bounded $m-$fold symmetric vorticity vanishing at the origin which plays the role of the north pole.  We recall from \cite{EJ-symmetries} that if $K$ denotes the Biot-Savart Kernel on $\mathbb{R}^2$
\[
K(x,y):=\frac{1}{2\pi}\frac{(x-y)^\perp}{\abs{x-y}^2}
\]
then for $m\geq 1$, the m-fold symmetrization in $y$ is defined by
\[
K_m(x,y)=\frac{1}{m}\sum^m_{i=1}K(x,O^i_{\frac{2\pi}{m}}y).
\]
Then by Corollary 2.15 in \cite{EJ-symmetries} for $\abs{y}\geq 2\abs{x}$ we have 
\[
K_m(x,y)\leq C_m \frac{\abs{x}^{m-1}}{\abs{y}^m}.
\]
We now write for $\lambda \in \mathbb{R}_+$
\[
u(x)=\int_{\mathbb{R}^2}K_m(x,y)\omega(y)dy=\underbrace{\int_{\abs{y}\leq \lambda \abs{x}}K_m(x,y)\omega(y)dy}_{(1)}+\underbrace{\int_{\abs{y}\geq \lambda \abs{x}}K_m(x,y)\omega(y)dy}_{(2)}.
\]
For $\epsilon \geq 0$ there exists $\delta$ such that for $\lambda \abs{x}\leq \delta$
\[
\abs{(1)}\leq \frac{1}{2\pi}\int_{\abs{y}\leq \lambda \abs{x}}\frac{1}{\abs{x-y}}w(y)dy\leq C \lambda \abs{x} \epsilon \text{ and }
\]
\[
\abs{(2)}\leq C_m \abs{x}^{m-1}\int_{\abs{y}\geq \lambda \abs{x}}\frac{1}{\abs{y}^m}dy\leq C_m\lambda^{2-m}\abs{x}.
\]
Thus for $\lambda \abs{x}\leq \delta$
\[
\frac{\abs{u(x)}}{\abs{x}}\leq C \lambda \epsilon+ C_m\lambda^{2-m},
\]
which gives the desired result by taking $\epsilon$ to $0$ and $\lambda$ to infinity.
\end{proof}
Now, with the Lemma in hand, the result is relatively straightforward. Indeed, we write 
\[\psi(\theta,\phi)=\sin^2(\theta)\Psi(\theta,\phi)\] and the equations transform from 
\[\partial_t \omega +\frac{1}{\sin(\theta)}(\partial_\theta\psi\partial_\phi\omega-\partial_\phi\psi\partial_\theta\omega)=0\]
to
\[\partial_t \omega +\frac{1}{\sin(\theta)}(\partial_\theta(\sin^2(\theta)\Psi)\partial_\phi\omega-\sin^2(\theta)\partial_\phi\Psi\partial_\theta\omega)=0\]
\[
\frac{1}{\sin(\theta)}\partial_\theta( \sin(\theta)\partial_\theta (\sin^2(\theta)\Psi))+\partial_{\phi}^2\Psi=\omega,\]
which simplify to 
\[\partial_t\omega +2\cos(\theta)\Psi\partial_\phi\omega+\sin(\theta)(\partial_\theta\Psi\partial_\phi\omega-\partial_\phi\Psi\partial_\theta\omega)=0,\]
\[ \sin^2(\theta)\partial_{\theta\theta}\Psi+3\sin(2\theta)\partial_\theta\Psi+4\cos(2\theta)\Psi +\partial_\phi^2\Psi =\omega.\] Formally taking the limit $\theta\rightarrow 0$ gives 
\[\partial_tg^N+2\Psi\partial_\phi g^N=0,\]
\[4\Psi^N+\partial_{\phi}^2\Psi^N=g^N,\] 
and formally taking the limit $\theta\rightarrow \pi$ gives
\[\partial_tg^S-2\Psi\partial_\phi g^S=0,\]
\[4\Psi^S+\partial_{\phi}^2\Psi^S=g^S.\] The justification of these limits (as in \cite{EJ-symmetries}) rests simply on writing the equation for  $\tilde\omega^N:=\omega-g^N$ and observing that, using Lemma \ref{lem:boundness of BS} satisfies
\[\lim_{\theta\rightarrow 0} \tilde\omega^N=0\] and similarly at the south pole. We leave the details to the interested reader.

\end{proof}

\section{Proof of Theorems \ref{R2Theorem} and \ref{S2Theorem}}
The proofs of these two theorems are exactly the same and rely upon Lemmas \ref{Koch} and \ref{LagrangianChaos} given below (both needing to be simply modified to establish Theorem \ref{S2Theorem}) and an application of the Baire Category Theorem. Thus we only give the full proof of Theorem \ref{R2Theorem}.

We first recall a result of Koch
\begin{lemma}[\cite{Koch}]\label{Koch}
Assume that $\omega\in L^1\cap C^2_m$ is a solution to the 2d Euler equation with Lagrangian flow $\Phi_\omega.$ For $T\geq 0,$ let $\mu(T)=\sup_{x}|\nabla \Phi_\omega(x,T)|.$ Then, for any $\epsilon>0$ and $T\geq 0,$ there exists a solution $v\in L^1\cap C^1_m$ with 
\[\norm{\omega_0-v_0}_{L^1\cap C^1}\leq \epsilon,\] while 
\[\norm{\omega-v}_{C^1}\geq \frac{\epsilon}{2} \mu(T).\]
\end{lemma}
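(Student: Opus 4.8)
The plan is to exploit the Lagrangian (transport) structure of the vorticity, $\omega(T,\Phi_\omega(x,T))=\omega_0(x)$, together with the fact that in two dimensions the flow map is area preserving, so that $\det\nabla\Phi_\omega=1$ and hence $\norm{[\nabla\Phi_\omega(x,T)]^{-1}}=\abs{\nabla\Phi_\omega(x,T)}$ at every point. First I would fix a small $\delta>0$ and, by definition of the supremum, choose a point $x_\ast\neq 0$ with $\abs{\nabla\Phi_\omega(x_\ast,T)}\geq(1-\delta)\mu(T)$, together with the unit direction $e$ realizing the largest singular value of $[\nabla\Phi_\omega(x_\ast,T)]^{-1}$. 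The idea is then to add to $\omega_0$ a perturbation whose gradient at $x_\ast$ points along $e$, so that transporting it to time $T$ amplifies that gradient by the factor $\norm{[\nabla\Phi_\omega]^{-1}}\approx\mu(T)$.

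The key construction is to take the perturbation concentrated in a \emph{small} ball. Let $q$ be a smooth bump supported in $B(x_\ast,\rho)$, $m$-fold symmetrized (placing identical rotated copies at $\mathcal{O}_m^{\,j}x_\ast$, which are disjoint for $\rho$ small since $x_\ast\neq0$), normalized so that $\norm{q}_{L^1\cap C^1}\leq\epsilon$ while $\abs{\nabla q}\sim\epsilon$ on an annulus inside $B(x_\ast,\rho)$ with $\nabla q$ aligned with $e$ there. Set $v_0=\omega_0+q\in L^1\cap C^1_m$, so that $\norm{v_0-\omega_0}_{L^1\cap C^1}\leq\epsilon$ automatically. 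The crucial point of localizing is that such a $q$ has amplitude $\sim\epsilon\rho$ and support of area $\sim\rho^2$, so its $L^1\cap L^\infty$ norm tends to $0$ as $\rho\to0$ even though $\abs{\nabla q}\sim\epsilon$ is held fixed.

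For the lower bound I would write, using transport for both solutions,
\[
v(T,y)-\omega(T,y)=q\big(\Phi_v^{-1}(y,T)\big)+\Big[\omega_0\big(\Phi_v^{-1}(y,T)\big)-\omega_0\big(\Phi_\omega^{-1}(y,T)\big)\Big].
\]
Evaluating the gradient of the first (main) term at $y_\ast=\Phi_\omega(x_\rho,T)$, where $x_\rho$ lies on the annulus on which $\nabla q$ is aligned with $e$, gives $\nabla q(x_\rho)\,[\nabla\Phi_v^{-1}(y_\ast,T)]\approx\epsilon\,\mu(T)$, using area preservation and near-alignment with the maximal singular direction. The second (error) term, together with the discrepancy between $\nabla\Phi_v^{-1}$ and $\nabla\Phi_\omega^{-1}$, is controlled by the flow difference $\norm{\Phi_v-\Phi_\omega}_{C^1([0,T])}$: since the vorticity perturbation is transported it stays $O(\norm{q}_{L^\infty})$ in amplitude and $O(\norm{q}_{L^1})$ in mass, so by the Biot--Savart law the velocity perturbation $u_v-u_\omega$, and hence (by a Gronwall/Log-Lipschitz estimate on $[0,T]$) the flow difference and its gradient, tend to $0$ as $\rho\to0$ for $\epsilon$, $T$ and $\omega$ held fixed. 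Choosing $\rho$ small enough that all error contributions fall below $\tfrac14\epsilon\mu(T)$ then yields $\norm{\omega-v}_{C^1}\geq\tfrac12\epsilon\mu(T)$, as claimed.

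The main obstacle is precisely this last error control: one must verify that the nonlinear corrections coming from $\Phi_v\neq\Phi_\omega$ are genuinely lower order than the \emph{fixed} amplification $\epsilon\mu(T)$. This is delicate because a generic order-$\epsilon$ perturbation would produce flow-gradient errors of comparable size $\epsilon\mu(T)$ (or worse, involving $\mu(T)^2$), which could cancel the main term; the localization trick --- shrinking the amplitude and support of $q$ while keeping $\abs{\nabla q}\sim\epsilon$ --- is exactly what forces the velocity perturbation, and therefore the flow and flow-gradient perturbations, to be small relative to $\epsilon\mu(T)$. Making the $C^1$ continuous-dependence estimate quantitative (controlling $\norm{\nabla\Phi_v-\nabla\Phi_\omega}_{L^\infty([0,T])}$ in terms of $\norm{q}_{L^1\cap L^\infty}$ and $\mu(T)$) is the technical heart of the argument, and is carried out in \cite{Koch}.
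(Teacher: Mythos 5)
The paper does not prove this lemma at all: it is quoted verbatim as a result of Koch and used as a black box, so there is no in-paper argument to compare yours against. Your sketch does capture the mechanism underlying Koch's theorem --- a perturbation localized near a point $x_\ast$ of (near-)maximal stretching, with $\abs{\nabla q}\sim\epsilon$ held fixed while the amplitude $\sim\epsilon\rho$ and support $\sim\rho$ shrink, so that transport amplifies $\nabla q$ by $\approx\mu(T)$ while the perturbation's influence on the flow vanishes as $\rho\to 0$. The $m$-fold symmetrization by disjoint rotated copies and the use of $\det\nabla\Phi_\omega=1$ to relate the singular values of $\nabla\Phi_\omega$ and $\nabla\Phi_\omega^{-1}$ are also the right ingredients.

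However, as a standalone proof your write-up has a genuine gap at exactly the step you defer to \cite{Koch}, and it is worth seeing why it is not routine. You need $\norm{\nabla\Phi_v-\nabla\Phi_\omega}_{L^\infty([0,T])}\to 0$ as $\rho\to 0$, which by Gronwall requires smallness of $\nabla(u_v-u_\omega)$; by Calder\'on--Zygmund this is controlled by a H\"older norm of the vorticity difference \emph{at time} $t$, not at time $0$. But the transported bump $q\circ\Phi_v^{-1}$ has Lipschitz constant $\sim\epsilon\,\norm{\nabla\Phi_v^{-1}}$, which is precisely the large quantity you are trying to produce --- so a naive estimate is circular. The resolution is that $T$ is fixed, hence $\norm{\nabla\Phi_v^{-1}}_{L^\infty([0,T])}\leq K(T)$ uniformly for perturbations of size $\leq\epsilon$, and interpolation gives $[q\circ\Phi_v^{-1}]_{C^\alpha}\lesssim (\epsilon\rho)^{1-\alpha}(\epsilon K(T))^{\alpha}\to 0$ as $\rho\to 0$; similarly the remaining piece $\omega_0\circ\Phi_v^{-1}-\omega_0\circ\Phi_\omega^{-1}$ and its gradient are controlled using \emph{two} derivatives of $\omega_0$ --- this is where the hypothesis $\omega\in C^2_m$, which your argument never invokes, actually enters. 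Without spelling out this quantitative continuous-dependence estimate (or explicitly citing the precise statement in Koch that supplies it), the claimed bound $\norm{\omega-v}_{C^1}\geq\tfrac{\epsilon}{2}\mu(T)$ does not follow from what you have written.
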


\begin{lemma}\label{LagrangianChaos}
Assume that $\omega_0\in L^1\cap C^1_{m}(\mathbb{R}^2)$ and that $\omega_0(0)\not=0.$ Then, \[\sup_{t}\norm{\nabla\Phi}_{L^\infty}=+\infty,\] while $|\nabla\Phi(0,t)|=1$ for all time. Moreover, the image of the $x$-axis under the flow $\Phi(\cdot,t)$ intersects any line passing through the origin at least $c_0\cdot t$ times as $t\rightarrow\infty,$ for some $c_0=c_0(\omega_0).$ 
\end{lemma}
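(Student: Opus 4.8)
The plan rests on two rigid structural facts about the origin. First, $m$-fold symmetry with $m\geq 3$ forces $u(0,t)=0$, so the origin is a fixed trajectory, and since vorticity is transported this gives $\omega(0,t)=\omega_0(0)$ for all $t$. Second, differentiating the symmetry relation $u(\mathcal{O}_m x,t)=\mathcal{O}_m u(x,t)$ at $x=0$ shows that $\nabla u(0,t)$ commutes with $\mathcal{O}_m$; for $m\geq 3$ the commutant of a non-trivial planar rotation consists exactly of matrices $aI+bJ$, where $J$ is the quarter-turn $Jx=x^\perp$, and incompressibility kills the trace, leaving $\nabla u(0,t)=\frac{\omega_0(0)}{2}J$. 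Since $\frac{d}{dt}\nabla\Phi(0,t)=\nabla u(0,t)\,\nabla\Phi(0,t)$ with the constant antisymmetric generator $\frac{\omega_0(0)}{2}J$, I get $\nabla\Phi(0,t)=\exp\prt{\tfrac{\omega_0(0)}{2}tJ}$, a rotation by angle $\frac{\omega_0(0)}{2}t$. This immediately yields $\abs{\nabla\Phi(0,t)}=1$ for all $t$, settling the second assertion, and it pins the infinitesimal rotation rate at the origin to the nonzero constant $\frac{\omega_0(0)}{2}$.

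For the winding statement I would fix $t$ and study the image of the positive $x$-axis, $s\mapsto\Phi\prt{(s,0),t}$ for $s>0$. Because distinct trajectories never collide and the origin is fixed, this curve avoids the origin, so its argument lifts to a continuous $w(s,t)$, normalized by $w(\cdot,0)\equiv 0$. The key is to read off its two spatial limits at fixed $t$. As $s\to 0^+$, $\Phi\prt{(s,0),t}=s\,\nabla\Phi(0,t)e_1+o(s)$, so the direction converges to that of $\nabla\Phi(0,t)e_1$ and, tracking continuously from $t=0$, $w(0^+,t)=\frac{\omega_0(0)}{2}t$. As $s\to\infty$, the uniform bound $\norm{u}_{L^\infty}\leq C$ (valid since $\omega_0\in L^1\cap L^\infty$) shows the trajectory of $(s,0)$ stays at radius $\geq s-Ct$ on $[0,t]$, so its angular velocity is $\leq C/(s-Ct)$ and $w(s,t)\to 0$. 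By the intermediate value theorem in $s$, $w(\cdot,t)$ attains every value between $0$ and $\frac{\omega_0(0)}{2}t$, so its image wraps around the circle at least $\big\lfloor\frac{\abs{\omega_0(0)}t}{4\pi}\big\rfloor$ times; hence the curve meets every ray, and a fortiori every line, through the origin at least $c_0 t$ times with $c_0=\frac{\abs{\omega_0(0)}}{4\pi}$.

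For the gradient blow-up I would argue by contradiction: suppose $\abs{\nabla\Phi(x,t)}\leq M$ for all $x,t$. Since $\det\nabla\Phi\equiv 1$, the inverse flow is also $M$-Lipschitz, so the image of each $(s,0)$ stays in the annulus $\{s/M\leq \abs{x}\leq Ms\}$. Choosing $s_1$ small and $s_2\geq MR$ large, the near point winds at rate $\approx\frac{\omega_0(0)}{2}$ — its image is confined near the origin, where $\abs{\omega(x,t)-\omega_0(0)}\leq C_M\abs{x}$ uniformly in $t$, because $\omega(\cdot,t)=\omega_0\circ\Phi^{-1}(\cdot,t)$ with $\Phi^{-1}$ $M$-Lipschitz — while the far point winds at rate $\leq C/R$. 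Their angular separation then grows linearly in $t$, so the image of the segment $[s_1,s_2]\times\{0\}$, which stays at radius $\geq s_1/M$, has length at least $c\,t$; but $\abs{\nabla\Phi}\leq M$ caps that length by $M(s_2-s_1)$, a contradiction for large $t$. Hence $\sup_t\norm{\nabla\Phi}_{L^\infty}=+\infty$.

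The main obstacle, shared by the last two steps, is controlling the near-origin rotation rate uniformly over the infinite time horizon: a priori vorticity could reorganize near the origin and corrupt the clean rate $\frac{\omega_0(0)}{2}$. The resolution is that the identities $\omega(0,t)=\omega_0(0)$ and $\nabla\Phi(0,t)=\exp\prt{\tfrac{\omega_0(0)}{2}tJ}$ are exact for all time, and — crucially — the winding count is extracted from the spatial limits $s\to 0^+$ and $s\to\infty$ at each fixed $t$, not from following one material point forever, which sidesteps radial drift entirely. Only the blow-up argument needs the uniform estimate $\abs{\omega(x,t)-\omega_0(0)}\leq C_M\abs{x}$, which is precisely what the contradiction hypothesis $\abs{\nabla\Phi}\leq M$ supplies. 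If an unconditional version of the near-origin angular dynamics is preferred, the scale-invariant structure of Proposition \ref{Full Euler link}, which identifies the radial limit of $\omega$ with the constant steady state $g\equiv\omega_0(0)$, can be invoked in its place.
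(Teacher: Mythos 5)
Your proof is correct and rests on the same core mechanism as the paper's: the origin is fixed with $\nabla u(0,t)=\frac{\omega_0(0)}{2}J$ exactly, so material near the origin winds at rate close to $\omega_0(0)/2$ while material far away winds at rate $o(1)$, and a uniform bound on $\nabla\Phi$ is incompatible with this differential rotation. Two genuine differences are worth recording. First, for the intersection count the paper simply cites the proof of Theorem 2 of \cite{EJSVPII}, whereas you give a self-contained argument via the continuous lift $w(s,t)$ of the argument of $\Phi((s,0),t)$ and the intermediate value theorem between the limits $w(0^+,t)=\frac{\omega_0(0)}{2}t$ and $w(s,t)\to 0$ as $s\to\infty$; your observation that the winding should be read off from spatial limits at fixed $t$, rather than by following one material point forever, is exactly the right way to avoid the radial-drift issue, and it makes this part unconditional. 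Second, to reach the contradiction the paper evolves a quarter-annulus and uses area preservation (Fubini) to force the images of two radial segments within distance $c/t$ of one another, whence $\norm{\nabla\Phi}_{L^\infty}\gtrsim t$; you instead bound the length of the image of a single radial segment from below by the inner radius times its angular variation, which grows like $t$, against the Lipschitz cap $M(s_2-s_1)$. Both are valid and yours is slightly more direct. The one step you should make explicit (the paper is equally terse here) is the passage from $\omega(0,t)=\omega_0(0)$ together with the uniform bound $\abs{\omega(x,t)-\omega_0(0)}\leq C_M\abs{x}$ to the velocity expansion $u(x,t)=\frac{\omega_0(0)}{2}x^\perp+o(\abs{x})$ uniformly in $t$: this is not automatic from the Biot--Savart law and requires the $m$-fold symmetric kernel decay $\abs{K_m(x,y)}\leq C_m\abs{x}^{m-1}/\abs{y}^m$ for $\abs{y}\geq 2\abs{x}$ (Corollary 2.15 of \cite{EJ-symmetries}, as used in Lemma \ref{lem:boundness of BS}).
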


\begin{proof}[Proof of Lemma \ref{LagrangianChaos}]
The proof of spiral formation in this case is actually contained in the proof of Theorem 2 in \cite{EJSVPII}, so we will only give the argument for the infinite growth of $\nabla \Phi.$
Assume without a loss of generality what $\omega_0(0)=1$ and that $|u(x,t)|_{L^\infty}\leq 1.$ Then, $\omega(0,t)=1$ for all $t\in\mathbb{R}.$ By the $m$-fold symmetry assumption and the assumption, we have that $\sup_t|\omega|_{C^1}<\infty$. We thus have that
\begin{equation}\label{velocityexpansion} u(x,t)=x^\perp + h(x,t),\end{equation} where
\[|h(x,t)|\leq M|x|^2,\] for a fixed constant $M>0$ and for all $(x,t)\in\mathbb{R}^2\times\mathbb{R}.$ Assume now, toward a contradiction, that \[|\nabla\Phi(x,t)|\leq K\] for all $(x,t)\in \mathbb{R}^2\times \mathbb{R}.$ Since $\det{\nabla\Phi}=1,$ we also have that $\norm{\nabla\Phi^{-1}}_{L^\infty}\leq K.$ Since $\Phi(0,t)=0$ for all time, it follows that  
\[\frac{1}{K}\leq \frac{|\Phi(x,t)|}{|x|}\leq K.\] It follows that \[|x|\leq \frac{1}{100 M K}\implies |\Phi(x,t)|\leq \frac{1}{100M},\] while
\[|x|\geq 100K\implies |\Phi(x,t)|\geq 100,\]for all $t\geq 0.$ Now, for each $x$ Define $\theta(x,t)$ to be the (unique continuous) angle function that $\Phi(x,t)$ makes with the $x_1$-axis.  Then, we have from \eqref{velocityexpansion} that if $|x|\leq \frac{1}{100MK},$ we have that 
\[|\frac{d}{dt}\theta(x,t)-1|\leq \frac{1}{100}.\] It thus follows that 
\begin{equation}\label{innerevolution}|x|\leq \frac{1}{100MK}\implies -2\pi+0.99t\leq \theta(x,t)\leq 1.01 t+2\pi,\end{equation}
and, similarly, because $|u|_{L^\infty}\leq 1,$
\begin{equation}\label{outerevolution}|x|\geq 100K\implies \theta(x,t) \leq 0.01 t+2\pi.\end{equation} 
Now consider the quarter of an annulus $A_0$ defined \emph{in polar coordinates} $(\theta,r)$ by:
\[A(0)= [0,\pi/2]\times [\frac{1}{100MK}, 100K] .\] Let $A(t)=\Phi(A(0),t).$ Note that the angle is defined on $\mathbb{R}$ (as the universal cover of $\mathbb{S}^1$) and that by \eqref{innerevolution} and \eqref{outerevolution}, we have that the evolution of the curve $[0,\pi/2]\times \{\frac{1}{100MK}\}$ is contained in $\theta\geq -2\pi+0.99t$ while the curve $[0,\pi/2]\times \{100K\}$ is contained in $\theta\leq 2\pi+0.01t.$ By Fubini's theorem, it follows that the distance between the image of the line $\Gamma_0:=\{0\}\times [\frac{1}{100MK}, 100K]$ and $\Gamma_{\pi/2}:=\{\pi/2\}\times [\frac{1}{100MK}, 100K]$ satisfies
\[\text{dist}(\Phi(\Gamma_0,t), \Phi(\Gamma_{\pi/2},t))\leq \frac{c}{t},\] for some $c=C(M,K).$ This implies $\norm{\nabla \Phi}_{L^\infty}\geq c t$ as $t\rightarrow\infty,$ which is a contradiction. 

\end{proof}
\begin{proof}[Proof of Theorem \ref{R2Theorem}]
For this argument, we are taking inspiration from the work of Z. Hani \cite{Hani} on the nonlinear Schr\"odinger equation. 
Let $S_t$ denote the solution map that sends an initial data to the corresponding Euler solution at time $t$. It is easy to show that $S_t:L^1\cap C^1_m\rightarrow L^1\cap C^1_m$ is continuous for all $t\in\mathbb{R}.$ Now, for each $N\in\mathbb{N},$ we define:
\[B_N:=\{\omega_0\in L^1\cap C^1_m(\mathbb{R}^2): \sup_{t\geq 0}\norm{S_t(\omega_0)}_{C^1}>N\}.\] By continuity of the solution map $S_t$ in $C^1,$ $B_N$ is an open subset of $L^1\cap C^{1}_m.$ We will now show that it is dense. Take some $\omega_0\in L^1\cap C^1_m$ and let $\epsilon>0.$ Then, there exists $\bar{\omega_0}\in B_{\epsilon/2}(\omega_0)$ belonging to $L^1\cap C^2_m$ and with $\bar\omega_0(0)\not=0.$ It follows from Lemma \ref{LagrangianChaos} that if $\Phi$ is the flow-map associated to $S_t(\omega_0),$
\[\sup_{t\geq 0}\norm{\nabla\Phi}_{L^\infty}=+\infty.\] By Koch's theorem, Lemma \ref{Koch}, it follows that there exists $v_0\in B_{\epsilon/2}$ and a $t\in [0,\infty)$ so that 
\[\norm{S_t(\bar\omega_0)-S_t(v_0)}_{C^1}\geq 4N.\] It follows that either $\bar{\omega_0}\in B_N\cap B_{\epsilon}(\omega_0)$ or $v_0\in B_N\cap B_\epsilon(\omega_0).$ It follows that $B_N$ is open and dense in $X$ for each $N\in\mathbb{N}$. The theorem now follows from the Baire Category Theorem. 
\end{proof}

\section*{Acknowledgements}
T.M. Elgindi and A.R. Said acknowledge funding from the NSF grants DMS-2043024 and
DMS-2124748. T.M. Elgindi also acknowledges funding from an Alfred P. Sloan fellowship. The authors thank T. Drivas for very helpful comments on a first draft of this work.

\appendix
\section{Some ODE results}
The following 3 lemmas come from \cite{Sobol49} where they were announced but not proven. As we could not get a hold of the subsequent article by the same author in which the proofs are said to have been published we give a proof to each lemma.
Consider the ordinary differential equation \begin{equation}
\label{YEqn} y''(t)=c(t)y(t),
\end{equation} where $c(t)>0$ is uniformly bounded.

\begin{lemma}\label{ODELemma1}
If $y$ is a positive solution to \eqref{YEqn}, then $\lim_{t\rightarrow\infty} y(t)$ exists. If the limit is finite, then $y(t)$ decreases to $y_\infty\in [0,\infty)$. 
\end{lemma}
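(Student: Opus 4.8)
The plan is to exploit the fact that positivity of both $c$ and $y$ forces $y$ to be convex, so that the monotonicity of $y'$ completely dictates the asymptotic behavior. First I would observe that, since $c(t)>0$ and $y(t)>0$, equation \eqref{YEqn} gives $y''(t)=c(t)y(t)>0$ for every $t$; hence $y$ is strictly convex and $y'$ is strictly increasing. Being monotone, $y'$ admits a limit $L:=\lim_{t\to\infty}y'(t)\in(-\infty,+\infty]$.

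The heart of the argument is then a trichotomy on the sign of $L$. If $L<0$, then $y'(t)<L<0$ for all $t$ (since $y'$ increases strictly up to $L$), and integrating forces $y(t)\to-\infty$, contradicting $y>0$; so this case is impossible. If $L>0$ (including $L=+\infty$), then $y'(t)\geq L/2>0$ for all large $t$, whence $y(t)\to+\infty$ and the limit exists as $+\infty$. If $L=0$, then since $y'$ increases strictly to $0$ we have $y'(t)<0$ for every finite $t$; thus $y$ is strictly decreasing, and being bounded below by $0$ it converges to a finite limit $y_\infty\in[0,\infty)$. This is precisely the decreasing-to-$y_\infty$ conclusion.

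Combining the cases shows $\lim_{t\to\infty}y(t)$ always exists in $(0,+\infty]$, which gives the first assertion. For the second, if this limit is finite then we can be in neither the case $L>0$ (which yields $+\infty$) nor $L<0$ (which is excluded), so necessarily $L=0$, and the third case gives that $y$ decreases to $y_\infty\in[0,\infty)$. I do not expect a genuine obstacle here; the only points worth stating carefully are the use of positivity of $y$ to rule out $L<0$, and the observation that a finite limit is incompatible with $L>0$, since together these two facts are exactly what pins down the monotone-decreasing conclusion.
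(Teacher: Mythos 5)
Your proof is correct and rests on the same mechanism as the paper's: positivity of $c$ and $y$ makes $y$ convex, so $y'$ is monotone and one splits according to whether $y'$ eventually becomes positive (forcing $y\to+\infty$ at least linearly) or stays negative (forcing $y$ to decrease to a finite limit), with positivity of $y$ excluding $y'\to L<0$. Your trichotomy on $L=\lim y'$ is just a slightly more explicit packaging of the paper's two-case argument.
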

\begin{proof}
Since $y''\geq 0$, it is either that $y'(t)<0$ for all $t$ or there exists $t_*$ after which $y'(t)>0$. In the latter case, $y'$ must be bounded from below so that $y\rightarrow \infty$ at least linearly.
\end{proof}

\begin{lemma}\label{ODELemma2}
Assume that $y$ solves \eqref{YEqn} and that $y$ decreases to $y_\infty\in [0,\infty)$. Then, 
\[\int_0^\infty |y'|=-\int_0^\infty y'=y_\infty-y(0).\] Furthermore, since $c(t)$ is uniformly bounded, we have that $y''$ is uniformly bounded. This implies that $y'(t)\rightarrow 0$. 
\end{lemma}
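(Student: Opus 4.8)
The plan is to handle the three assertions in turn, treating only the convergence $y'(t)\to 0$ as requiring any real idea.

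First, since $y$ decreases to $y_\infty$, we have $y'(t)\leq 0$ for all $t$, so that $|y'|=-y'$ pointwise. The fundamental theorem of calculus together with $y(t)\to y_\infty$ as $t\to\infty$ gives $\int_0^\infty y' = y_\infty - y(0)$, and hence $\int_0^\infty |y'| = -\int_0^\infty y' = y(0)-y_\infty<\infty$; in particular $y'\in L^1([0,\infty))$. This is the content of the first displayed identity (up to the obvious sign bookkeeping, using $y(0)\geq y_\infty$).

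Second, because $y$ is monotone with finite limits at both ends, it is bounded: $y_\infty \leq y(t)\leq y(0)$. Writing $|c(t)|\leq C$ for the uniform bound on $c$, equation \eqref{YEqn} yields $|y''(t)| = c(t)\,y(t)\leq C\,y(0)$, so $y''$ is uniformly bounded; consequently $y'$ is globally Lipschitz, hence uniformly continuous on $[0,\infty)$.

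For the final claim I would exploit the sign of $c$. Since $c>0$ and $y\geq 0$, we have $y''=c\,y\geq 0$, so $y'$ is non-decreasing; being also $\leq 0$, it must converge to some limit $L\in(-\infty,0]$ as $t\to\infty$. If $L<0$ then $\int_0^\infty y' = -\infty$, contradicting the finite value $y_\infty-y(0)$ computed above; therefore $L=0$, that is, $y'(t)\to 0$. Alternatively, one may invoke a Barbalat-type argument, combining $y'\in L^1$ with the uniform continuity of $y'$ established in the previous step. The only step with any subtlety is precisely this last one: integrability of $y'$ by itself does not force $y'\to 0$, and one must use either the monotonicity of $y'$ (coming from the sign of the forcing term $c\,y$) or the uniform continuity supplied by the bound on $y''$ — which is exactly where the hypothesis that $c$ is uniformly bounded enters.
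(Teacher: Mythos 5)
Your proof is correct. The paper gives no actual argument for this lemma (its ``proof'' reads only ``The proof is in the Lemma''), and your write-up supplies precisely the routine details the authors intended: the fundamental theorem of calculus for the integral identity, boundedness of $y''=c\,y$ from the bounds on $c$ and on the monotone $y$, and then $y'\to 0$ via either the monotonicity of $y'$ or a Barbalat-type use of $y'\in L^1$ with uniform continuity.
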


\begin{proof}
The proof is in the Lemma.
\end{proof}

\begin{lemma}\label{ODELemma3}
If $y_\infty>0$, then $\int_0^\infty t c(t) dt <\infty$. 
\end{lemma}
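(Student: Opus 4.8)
The plan is to reduce the claim about $\int_0^\infty t\,c(t)\,dt$ to a statement about $\int_0^\infty t\,y''(t)\,dt$, and then extract the latter from an elementary integration by parts together with the sign information already available. Since $y$ decreases to $y_\infty>0$, it is trapped in $[y_\infty, y(0)]$ with both endpoints strictly positive. Writing $c(t)=y''(t)/y(t)$ (valid since $y>0$) and using $y(t)\geq y_\infty$ gives the pointwise bound
\[
0\leq t\,c(t)=\frac{t\,y''(t)}{y(t)}\leq \frac{1}{y_\infty}\,t\,y''(t),
\]
so it suffices to prove $\int_0^\infty t\,y''(t)\,dt<\infty$. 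Note the integrand $t\,y''(t)$ is nonnegative because $t\geq 0$ and $y''=cy>0$.

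Next I would integrate by parts on a finite interval $[0,T]$, taking $u=t$ and $dv=y''\,dt$, which yields
\[
\int_0^T t\,y''(t)\,dt = T\,y'(T)-\bigl(y(T)-y(0)\bigr)=T\,y'(T)-y(T)+y(0).
\]
The point is now to control the right-hand side uniformly in $T$ using only monotonicity. Since $y$ is decreasing we have $y'(T)\leq 0$, hence $T\,y'(T)\leq 0$; and since $y(T)\geq y_\infty$ we have $-y(T)+y(0)\leq y(0)-y_\infty$. Combining these,
\[
\int_0^T t\,y''(t)\,dt \leq y(0)-y_\infty
\]
for every $T\geq 0$.

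Finally, because the integrand is nonnegative, $T\mapsto \int_0^T t\,y''(t)\,dt$ is monotone nondecreasing and bounded above by the fixed constant $y(0)-y_\infty$, so it converges as $T\to\infty$; that is, $\int_0^\infty t\,y''(t)\,dt<\infty$. Feeding this back through the first display gives $\int_0^\infty t\,c(t)\,dt\leq \frac{1}{y_\infty}\bigl(y(0)-y_\infty\bigr)<\infty$, as desired.

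I do not anticipate a serious obstacle here; the only point requiring care is the sign of the boundary term $T\,y'(T)$. A priori this term could threaten to grow, but the convexity/monotonicity already established (Lemmas \ref{ODELemma1} and \ref{ODELemma2}, giving $y'\leq 0$ with $y'\nearrow 0$) forces $T\,y'(T)\leq 0$, so it helps rather than hurts the upper bound. This is why no quantitative rate on $y'(T)\to 0$ is needed: the argument is driven entirely by monotonicity and the strict positivity of the limit $y_\infty$.
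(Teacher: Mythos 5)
Your proof is correct, and it takes a genuinely different and more elementary route than the paper. The paper follows the classical Riccati reduction: it substitutes $y=e^{z}$, sets $v=z'$ so that $v'=c-v^{2}$, establishes the quantitative decay $|v(t)|\leq M/t$ via a Chebyshev/pigeonhole selection of good times combined with the differential inequality $v'\geq -v^{2}$, and only then integrates $tv'=tc-tv^{2}$ to conclude. You bypass all of this by observing that $t\,c(t)\leq y_\infty^{-1}\,t\,y''(t)$ (using $y\geq y_\infty>0$) and integrating $t\,y''$ by parts, where the boundary term $T\,y'(T)$ has a favorable sign because $y'\leq 0$; monotone convergence of the partial integrals then finishes the argument, and in fact you need even less than you cite, since only $y'\leq 0$ is used and not $y'\to 0$. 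What your approach buys is brevity and transparency, with the explicit bound $\int_0^\infty t\,c(t)\,dt\leq (y(0)-y_\infty)/y_\infty$; what the paper's approach buys is the extra quantitative information $|y'(t)/y(t)|=O(1/t)$ (indeed $o(1/t)$), which the authors note but do not use elsewhere. Either proof is a valid substitute for the other in the context of this paper.
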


\begin{proof}

Write $y=e^z$. Then, 
\[z''=c(t)-z'(t)^2.\] Note that $z'e^z=y'$. Since $y_\infty>0$, we have that $z_\infty$ exists and $z'\rightarrow 0$. 
Observe that under our assumptions, we have that $z$ decreases to $z_\infty$ and that $z'<0$. Now let $z'=v$.  By the above, we have that $v\rightarrow 0$ as $t\rightarrow\infty$ and $v$ is integrable.  Then, \[v'=c(t)-v^2.\]
We will now show that there exists $M\geq 0$ so that \[|v(t)|\leq \frac{M}{t},\] for all $t$. Since $\int_0^\infty |v|<\infty,$ there exists $T_*\geq 1$ so that 
\[\int_{T_*}^\infty|v|<\frac{1}{1000}.\]
Now split $[T_*,\infty)$ into $I_k=[T_*k, T_*(k+1)]$ for $k\geq 1$. 
By Chebyshev's inequality, there exists $t_k\in I_k$ so that \[|v(t_k)|\leq \frac{1}{500t_k}.\] Next, observe that \[v'(t)\geq -v(t)^2.\] Thus, 
\[\frac{d}{dt}\Big(\frac{1}{|v|}\Big)\geq -1,\] where we just used $-v=|v|$. Thus, 
\[\frac{1}{|v(t)|}-\frac{1}{|v(s)|}\geq -(t-s).\] Therefore, 
\[|v(t)|\leq \frac{1}{\frac{1}{|v(s)|}-(t-s)}.\] Now set $s=t_k$ and take $t\in [t_k, t_k+2T_*].$ Then, we must have that
\[|v(t)|\leq \frac{1}{500t_k-2T_*}\leq \frac{1}{100t}.\] It follows that for all $t\geq 2T_*$ we have that \[|v(t)|\leq\frac{1}{100t}.\] In fact, it follows that $v=o(\frac{1}{t})$, but we do not need this. Now we simply observe that 
\[v't=tc(t)-tv^2.\] Now integrate both sides on $[0,T]$. 
Then we see that 
\[ \int_0^T tc(t)dt=\int_0^T v'(t)t +\int_0^T tv^2= v(T)T+\int_0^T v(t)dt +\int_0^Ttv^2\leq M\Big(1+2\int_0^\infty |v|\Big).\] This concludes the proof. 
\end{proof}


\begin{thebibliography}{99}

\bibitem{BedrossianMasmoudi}
Bedrossian, Jacob, and Nader Masmoudi. "Inviscid damping and the asymptotic stability of planar shear flows in the 2D Euler equations." Publications mathématiques de l'IHÉS 122.1 (2015): 195-300.

\bibitem{Denisov}
S.A. Denisov, "Infinite superlinear growth for the gradient for the two-dimensional Euler equation."
Contin. Dyn. Syst. A, 23(3):755–764, 2009.

\bibitem{ED-review}
Drivas, Theodore D., and Tarek M. Elgindi. "Singularity formation in the incompressible Euler equation in finite and infinite time." arXiv preprint arXiv:2203.17221 (2022).

\bibitem{EJ-symmetries}
Elgindi, Tarek M., and In‐Jee Jeong. "Symmetries and critical phenomena in fluids." Communications on Pure and Applied Mathematics 73.2 (2020): 257-316.

\bibitem{EJSVPI}
Elgindi, Tarek M., and In-Jee Jeong. "On singular vortex patches, I: Well-posedness issues." arXiv preprint arXiv:1903.00833 (2019). \emph{To appear in Memoirs of the American Mathematical Society.} 

\bibitem{EJSVPII}
Elgindi, Tarek, and In-Jee Jeong. "On singular vortex patches, II: long-time dynamics." Transactions of the American Mathematical Society 373.9 (2020): 6757-6775.

\bibitem{Hani}
Hani, Zaher. "Long-time instability and unbounded Sobolev orbits for some periodic nonlinear Schrödinger equations." Archive for Rational Mechanics and Analysis 211.3 (2014): 929-964.

\bibitem{IonescuJia1}
Ionescu, Alexandru, and Hao Jia. "Axi‐symmetrization near Point Vortex Solutions for the 2D Euler Equation." Communications on Pure and Applied Mathematics 75.4 (2022): 818-891.

\bibitem{IonescuJia2}
Ionescu, Alexandru D., and Hao Jia. "Nonlinear inviscid damping near monotonic shear flows." arXiv preprint arXiv:2001.03087 (2020).

\bibitem{Shnirelman-turvey}
B. Khesin, G. Misiolek and A. Shnirelman,
Geometric Hydrodynamics in Open Problems.
{\it} {	arXiv:2205.01143}.

\bibitem{KiselevSverak}
Kiselev, Alexander, and Vladimir Šverák. "Small scale creation for solutions of the incompressible two-dimensional Euler equation." Annals of mathematics 180.3 (2014): 1205-1220.

\bibitem{Koch}
Koch, Herbert. "Transport and instability for perfect fluids." Mathematische Annalen 323.3 (2002): 491-523.


\bibitem{MasmoudiZhao}
Masmoudi, Nader, and Weiren Zhao. "Nonlinear inviscid damping for a class of monotone shear flows in finite channel." arXiv preprint arXiv:2001.08564 (2020).

\bibitem{MSY}
Morgulis, Andrey, Alexander Shnirelman, and Victor Yudovich. "Loss of smoothness and inherent instability of 2D inviscid fluid flows." Communications in Partial Differential Equations 33.6 (2008): 943-968.


\bibitem{Nadirashvili}
Nadirashvili, Nikolai Semenovich. "Wandering solutions of Euler's D-2 equation." Functional Analysis and Its Applications 25.3 (1991): 220-221.

\bibitem{SarriaSaxton}
A. Sarria and R. Saxton.
``Blow-up of silutions to the generalized inviscid Proudman-Johnson equation''. {\it J. Math. Fluid Mech., }15(3):493-523, 2013.
\bibitem{Shnirelman}
Shnirelman, Alexander. "On the long time behavior of fluid flows." Procedia IUTAM 7 (2013): 151-160.

\bibitem{Sobol49}
I. M. Sobol',
"On Riccati equations and the reduction to them of linear equations of the second order." (Russian) {\it Doklady Akad. Nauk SSSR (N.S.)} 65, (1949), 257–278.

\bibitem{Sverakcours}
V.\v{S}ver\'ak, Course notes. \url{http://math.umn.edus\~\v{S}ver\'ak/course-notes2011}, 2011/2012.

\bibitem{Taylor}
M. Taylor,"Euler equation on a rotating surface." Journal of Functional Analysis 270.10 (2016): 3884-3945.

\bibitem{Yudovich}
Yudovich, V. I. "On the loss of smoothness of the solutions of the Euler equations and the inherent instability of flows of an ideal fluid." Chaos: An Interdisciplinary Journal of Nonlinear Science 10.3 (2000): 705-719.

\bibitem{Zlatos}
Zlatoš, Andrej. "Exponential growth of the vorticity gradient for the Euler equation on the torus." Advances in Mathematics 268 (2015): 396-403.

\end{thebibliography}
\end{document}